\documentclass[a4paper, headings=small, bibliography=totoc, 12pt]{article}
\usepackage[utf8]{inputenc}
\usepackage[USenglish]{babel}
\usepackage[a4paper, head = 14.5pt, headsep = 10pt, footskip=40pt, twoside = false,%
            textheight = 25cm, textwidth = 15.2cm]{geometry}
\usepackage[pdftex,svgnames]{xcolor}
\usepackage[colorlinks, allcolors=gray]{hyperref}%=blue
\hypersetup{linktocpage=true, pdfpagemode=FullScreen}
\usepackage{amsmath, amssymb, amsthm, mathtools, esint, aligned-overset, mathabx}
\theoremstyle{plain}
    \newtheorem{thm}{Theorem}[section]
    \newtheorem{lem}[thm]{Lemma}

\theoremstyle{definition}
    \newtheorem{defi}[thm]{Definition}
    
    \newtheorem{ex}[thm]{Example}
\theoremstyle{remark}
    \newtheorem{rem}[thm]{Remark}
\newcommand{\N}{\mathbb{N}}
\newcommand{\R}{\mathbb{R}}

\newcommand{\V}{\mathbb{V}}
\newcommand{\Vh}{\mathbb{V}_h}

\newcommand{\Acal}{\mathcal{A}}
\newcommand{\Fcal}{\mathcal{F}}
\newcommand{\Scal}{\mathcal{S}}
\newcommand{\Tcal}{\mathcal{T}}
\newcommand{\Vcal}{\mathcal{V}}

\renewcommand{\div}{\mathrm{div}}   % divergence
\newcommand{\dx}{\mathrm{\,d}x}     % dx
\newcommand{\ds}{\mathrm{\,d}s}     % ds

\DeclareMathOperator{\diam}{diam}   % diam
\DeclareMathOperator{\interior}{int}% interior

\DeclareMathOperator{\essinf}{\mathrm{ess\,inf}}    % essential infimum
\DeclareMathOperator{\linh}{span}   % linear hull

\newcommand{\CR}{\mathrm{CR}}       % Crouzeix-Raviart
\newcommand{\eCR}{\textup{eCR}}     % enriched Crouzeix-Raviart
\newcommand{\mCR}{\textup{mCR}}     % modified enriched Crouzeix-Raviart
\newcommand{\RT}{\mathrm{RT}}       % Raviart-Thomas
\newcommand{\nc}{\mathrm{nc}}       % nonconforming
\newcommand{\pw}{\mathrm{pw}}       % piecewise
\newcommand{\es}{\mathrm{es}}       % extra-stabilzed
\newcommand{\GUB}{\mathrm{GUB}}
\newcommand{\GLB}{\mathrm{GLB}}
\renewcommand{\mid}{\mathrm{mid}}

\def\Xint#1{\mathchoice
    {\XXint\displaystyle\textstyle{#1}}%
    {\XXint\textstyle\scriptstyle{#1}}%
    {\XXint\scriptstyle\scriptscriptstyle{#1}}%
    {\XXint\scriptscriptstyle\scriptscriptstyle{#1}}%
\!\int}
\def\XXint#1#2#3{{\setbox0=\hbox{$#1{#2#3}{\int}$ }
    \vcenter{\hbox{$#2#3$ }}\kern-.6\wd0}}

\def\dashint{\Xint-}

\newcommand{\Vvert}[1]{{\left\vert\kern-0.25ex\left\vert\kern-0.25ex\left\vert #1 
    \right\vert\kern-0.25ex\right\vert\kern-0.25ex\right\vert}}

\usepackage{multirow}
\makeatletter
\def\hlinewd#1{%
\noalign{\ifnum0=`}\fi\hrule \@height #1 %
\futurelet\reserved@a\@xhline}
\makeatother

\usepackage{caption}
\usepackage{subcaption}
\usepackage{placeins}

\title{Old and new Schr\"odinger eigenvalue localisation}
\author{Carsten Carstensen\thanks{Department of Mathematics, 
Humboldt-Universit\"at zu Berlin, Unter
 den Linden 6, 10099 Berlin, Germany (\texttt{cc@math.hu-berlin.de}, \texttt{stiebert@math.hu-berlin.de})}
\quad and \quad Tim Stiebert\footnotemark[1]}
\date{\vspace{-14mm}}
\providecommand{\keywords}[1]{\noindent\textbf{Keywords.} #1}
\begin{document}

\maketitle

\begin{abstract}
    \noindent
    Unconditional guaranteed lower and upper eigenvalue bounds are mandatory for the understanding
    of the Schr\"odinger eigenvalue spectrum and its spectral gaps. While upper eigenvalue bounds are
    naturally induced by conforming discretisations, guaranteed lower eigenvalue bounds (GLB) are
    less immediate. This paper clarifies the adaptation of nonconforming GLB from the harmonic eigenvalue
    problem and discusses their comparison for general and piecewise constant potentials. A fine-tuned
    extra-stabilised scheme is proposed and found superior in numerical comparisons. This new direct
    calculation of GLB is compatible with adaptive mesh-refinement and successfully circumvents the
    appearance of maximal mesh-size parameters in former GLB based on post-processing. Computational
    benchmarks also investigate guaranteed upper eigenvalue bounds (GUB) for two-sided eigenvalue
    control by conforming test functions associated to the underlying nonconforming computations.
    A numerical comparison with GUB from additional lowest-order conforming finite element schemes
    shows competitive accuracy with less computational cost.
\end{abstract}

\keywords{Schr\"odinger eigenvalue problem, lower eigenvalue bounds, eigenvalue localisation, nonconforming
finite element, extra-stabilisation}
\vspace*{-0.2cm}
\paragraph{MSC codes.} 65N12, 65N15, 65N25, 65N30

%%% 1. Introduction %%%
\section{Introduction} \label{sec:1}

%%% 1.1 Motivation %%%
\paragraph{Motivation.} \label{sec:1.1}

Hundred years of the Schr\"odinger eigenvalue problem (EVP) \cite{Sch26a} motivate a review
on guaranteed eigenvalue bounds for the lowest-order localisation of the spectrum. Given a
polyhedral bounded Lipschitz domain \(\Omega\subset\R^n\) in \(n \geq 2\) space dimensions
and some (trapping) potential \(V\), the Schr\"odinger eigenvalue problem seeks eigenpairs
\((\lambda,u)\) of \(-\Delta u + Vu = \lambda u\) in \(\Omega\) under homogeneous Dirichlet
boundary conditions. This is a fundamental model problem in quantum mechanics, an overview
over the spectrum is mandatory to determine the different energy levels of a quantum system.
More important in computational physics is that advanced (higher-order) schemes and estimators
require \emph{a-priori} knowledge about a spectral gap \cite{Gal14,Gal15,Gal17,CDMSV17,CDMSV18,%
CDMSV20, LV22}. While \emph{upper} eigenvalue bounds (GUB) follow immediately for conforming
finite element methods (FEM) from the Rayleigh-Ritz min-max principle \cite{BO91,Bof10}, the
computation of \emph{lower} eigenvalue bounds (GLB) is more involved, but indispensable for
two-sided control to identify spectral gaps.

\begin{table}[t]
	\hspace{-0.8cm}
	\begin{tabular}{l l c c c c}
		\hlinewd{1.5pt}
		method in EVP \eqref{eq:dSEVP} & \hspace{-2cm}GLB in Theorem
		& \hspace{-0.5cm}\(\Vh\) & \(P\) & \(Q\) & \(\gamma_h\) \\
		\hlinewd{1.5pt}
		\vspace*{-0.3cm} \\
		Crouzeix-Raviart (CR)           & \ref{thm:GLB_CR}  & \(\CR_0^1(\Tcal)\)  & \(\mathrm{id}\)
		& \(\mathrm{id}\) & \(\big(\sqrt{\varepsilon} + \sqrt{\delta/\lambda_\CR}\big)^2\lambda_\CR\)
		\medskip \\
		enriched Crouzeix-Raviart (eCR) & \ref{thm:GLB_eCR} & \(\eCR_0^1(\Tcal)\) & \(\mathrm{id}\)
		& \(\mathrm{id}\) & \(\displaystyle \delta' + \frac{\varepsilon'^2 \lambda_\eCR^2}{1 + \delta'
			+ \varepsilon' \lambda_\eCR}\) \medskip \\
		modified Crouzeix-Raviart (mCR) & \ref{thm:GLB_mCR} & \(\eCR_0^1(\Tcal)\) & \(\Pi_0\)
		& \(\mathrm{id}\) & \(\displaystyle \frac{\varepsilon' \varepsilon'' \lambda_\mCR^2}{1 +
			\varepsilon'' \lambda_\mCR}\) \medskip \\
		Raviart-Thomas (RT)             & \ref{thm:GLB_RT}  & \(\eCR_0^1(\Tcal)\) & \(\Pi_0\)
		& \(\Pi_0\) & \(\varepsilon' \, \lambda_\RT\) \\[5mm]
		extra-stabilised (sCR) in \eqref{eq:sCR} & \ref{thm:dGLB} & \eqref{eq:Ves}
		& \(\Pi_0\) & \(\mathrm{id}\) & \((\varepsilon''\lambda_\es-1)_+\) \smallskip \\
		\hlinewd{1.5pt}
	\end{tabular}
	\caption{Four Crouzeix-Raviart discretisations in \eqref{eq:dSEVP} and extra-stabilised scheme in
		\eqref{eq:sCR} with post-processing \(\gamma_h \) in \eqref{eq:glb} for parameters in \eqref{eq:params}.}
	\label{tab:1}
\end{table}

%%% Post-processed GLB %%%
\paragraph{Post-processed GLB.}
The first generation of \emph{guaranteed} lower eigenvalue bounds \cite{CG14} for the harmonic
eigenvalues relies on approximation properties of the nonconforming finite element interpolation \cite{%
CGR12}. It is highlighted in \cite{CZZ20,Liu15} that those guaranteed error bounds are
unconditional for higher eigenvalues: Given a discrete eigenvalue \(\lambda_h\) for an
exact eigenvalue \(\lambda\) of the same number \(k\in\N\) for a scheme and the post-%
processing \(\gamma_h\) from Table~\ref{tab:1}, we have
\begin{equation}
    \lambda_h/(1+ \gamma_h) \eqqcolon \GLB \leq \lambda.
    \label{eq:glb}
\end{equation}
It is marginal how big the maximal mesh-size \(h_{\max}\) of the underlying triangulation \(\Tcal
\) really is (although a finer mesh leads to a better bound), the assertion \(\GLB \leq \lambda\)
is guaranteed. This is a quantitative statement about the discretisation error and merely requires
exact solve of the algebraic eigenvalue problem, but is unconditional otherwise. The forthcoming
paper \cite{CS26+} extends the GLB for harmonic eigenvalues by conforming FEM \cite{LO13} to the
Schrödinger EVP \eqref{eq:SEVP}.

%%% Local mesh-refining %%%
\paragraph{Local mesh-refining.}
The Schr\"odinger eigenvalue problem highly depends on a prescribed potential \(V(x)\) and may exhibit 
localisation effects \cite{GN13} as illustrated for the Anderson localisation \cite{And58} in Figure
\ref{fig:groundStates}. The essence of this frequently observed phenomenon is that the eigenform of
interest is approximated very well by coarse meshes in parts of the domain, while a reasonable approximation 
enforces a fine mesh in other parts. This happens even for smooth or convex domains! The consequence is that
we encounter triangulations with highly different local mesh-sizes (e.g., measured in terms of the diameter
of the cells) and, moreover, we do \emph{not} want a uniform mesh. Then it is foreseen that \(h_{\max}\)
stays large and thereby ruins the efficiency of the valid GLB if local mesh-refining is mandatory: The
example of Subsection~\ref{sec:7.4}, for instance, provides a valid but useless \(\GLB \leq \lambda\).

%%% Direct GLB %%%
\paragraph{Direct GLB.}
The second generation of guaranteed error bounds \cite{CP23,CP24} circumvents this difficulty
by local mesh-parameters in extra-stabilisation terms. The fine-tuned parameters therein eventually lead
to new schemes that directly compute lower eigenvalue bounds! This has been explored for the harmonic
and biharmonic eigenvalues \cite{CP23,CP24} and is extended to the Schr\"odinger EVP in Section~\ref{sec:5}
of this paper.

%%% Contributions %%%
\paragraph{Contributions.}
The purpose of this  paper is threefold. First we adopt various GLB for the Schr\"odinger EVP and
thereby clarify the status quo on Schr\"odinger eigenvalue bounds in Section~\ref{sec:3}. Table~%
\ref{tab:1} displays four post-processings from nonconforming or mixed finite element calculations
that lead to GLB in \eqref{eq:glb} under various assumptions on the data. Many authors assume a
piecewise constant potential \(V \in P_0(\Tcal)\) \cite{arnold,AP19} and then particular bounds
simplify. We present a simultaneous proof in Section~\ref{sec:4} of the four post-processed GLB
of Section~\ref{sec:3}. Second, we establish the extra-stabilised EVP in Section~\ref{sec:5} for
it is compatible with local mesh-refining and leads to (empirical) optimal convergence rates in
benchmarks of Section~\ref{sec:7}. While we regard the mCR scheme from Table \ref{tab:1}
superior to sCR in the numerical examples in Section \ref{sec:7} on uniform meshes, the sCR scheme
appears more favourable for adaptive mesh-refinement for the Schr\"odinger eigenvalue localisation.
Third, we advertise and elaborate on the computation of GUB without direct use of conforming FEM via averaging \cite{CG14,HM14} in Section~\ref{sec:6}: Given \(k \in\N\) nonconforming eigenfunctions,
their conforming averages lead to a \(k \times k\) generalised algebraic eigenvalue problem with
\(k\)-th eigenvalue \(\mu_k \geq \lambda_k\) as GUB. An alternative method that allows for high
precision eigenvalue bounds is the Lehmann–Goerisch method, see \cite[Chapter~5]{Liu24} for harmonic
eigenvalues.

%%% Outline %%%
\paragraph{Outline.}
Section~\ref{sec:3} introduces the Schr\"odinger EVP and four nonconforming discretisations of
Table~\ref{tab:1} alongside necessary notation. Section~\ref{sec:3} corrects the recent GLB
\cite[Theorem 4.1]{Liu24} and compares it with \(\GLB_\CR\) presented in Section~\ref{sec:3}
beside other post-processed GLB from Table~\ref{tab:1}. Their comprehensive proofs follow in
in a unified frame in Section~\ref{sec:4}. The novel extra-stabilisation of enriched Crouzeix-Raviart
functions in Section~\ref{sec:5} allows for direct unconditional GLB even for piecewise constant
scalar diffusion. Section~\ref{sec:6} discusses simple and inexpensive GUB from averaging.
Computational benchmarks in Section~\ref{sec:7} compare the GLB of Table~\ref{tab:1} and GUB
from Section~\ref{sec:6} in praxi. Those comparisons provide striking empirical evidence for
the conjecture that the novel extra-stabilised EVP is the future low-order method of choice
for the Schr\"odinger EVP. The post-processed GUB from Section~\ref{sec:6} even match the accuracy
of direct GUB from conforming Courant EVP in all numerical experiments.

%%% 2. Prelimanieres and Notation %%%
\section{Preliminaries and Notation} \label{sec:2}

%%% Eigenvalue problem %%%
\subsection{Eigenvalue problem} \label{sec:1.2}
For the given potential \(V\in L^\infty(\Omega)\) we may and will assume without loss of
generality that \(\essinf_\Omega V = 0\) (otherwise shift the entire spectrum beforehand
by the essential infimum). The weak form of the Schr\"odinger eigenvalue problem seeks eigenpairs
\((\lambda,u)\in \R_+ \times H_0^1(\Omega)\) with \(L^2\)-normalized eigenstates \(\|u\|
\coloneqq \|u\|_{L^2(\Omega)} = 1\) such that
\begin{equation}
    a(u,v) + (u,v)_V = \lambda \, b(u,v) \qquad\textup{for all } v \in \V \coloneqq H_0^1(\Omega).
    \label{eq:SEVP}
\end{equation}
The energy scalar product \(a(\bullet,\bullet) \coloneqq (\nabla\bullet,\nabla\bullet)_\Omega\) and the
\(L^2\)-scalar product \(b(\bullet,\bullet) \coloneqq (\bullet,\bullet)_\Omega\) lead to Hilbert spaces
\((\V,a)\) and \((L^2(\Omega),b)\) in a Gelfand triple \(\V \hookrightarrow L^2(\Omega) \hookrightarrow
\V^\star\) with compact and dense embeddings. The semi-scalar product \((\bullet,\bullet)_V \coloneqq (
V\bullet,\bullet)_\Omega\) with induced semi-norm \(\|\bullet\|_V \coloneqq (\bullet,\bullet)_V^{1/2}\)
defines the norm \((\Vvert{\bullet}^2 + \|\bullet\|_V^2)^{1/2}\) that is equivalent to the energy norm
\(\Vvert{\bullet} \coloneqq a(\bullet,\bullet)^{1/2}\). The spectral theory for compact operators
\cite{kato} ensures countably many positive eigenvalues
\begin{equation}
    0 < \lambda_1 \leq \lambda_2 \leq \dots \leq \lambda_k \to \infty
    \qquad\textup{as } k \to \infty
    \label{eq:eVal}
\end{equation}
each of finite multiplicity. The eigenfunctions \((u_k)_{k\in\N}\) are \(a(\bullet,\bullet) + (\bullet,
\bullet)_V\) orthogonal and \(b\) orthonormal and the Rayleigh-Ritz min-max principle asserts that
the \(k\)-th eigenvalue \(\lambda_k\) and the set \(\Scal(k)\) of all \(k\)-dimensional subspaces of
\(\V\) satisfy
\begin{equation}
    \lambda_k = \min_{\V_k \in \Scal(k)} \max_{v \in \V_k \setminus \{0\}} \frac{a(v,v) + (v,v)_V}{b(v,v)}.
    \label{eq:minMax}
\end{equation}

%%% Discretisation %%%
\subsection{Discretisation} \label{sec:1.4}
Let \(\Tcal\) denote a regular triangulation of \(\Omega\) into non-degenerate
\(n\)-simplices (triangles in 2D). Four (out of five) competing discretisations
are written with a nonconforming test and ansatz space \(\Vh \subseteq \eCR_0^1
(\Tcal)\) as a subset of the piecewise quadratic enriched Crouzeix-Raviart
functions in \eqref{eq:eCR}. The resulting discrete problems seek algebraic
eigenpairs \((\lambda_h,u_h)\in \R_+ \times \Vh\) such that \(\|u_h\| = 1\) and
\begin{equation}
    a_\pw(u_h,v_h) + (Pu_h, Pv_h)_V = \lambda_h \, b(Qu_h, Qv_h)
    \qquad\textup{for all } v_h \in \Vh.
    \label{eq:dSEVP}
\end{equation}
The piecewise energy scalar product \(a_\pw(\bullet,\bullet) \coloneq \sum_{T
\in \Tcal} \int_T \nabla\bullet|_T \cdot \nabla\bullet|_T \dx\) is in fact a
scalar product on the finite dimensional spaces \(\Vh\) considered in this paper
\cite{BS08}. The linear operators \(P,Q\) in \eqref{eq:dSEVP} are either chosen
as the identity \(\mathrm{id}\) or as the \(L^2\) projection onto piecewise
constants \(P_0(\Tcal)\), which reads \(\Pi_0 v|_T \coloneqq \dashint_T v \dx
\coloneqq 1/\vert T\vert \int_T v \dx\). In all examples of this paper, the
coefficient matrix \(A \in \R^{N \times N}\) associated to the left-hand side
of \eqref{eq:dSEVP} is SPD for \(N = \dim\Vh\), while the symmetric matrix \(B
\in\R^{N \times N}\) for the right-hand side is SPD for \(Q = \mathrm{id}\). In
the remaining case \(Q = \Pi_0\) for \(\Vh = \eCR_0^1(\Tcal)\) the range of \(
Q\) is \(P_0(\Tcal)\) of dimension \(|\Tcal|= M < N = |\Tcal| + |\Fcal(\Omega)
|\) for the number of triangles \(|\Tcal|=M\) and the number of interior faces
\(|\Fcal(\Omega)|\). This results in \(M\) positive eigenvalues \(0 < \lambda_
{h(1)} \leq \dots \leq \lambda_{h(M)} < \infty\) and \(N-M = |\Fcal(\Omega)|\)
eigenvalues \(+\infty\) in the algebraic EVP \(Ax = \lambda Bx\). Table~\ref{tab:1}
provides an overview over the four nonconforming discretisations, their detailed
description follows in Section~\ref{sec:3}. The nonconforming interpolation
operators in Subsection~\ref{sec:4.1} give rise to a set of parameters in the
GLB of Table~\ref{tab:1}
\begin{equation}
    \begin{aligned}
    &\varepsilon    \coloneqq \|\kappa_\CR^2 h_\Tcal^2\|_\infty,
    &&\varepsilon'  \coloneqq \|C_P^2 h_\Tcal^2\|_\infty,
    &&\varepsilon'' \coloneqq \|\kappa_\eCR^2 h_\Tcal^2\|_\infty,
    &&\widetilde{\varepsilon} \coloneqq \|\kappa_h^2 h_\Tcal^2\|_\infty, \\
    &\delta \coloneqq \|\kappa_\CR^2 h_\Tcal^2 V\|_\infty,
    &&\delta' \coloneqq \|C_P^2 h_\Tcal^2 V\|_\infty,
    &&\widetilde{\delta} \coloneqq \|\kappa_h^2 h_\Tcal^2 V\|_\infty
    \end{aligned}
    \label{eq:params}
\end{equation}
with \(\kappa_h=\kappa_\CR\) if \(\Vh=\CR_0^1(\Tcal)\) and \(\kappa_h=\kappa_\eCR\)
if \(\Vh = \eCR_0^1(\Tcal)\) in \eqref{eq:dSEVP}. The Poincar\'e constant on convex
domains is \(C_P = 1/\pi\) \cite{PW60,Beb03} for \(n\geq 3\) in  \(\|f - \dashint_T
f \dx\|_{L^2(T)} \leq C_P h_\Tcal \Vvert{f}_T\) for \(f\in H^1(T)\) and \(T\in\Tcal
\). For a triangle \(T\), \(C_P = 1/j_{11}\) with the first positive root \(j_{11}=
3.8317059702\) of the Bessel function of the first kind \cite{LS10}, while \(C_P=1/
(\sqrt{2}\,\pi)\) for a right-isosceles triangle \cite{LS10}. For numerical bounds
on \(C_P\) on triangles of general shape see \cite{KL07}. The numerical examples in
Section~\ref{sec:7} with right-isosceles triangles run \(C_P = 1/(\sqrt{2}\pi)\).
Examples~\ref{ex:ICR} and~\ref{ex:IeCR} provide details on \(\kappa_\CR\) and \(\kappa_\eCR\).

%%% Notation %%%
\subsection{Notation}
Standard notation on Lebesgue and Sobolev spaces applies throughout this paper like
\(L^p(\Omega)\) with norm \(\|\bullet\|_p \coloneqq \|\bullet\|_{L^p(\Omega)}\) for
any \(1 \leq p \leq \infty\) and \((\bullet,\bullet)_\Omega \equiv (\bullet,\bullet
)_{L^2(\Omega)} \equiv \int_\Omega\bullet\bullet\dx\) is the \(L^2\) scalar product,
while \((\bullet,\bullet)_V \equiv(V\bullet,\bullet)_\Omega \equiv\int_\Omega\bullet
V\bullet\dx\) abbreviates the \(V\)-weighted semi-scalar product. The first-order
Sobolev space \(H^1(T)\) abbreviates \(H^1(\interior(T))\) for a compact simplex \(
T\) with non-void interior \(\interior(T)\). The vector space \(H^1(\Tcal)\coloneqq
\{v \in L^2(\Omega) : v|_T \in H^1(T) \textup{ for all } T \in\Tcal\}\) consists of
piecewise \(H^1\) functions with respect to some non-displayed triangulation \(\Tcal
\) of \(\Omega \subset \R^n\) into simplices. It is equipped with the semi-norm \(
\Vvert{\bullet}_\pw^2 \coloneqq a_\pw(\bullet,\bullet) = (\nabla_\pw\bullet,\nabla_
\pw\bullet)_\Omega\), where the piecewise gradient is also understood with respect
to \(\Tcal\). Let \(P_k(\Tcal) \coloneqq \{v \in L^2(\Omega) : v|_T \in P_k(T)
\textup{ for all } T \in \Tcal\}\) denote the set of piecewise polynomials of (total)
degree at most \(k \in\N_0\). The notation \(\vert\bullet\vert\) is context depending
and denotes either the Euclidean length of a vector, the cardinality of a finite set,
or the \(n\)-dimensional Lebesgue measure of a subset of \(\R^n\), but also the area
\(|F|\) of a face \(F\), or the length \(|E|\) of an edge \(E\). Let \(\Tcal\) denote
a shape-regular triangulation of \(\Omega\) into compact \(n\)-simplices \(T \in\Tcal
\) of positive diameter \(h_T = \diam(T) > 0\) with outer unit normal \(\nu_T\) on \(
\partial T\). Define the local mesh-size function \(h_\Tcal \in P_0(\Tcal)\) through
\(h_\Tcal|_T \coloneqq h_T\). For any \(T \in \Tcal\), let \(\Fcal(T)\) denote the set
of its \(n+1\) sides (edges in 2D) and \(\Vcal(T)\) the set of its \(n+1\) vertices.
Define the sets \(\Fcal \coloneqq \bigcup_{T\in\Tcal} \Fcal(T)\) respectively \(\Vcal
\coloneqq \bigcup_{T \in\Tcal} \Vcal(T)\) of all sides respectively all vertices. The
associated sets of all interior or boundary vertices (resp.\ interior or boundary sides)
are abbreviated by \(\Vcal(\Omega)\) or \(\Vcal(\partial\Omega)\) (resp.\ \(\Fcal(
\Omega)\) or \(\Fcal(\partial\Omega)\)). Assign each face \(F \in \Fcal\) with a unit
normal vector \(\nu_F\) of fixed orientation. On any interior face \(F \in \Fcal(T_+)
\cap \Fcal(T_-)\) shared by the simplices \(T_\pm\in\Tcal\), this induces a labelling
of \(T_\pm\) through \(\nu_F = \pm\nu_{T_\pm}|_F\). With this sign convention, the jump
\([v]_F\) of a piecewise Lipschitz continuous function \(v\) across \(F\) reads \([v]_F
(x) \coloneqq v|_{T_+}(x) - v|_{T_-}(x)\) at \(x \in F = \partial T_+ \cap \partial T_-
\). Owing to the homogeneous boundary conditions, the jump \([v]_F \coloneqq v(x)\) at
\(x \in F \in \Fcal(\partial\Omega)\) simplifies and \(\nu_F\) points outwards of the
domain \(\Omega\).

%%% Discretisation %%%
\section{Four discrete EVP and post-processed GLB} \label{sec:3}

This subsection displays the four methods in \eqref{eq:dSEVP} and the associated GLB
of Table~\ref{tab:1}, while Section~\ref{sec:4} gives the unified and comprehensive
proofs.

%%% CR %%%
\subsection{Crouzeix-Raviart} \label{sec:3.1}
The nonconforming Crouzeix-Raviart finite element \cite{CR73} consists of piecewise affine functions that
are continuous at all faces midpoints
\begin{equation}
    \CR_0^1(\Tcal) \coloneqq \big\{ v_\CR \in P_1(\Tcal) : [v_\CR]_F(\mid(F)) = 0 \textup{ for all } F
    \in \Fcal \big\}.
    \label{eq:CR}
\end{equation}
Recall the definition of the jumps on the boundary \(\partial\Omega\) reflects
homogeneous boundary conditions in the sense of Crouzeix-Raviart. The Crouzeix%
-Raviart EVP seeks \((\lambda_\CR,u_\CR) \in \R_+ \times \CR_0^1(\Tcal)\) with
\(\|u_\CR\| = 1\) such that \eqref{eq:dSEVP} holds with \(P =\mathrm{id}= Q\).
The discrete problem \eqref{eq:dSEVP} is a generalised algebraic EVP with SPD
coefficient matrices and leads to \(N = \dim \CR_0^1(\Tcal) = |\Fcal(\Omega)|
\) many positive eigenvalues \(0 < \lambda_{\CR(1)} \leq \dots \leq \lambda_{
\CR(N)}\), while we follow the convention \(\lambda_{\CR(N+1)} = \lambda_{\CR
(N+2)} = \dots = +\infty\). Recall the constants \(\varepsilon=\|\kappa_\CR^2 
h_\Tcal^2\|_\infty\) and \(\delta= \|\kappa_\CR^2 h_\Tcal^2 V\|_\infty\) from
\eqref{eq:params}.

\begin{thm}[GLB from CR] \label{thm:GLB_CR}
    The \(k\)-th exact eigenvalue \(\lambda =\lambda_k\) in \eqref{eq:SEVP} and
    the \(k\)-th Crouzeix-Raviart eigenvalue \(\lambda_\CR = \lambda_{\CR(k)}\)
    in \eqref{eq:dSEVP} with \(P = \mathrm{id} = Q\) and \(\Vh=\CR_0^1(\Tcal)\)
    for the same number \(k \in \N\)  satisfy
    \[
        \GLB_\CR \coloneqq \frac{\lambda_\CR}{1 + \big(\sqrt{\varepsilon} + \sqrt{\delta/\lambda_\CR} \big)^2
        \lambda_\CR} \leq \lambda.
    \]
\end{thm}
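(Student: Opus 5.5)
Take a minimiser \(\V_k=\linh\{u_1,\dots,u_k\}\) in the min-max principle \eqref{eq:minMax} and compare it with the \(k\)-dimensional space \(I_\CR\V_k\subset\CR_0^1(\Tcal)\), where \(I_\CR\) is the nonconforming interpolation of Subsection~\ref{sec:4.1}, \(\int_F I_\CR v\ds=\int_F v\ds\) for all \(F\in\Fcal\). Two properties of \(I_\CR\), from Example~\ref{ex:ICR}, are used: the orthogonality \(a_\pw(v-I_\CR v,w_\CR)=0\) for all \(w_\CR\in\CR_0^1(\Tcal)\), which gives \(\Vvert{I_\CR v}_\pw^2+\Vvert{v-I_\CR v}_\pw^2=\Vvert{v}^2\), and the local approximation property \(\|v-I_\CR v\|_{L^2(T)}\le \kappa_\CR h_T\Vvert{v-I_\CR v}_{\pw,T}\).

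\textbf{Step 1: bound the discrete Rayleigh quotient by the exact data.} For \(v\in\V_k\) with \(\|v\|=1\), write \(e\coloneqq v-I_\CR v\). Then \(\|e\|\le\sqrt{\varepsilon}\,\Vvert{e}_\pw\) and \(\|e\|_V\le\sqrt{\delta}\,\Vvert{e}_\pw\), where the second estimate is taken elementwise with \(V\le\|V\|_{L^\infty(T)}\), so that \(\delta\) carries the local weights. The numerator satisfies \(\Vvert{I_\CR v}_\pw^2+\|I_\CR v\|_V^2\le \Vvert{v}^2-\Vvert{e}_\pw^2+(\|v\|_V+\|e\|_V)^2\). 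The aim is to absorb the cross term \(2\|v\|_V\|e\|_V\) together with the loss in the denominator, \(\|I_\CR v\|\ge 1-\sqrt{\varepsilon}\,\Vvert{e}_\pw\). A clean route is to bound the denominator by \(\|I_\CR v\|\ge 1-\sqrt\varepsilon\, t\) with \(t\coloneqq\Vvert{e}_\pw\), and the numerator by \(\lambda-t^2+2\sqrt{\lambda}\sqrt{\delta}\,t+\delta t^2\). Here \(\|v\|_V^2\le\lambda\), since the maximal Rayleigh quotient on \(\V_k\) is \(\lambda\), and \(\Vvert{v}^2+\|v\|_V^2\le\lambda\).

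\textbf{Step 2: apply the discrete min-max.} The discrete min-max principle on \(I_\CR\V_k\) gives \(\lambda_\CR\le\max_t R(t)\). This is legitimate once \(\dim I_\CR\V_k=k\), which I would argue by contradiction: if \(I_\CR v=0\) for some \(v\ne 0\), the estimates of Step 1 force a contradiction whenever the claimed bound is nontrivial. Otherwise \(\GLB_\CR\le\lambda\) follows directly, since \(\lambda_\CR/(1+\gamma_h)\le\lambda\) is then immediate.

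\textbf{Step 3: the main obstacle.} The hard part is the calculus step that converts \(\lambda_\CR(1-\sqrt\varepsilon t)^2\le\lambda-t^2+2\sqrt{\lambda\delta}\,t+\delta t^2\) into exactly \(\lambda_\CR\le\lambda\bigl(1+(\sqrt\varepsilon+\sqrt{\delta/\lambda_\CR})^2\lambda_\CR\bigr)\). I would proceed as follows. Write \(\lambda_\CR-\lambda\le t^2(\lambda_\CR\varepsilon-1)+2t\sqrt{\lambda_\CR}\bigl(\sqrt{\lambda_\CR\varepsilon}+\sqrt{\lambda\delta/\lambda_\CR}\bigr)\), possibly after dropping \(\delta t^2\) by a sharper Pythagoras in the \(V\)-weighted inner product. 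Then maximise the quadratic in \(t\), using Young's inequality \(2ab\le a^2+b^2\) with \(a=t\) and \(b\) the bracket. This yields \(\lambda_\CR-\lambda\le\lambda_\CR\bigl(\sqrt{\varepsilon\lambda_\CR}+\sqrt{\delta\lambda/\lambda_\CR}\bigr)^2\). Replacing \(\lambda\) by \(\lambda_\CR\) inside the square is harmless in the relevant case \(\lambda\le\lambda_\CR\); otherwise the claim is trivial. Rearranging then gives \(\GLB_\CR\le\lambda\). If the constants do not match exactly, I would reorganise the estimate as \(\|v\|\le\|I_\CR v\|+\sqrt\varepsilon\,t\) and use the triangle inequality on square roots of the Rayleigh quotients. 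This is what produces the squared sum \((\sqrt\varepsilon+\sqrt{\delta/\lambda_\CR})^2\) in the statement.
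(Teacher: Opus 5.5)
Your overall frame is the paper's: Case~1 when $\dim I_\CR\V_k<k$, the discrete min--max on $I_\CR\V_k$, and (I1)--(I2). The main line of Steps~1 and~3 does not reach the stated bound, however, for two reasons.

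\textbf{Step~1 loses the coupling.} You bound the cross term by $\|v\|_V\le\sqrt\lambda$, separately from $\Vvert{v}^2+\|v\|_V^2\le\lambda$. This discards the coupling $\|v\|_V^2\le\lambda-\Vvert{v}^2\le\lambda-t^2$, where the last step is the Pythagoras identity from (I1), and the loss is fatal. In the limit $\varepsilon\to0$ your inequality reads $\lambda_\CR\le\lambda-(1-\delta)t^2+2\sqrt{\lambda\delta}\,t$. Its maximum is attained at $t=\sqrt{\lambda\delta}/(1-\delta)\le\sqrt\lambda$ (e.g.\ for $\delta=1/4$), so it only yields $\lambda\ge(1-\delta)\lambda_\CR$. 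The theorem asserts $\lambda\ge\lambda_\CR/(1+\delta)$. No ``sharper Pythagoras in the $V$-weighted inner product'' is available to remove $\delta t^2$: $I_\CR$ is orthogonal only for $a_\pw$. Switching to the $a_\pw+(\bullet,\bullet)_V$-orthogonal $\mathcal{I}$ of Remark~\ref{rem:interpolation} sacrifices (I2) with $\kappa_\CR$ and leads to the weaker $\GLB_\mu$.

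\textbf{Step~3 has a sign error and the wrong prefactor.} The coefficient of $t^2$ is $-(1+\varepsilon\lambda_\CR-\delta)$, not $\varepsilon\lambda_\CR-1$. Even granting your estimate $\lambda_\CR-\lambda\le\lambda_\CR\gamma_h$ with $\gamma_h=(\sqrt{\varepsilon\lambda_\CR}+\sqrt\delta)^2$, rearranging gives only $\lambda\ge(1-\gamma_h)\lambda_\CR$. This is strictly weaker than $\lambda\ge\lambda_\CR/(1+\gamma_h)$ and void for $\gamma_h\ge1$. The claim needs $\lambda_\CR-\lambda\le\lambda\gamma_h$, and since $\lambda\le\lambda_\CR$ in the relevant case, the prefactor $\lambda_\CR$ cannot be traded for $\lambda$.

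\textbf{How the coupling is kept.} The paper starts from $\lambda_\CR\|Iu\|^2+\Vvert{u-Iu}_\pw^2\le\lambda+\|Iu\|_V^2-\|u\|_V^2$. Lemma~\ref{lem:4.7} bounds the last difference with a Young parameter $s$ and $\|u\|_V^2\le\lambda-\Vvert{u-Iu}_\pw^2$. A lower bound $\|Iu\|^2\ge(1-\tau)-(\tau^{-1}-1)\varepsilon\Vvert{u-Iu}_\pw^2$, with $\tau$ chosen to cancel $\Vvert{u-Iu}_\pw^2$, leaves $\lambda\ge\lambda_\CR/(1+\delta/s+\varepsilon\lambda_\CR/(1-s))$, which is then optimised in $s$.

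Your closing idea with square roots can be turned into a complete and shorter proof, but only if the coupling is kept. With $t=\Vvert{v-I_\CR v}_\pw$, use (I1), $\|I_\CR v\|_V\le\|v\|_V+\sqrt\delta\,t$, and Minkowski in $\R^2$:
\[
\sqrt{\lambda_\CR}\le\sqrt{\lambda_\CR}\,\|I_\CR v\|+\sqrt{\varepsilon\lambda_\CR}\,t\le\bigl(\Vvert{v}^2-t^2+\|v\|_V^2\bigr)^{1/2}+\bigl(\sqrt\delta+\sqrt{\varepsilon\lambda_\CR}\bigr)t\le\sqrt{\lambda-t^2}+\bigl(\sqrt\delta+\sqrt{\varepsilon\lambda_\CR}\bigr)t .
\]
Cauchy--Schwarz bounds the right-hand side by $\sqrt\lambda\,\bigl(1+(\sqrt\delta+\sqrt{\varepsilon\lambda_\CR})^2\bigr)^{1/2}$. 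This is exactly $\GLB_\CR\le\lambda$, and it needs no sign condition on $1-\sqrt\varepsilon t$. Applied to your decoupled Step~1 bound, the same trick fails.

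Finally, make Case~1 explicit: $I_\CR v=0$ and $\|v\|=1$ give $1\le\varepsilon\Vvert{v}^2\le\varepsilon\lambda$, and $\GLB_\CR\le\lambda_\CR/(1+\varepsilon\lambda_\CR)\le1/\varepsilon$.
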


\noindent
The proof of Theorem~\ref{thm:GLB_CR} concludes in \eqref{eq:End_CR} in Subsection
\ref{sec:4.5} below with \(\gamma_h = \big(\sqrt{\varepsilon}+\sqrt{\delta/\lambda
_\CR} \big)^2 \lambda_\CR\) in Table~\ref{tab:1}. We compare with a recent result
in \cite[Theorem 4.1]{Liu24} and find its corrected version below is inferior to the
GLB of Theorem~\ref{thm:GLB_CR}.

\begin{thm}[GLB after \cite{Liu24}] \label{thm:GLB_Liu}
    The \(k\)-th exact eigenvalue \(\lambda= \lambda_k\) in \eqref{eq:SEVP} and
    the \(k\)-th Crouzeix-Raviart eigenvalue \(\lambda_\CR = \lambda_{\CR(k)}\)
    in \eqref{eq:dSEVP} with \(P = \mathrm{id} = Q\) and \(\Vh=\CR_0^1(\Tcal)\)
    for the same number \(k \in \N\) and the first Crouzeix-Raviart eigenvalue
    \(\mu =\lambda_{\CR(1)}\) in \eqref{eq:dSEVP} satisfy
    \[
        \GLB_\mu \coloneqq \frac{\lambda_\CR}{1 + \big(\sqrt{\varepsilon} + \sqrt{\delta/\mu} \big)^2
        \lambda_\CR} \leq \lambda.
    \]
\end{thm}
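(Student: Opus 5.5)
The plan is to derive Theorem~\ref{thm:GLB_Liu} as a direct corollary of Theorem~\ref{thm:GLB_CR}, which we may assume. No new discrete analysis is needed: the only extra ingredient is the ordering of the Crouzeix-Raviart eigenvalues, \(\mu = \lambda_{\CR(1)} \leq \lambda_{\CR(k)} = \lambda_\CR\), which follows from the monotone enumeration in Subsection~\ref{sec:3.1}. All discrete eigenvalues are positive because the left-hand side of \eqref{eq:dSEVP} with \(P=\mathrm{id}=Q\) is SPD.

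The key step compares the two denominators. Since \(\delta = \|\kappa_\CR^2 h_\Tcal^2 V\|_\infty \geq 0\) and \(0<\mu\leq\lambda_\CR\), we get \(\delta/\lambda_\CR \leq \delta/\mu\). Hence
\[
  \big(\sqrt{\varepsilon}+\sqrt{\delta/\lambda_\CR}\big)^2 \leq \big(\sqrt{\varepsilon}+\sqrt{\delta/\mu}\big)^2 .
\]
Multiplying by \(\lambda_\CR>0\) and adding \(1\) shows that the denominator of \(\GLB_\mu\) is at least the denominator of \(\GLB_\CR\). Both denominators are positive and the numerators coincide, so \(\GLB_\mu \leq \GLB_\CR\). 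Theorem~\ref{thm:GLB_CR} then gives \(\GLB_\CR\leq\lambda\), and therefore \(\GLB_\mu \leq \lambda\).

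I expect no real obstacle; the only point to check is the degenerate case of Theorem~\ref{thm:GLB_CR}. If \(k > N = \dim\CR_0^1(\Tcal)\), the convention \(\lambda_{\CR(k)}=+\infty\) applies. In that case I would interpret both bounds as their limit as \(\lambda_\CR\to\infty\). Since \(\mu\) is fixed and finite, this limit is \(1/(\sqrt{\varepsilon}+\sqrt{\delta/\mu})^2\), which is at most \(\lim_{\lambda_\CR\to\infty}\GLB_\CR = 1/\varepsilon\). So the comparison persists in the limit. Alternatively, one can restrict the statement to \(k\leq N\), as is implicit in the paper.

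The argument also explains the paper's remark that the corrected GLB of \cite{Liu24} is inferior to Theorem~\ref{thm:GLB_CR}: the comparison above is strict whenever \(\delta>0\) and \(\mu<\lambda_\CR\).
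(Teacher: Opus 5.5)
Your proposal is correct and is essentially the paper's proof. The paper likewise uses \(\mu = \lambda_{\CR(1)} \leq \lambda_{\CR(k)}\) to get \(\GLB_\mu \leq \GLB_\CR\) and then concludes from Theorem~\ref{thm:GLB_CR}; your treatment of the limiting case \(\lambda_{\CR(k)} = +\infty\) is a harmless addition that the paper does not spell out.
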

\begin{proof}
    Since \(\mu \equiv \lambda_{\CR(1)} \leq \lambda_{\CR(k)}\) for any \(k\in
    \N\) with equality for \(k= 1\), we have \(\GLB_\mu(1) = \GLB_\CR(1)\) and
    \(\GLB_\mu(k) \leq \GLB_\CR(k)\) for any \(k\geq 2\). Hence \(\GLB_\CR\leq
    \lambda\) in Theorem~\ref{thm:GLB_CR} concludes the proof. The
    book \cite{Liu24} presents a different approach that we revisit in Remark
   ~\ref{rem:interpolation} below. However, the proof of Lemma 4.2 in \cite{Liu24}
    displays a miscalculation: The first inequality on page 63, omits a factor
    \(\sqrt{c}\). The corrected lemma leads to \(\kappa_h = \varepsilon^{1/2}
    +\delta^{1/2}\mu^{-1}\) in Remark~\ref{rem:interpolation} (with different
    notation in \cite{Liu24}) and Theorem~\ref{thm:GLB_Liu} displays the correct
    version of \cite[Theorem 4.1]{Liu24}.
\end{proof}

%%% eCR %%%
\subsection{Enriched Crouzeix-Raviart} \label{sec:3.2}
An enrichment by quadratic bubble-functions allows an additional degree of freedom and leads to an
nonconforming interpolation operator with exact interpolation of the piecewise integral mean in the
enriched Crouzeix-Raviart space \cite{HHL14,HM15}; see also \cite{AB85,AC95,Che93} for the
equivalence of mixed and nonconforming FEM. Define for each simplex \(T\in\Tcal\) the nonconforming
quadratic bubble-function known from \cite{FS83} and the Marini identity \cite{Mar85,BC05} by
\begin{equation}
    \flat_T(x) \coloneqq
    \frac{n+2}{2} - \frac{n(n+1)^2(n+2)}{\sum_{P \neq Q \in \Vcal(T)} |P-Q|^2}
    \, |x-\mid(T)|^2
    \label{eq:bubble_def}
\end{equation}
at \(x \in T \in \Tcal\) and extend it by zero outside \(T\). Their linear hull
\[
    B(\Tcal) \coloneqq \linh\{v \in P_2(\Tcal) : v|_T \in \linh\{\flat_T\}
    \textup{ for all } T \in \Tcal\}
\]
defines the enriched Crouzeix-Raviart (eCR) space
\begin{equation}
    \eCR_0^1(\Tcal)\coloneqq \CR_0^1(\Tcal) \oplus B(\Tcal).
    \label{eq:eCR}
\end{equation}
The eCR EVP seeks \((\lambda_\eCR,u_\eCR)\in \R_+ \times \eCR_0^1(\Tcal)\) with
\(\|u_\eCR\| = 1\) such that \eqref{eq:dSEVP} holds with \(P =\mathrm{id}= Q\).
The discrete problem \eqref{eq:dSEVP} is a generalised algebraic EVP with SPD
coefficient matrices and hence has \(N = \dim\eCR_0^1(\Tcal) = |\Fcal(\Omega)|
+ |\Tcal|\) many positive eigenvalues \(0 < \lambda_{\eCR(1)} \leq \dots \leq
\lambda_{\eCR(N)}\). Recall the constants \(\varepsilon' = \|C_P^2 h_\Tcal^2\|_ \infty\) and \(\delta' = \|C_P^2 h_\Tcal^2 V\|_\infty\) from \eqref{eq:params}.

\begin{thm}[GLB from eCR] \label{thm:GLB_eCR_1}
    The \(k\)-th exact eigenvalue \(\lambda=\lambda_k\) in \eqref{eq:SEVP} and the
    \(k\)-th enriched Crouzeix-Raviart eigenvalue \(\lambda_\eCR = \lambda_{\eCR(k
    )}\) in \eqref{eq:dSEVP} with \(P =\mathrm{id}= Q\) and \(\Vh = \eCR_0^1(\Tcal )\), for the same number \(k\in\N\), and \(\zeta(s)\coloneqq 1 + \frac{\delta'
    }{s} - \delta' - s\), for \(0 < s < 1\), satisfy
    \begin{equation*}
        \GLB_{\eCR}^{(s)} \coloneqq \max_{0<s<1} \frac{\lambda_\eCR}{1 + \frac{\delta'}{s} +\frac{\varepsilon
        ^{\prime 2} \lambda_\eCR^2}{\zeta(s) + \varepsilon' \lambda_\eCR}} \leq \lambda.
    \end{equation*}
\end{thm}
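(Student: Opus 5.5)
We derive the bound from the min-max principle \eqref{eq:minMax}, applied to the discrete problem with test functions obtained by interpolating exact eigenfunctions. The tool is the enriched Crouzeix-Raviart interpolation \(I_\eCR : \V \to \eCR_0^1(\Tcal)\) of Subsection~\ref{sec:4.1}. It preserves integral means over faces and over simplices, so \(\Pi_0 I_\eCR v = \Pi_0 v\) and \(a_\pw(v - I_\eCR v, w_h) = 0\) for all \(w_h \in \eCR_0^1(\Tcal)\). Consequently \(\Vvert{I_\eCR v}_\pw^2 + \Vvert{v-I_\eCR v}_\pw^2 = \Vvert{v}^2\).

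Fix the first \(k\) exact eigenfunctions and let \(u\) range over \(\linh\{u_1,\dots,u_k\}\) with \(\|u\|=1\), so that \(\Vvert{u}^2 + \|u\|_V^2 \le \lambda\). Abbreviate \(e \coloneqq u - I_\eCR u\) and \(t \coloneqq \Vvert{e}_\pw^2\). Since \(e\) has vanishing integral means on each \(T\), the elementwise Poincar\'e inequality gives \(\|e\| \le C_P\|h_\Tcal \nabla_\pw e\|\), hence \(\|e\|^2 \le \varepsilon' t\) and \(\|e\|_V^2 \le \delta' t\).

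We then bound the two sides of the discrete Rayleigh quotient of \(I_\eCR u\).
\begin{enumerate}
\item \emph{Numerator.} Expand \(\|I_\eCR u\|_V^2 = \|u-e\|_V^2\) with a weighted Young inequality \(2(u,e)_V \le s^{-1}\ldots\) and the parameter \(0<s<1\); this is the origin of the \(\delta'/s\) term and of \(\zeta(s)\). Then
\[
a_\pw(I_\eCR u, I_\eCR u) + \|I_\eCR u\|_V^2 \le \lambda - t + (\text{terms in } \delta', s, t).
\]
\item \emph{Denominator.} Use that \(e\) is \(L^2\)-orthogonal to \(P_0(\Tcal)\) and \(u-\Pi_0u\) is controlled similarly. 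Write \(\|I_\eCR u\|^2 = 1 - 2b(u,e) + \|e\|^2\) and bound \(b(u,e) = b(u-\Pi_0 u, e)\) by Cauchy-Schwarz, using \(\|e\| \le \sqrt{\varepsilon' t}\) and \(\|u - \Pi_0u\| \le \sqrt{\varepsilon'}\,\Vvert{u}\). This gives a lower bound of the form \(1 - 2\sqrt{\varepsilon' t}\,\sqrt{\varepsilon'\lambda}\).
\end{enumerate}
The min-max principle \eqref{eq:minMax} on the \(k\)-dimensional space \(I_\eCR\linh\{u_1,\dots,u_k\}\) then yields
\[
\lambda_\eCR \le \max_u \frac{\text{numerator}(t)}{\text{denominator}(t)}.
\]
This space is \(k\)-dimensional provided the denominator stays positive; otherwise the bound is trivial, as discussed below.

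The remaining step is elementary. Maximise the resulting quotient over \(t \ge 0\) (note that \(t\) is unknown), or equivalently rearrange the inequality into a quadratic in \(\sqrt t\) and eliminate \(t\). We expect this optimisation to produce exactly
\[
\lambda_\eCR \le \lambda\Big(1 + \frac{\delta'}{s} + \frac{\varepsilon'^2 \lambda_\eCR^2}{\zeta(s) + \varepsilon'\lambda_\eCR}\Big).
\]
The plan is to complete the square in \(\sqrt t\) and take the supremum. Since \(s\in(0,1)\) is arbitrary, taking the maximum over \(s\) finishes the proof.

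\textbf{Main obstacle.} The bookkeeping in the numerator is delicate. The cross term \((u,e)_V\) and the piecewise energy must combine so that \(-t\) absorbs the positive contributions, with coefficient \(\zeta(s) = 1 + \delta'/s - \delta' - s\) appearing after the optimisation. The exact placement of \(s\) in the Young inequalities will be adjusted to reproduce this form, possibly after first replacing \(\lambda\) by \(\lambda_\eCR\) where monotonicity allows.

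Finally, the degenerate case must be handled. If the denominator is nonpositive, or \(I_\eCR\) is not injective on the eigenspace, then the relevant inequality forces \(\lambda\) to be large relative to \(\lambda_\eCR\). In that case the asserted lower bound holds trivially.
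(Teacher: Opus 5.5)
Your architecture matches the paper's: the discrete min-max principle on $I_{\eCR}\linh\{u_1,\dots,u_k\}$, the Pythagoras identity from (I1), a weighted Young inequality with parameter $s$ for the potential term, (I3) plus Poincar\'e for $\|I_{\eCR}u\|^2$, and elimination of the unknown $t=\Vvert{e}_{\pw}^2$. However, the denominator bound you state discards exactly the term the theorem needs. Combine $\|I_{\eCR}u\|^2\ge 1-2\varepsilon'\sqrt{\lambda t}$ with the numerator bound $(1+\delta'/s)\lambda-\zeta(s)\,t$ and complete the square in $\sqrt t$. This gives only
\[
\lambda_{\eCR}\le\Big(1+\frac{\delta'}{s}\Big)\lambda+2\varepsilon'\lambda_{\eCR}\sqrt{\lambda t}-\zeta(s)\,t\le\Big(1+\frac{\delta'}{s}+\frac{\varepsilon^{\prime 2}\lambda_{\eCR}^2}{\zeta(s)}\Big)\lambda ,
\]
with $\zeta(s)$ instead of $\zeta(s)+\varepsilon'\lambda_{\eCR}$ in the last denominator. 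That is strictly weaker than the claim, so the optimisation you expect does not reproduce the theorem from the inequalities you wrote.

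The missing $\varepsilon'\lambda_{\eCR}$ is precisely the dropped $\|e\|^2$ in $\|I_{\eCR}u\|^2=1-2b(u,e)+\|e\|^2$, so keep it. Since $t\le\Vvert{u}^2\le\lambda$, you have $\|e\|\le\sqrt{\varepsilon' t}\le\sqrt{\varepsilon'\lambda}$. The map $y\mapsto 1-2\sqrt{\varepsilon'\lambda}\,y+y^2$ decreases on $[0,\sqrt{\varepsilon'\lambda}]$, whence
\[
\|I_{\eCR}u\|^2\ge 1-2\sqrt{\varepsilon'\lambda}\,\|e\|+\|e\|^2\ge 1-2\varepsilon'\sqrt{\lambda t}+\varepsilon' t .
\]
Then $\lambda_{\eCR}\le(1+\delta'/s)\lambda+2\varepsilon'\lambda_{\eCR}\sqrt{\lambda t}-(\zeta(s)+\varepsilon'\lambda_{\eCR})\,t$, and the maximum over $\sqrt t\ge 0$ is exactly the asserted bound. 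The paper does the equivalent with $\|I_{\eCR}u\|^2\ge 1-\tau\varepsilon'\lambda-(\tau^{-1}-1)\|e\|^2$ for $0<\tau<1$ and $\|e\|^2\le\varepsilon' t$. It chooses $\tau=\varepsilon'\lambda_{\eCR}/(\zeta(s)+\varepsilon'\lambda_{\eCR})$, which cancels the coefficient of $t$. The $-1$ in $\tau^{-1}-1$ is your discarded $\|e\|^2$.

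Two further steps must be made precise. In the numerator, bound the Young term $\frac{\delta'}{s}\|u\|_V^2$ by $\frac{\delta'}{s}(\lambda-\Vvert{u}^2)\le\frac{\delta'}{s}(\lambda-t)$. This is what puts $+\delta'/s$ into $\zeta(s)=(1-s)(1+\delta'/s)$; using only $\|u\|_V^2\le\lambda$ gives the coefficient $1-\delta'-s$ of $t$, which may be negative.

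In the degenerate case, do not split on the sign of the denominator bound. The inequality $\lambda_{\eCR}\|I_{\eCR}u\|^2\le(1+\delta'/s)\lambda-\zeta(s)t$ and the lower bound for $\|I_{\eCR}u\|^2$ combine without any sign condition. Moreover, a nonpositive lower bound only yields $\lambda\ge 1/(2\varepsilon')$, which does not make the claim trivial: for $\delta'=0$, $\varepsilon'\lambda_{\eCR}=1$ and $s\to 0$ the asserted bound tends to $2/(3\varepsilon')$. The correct split is whether $I_{\eCR}$ is injective on $\linh\{u_1,\dots,u_k\}$. If it is not, some normalised $u$ there has $I_{\eCR}u=0$, so $1=\|e\|^2\le\varepsilon'\Vvert{u}^2\le\varepsilon'\lambda$. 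You must still check that the bound is below $1/\varepsilon'$. This follows from $x-x^2/(\zeta(s)+x)=x\zeta(s)/(\zeta(s)+x)<\zeta(s)\le 1+\delta'/s$ with $x=\varepsilon'\lambda_{\eCR}$.
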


\noindent
Equation \eqref{eq:End_eCR} concludes the proof of Theorem~\ref{thm:GLB_eCR_1} in
Subsection~\ref{sec:4.5} below. The optimal parameter \(s\) is the solution of a
one-dimensional minimization problem. For a piecewise constant potential \(V \in
P_0(\Tcal)\), however, the lower bound in Theorem~\ref{thm:GLB_eCR_1} simplifies
miraculously.

\begin{thm}[GLB from eCR for \(V \in P_0(\Tcal)\)] \label{thm:GLB_eCR}
    The \(k\)-th exact eigenvalue \(\lambda = \lambda_k\) in \eqref{eq:SEVP} and the
    \(k\)-th enriched Crouzeix-Raviart eigenvalue \(\lambda_\eCR = \lambda_{\eCR(k)}
    \) in \eqref{eq:dSEVP} with \(P = \mathrm{id} = Q\) and \(\Vh= \eCR_0^1(\Tcal)\) for the same number \(k \in \N\) satisfy for a piecewise constant potential \(V
    \in P_0(\Tcal)\) that
    \[
        \GLB_\eCR \coloneqq \frac{\lambda_\eCR}{1 + \delta' + \frac{\varepsilon^{\prime 2} \lambda_\eCR^2}{
        1 + \delta' + \varepsilon' \lambda_\eCR}} \leq \lambda.
    \]
\end{thm}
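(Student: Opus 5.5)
The plan is to repeat the argument behind Theorem~\ref{thm:GLB_eCR_1}, but use the extra structure of \(V\in P_0(\Tcal)\) so that the auxiliary parameter \(s\) is not needed. The main tool is the eCR interpolation \(I_\eCR:\V\to\eCR_0^1(\Tcal)\) of Subsection~\ref{sec:4.1}. It has three properties we will use. It is the Galerkin projection for the piecewise Laplacian, so \(a_\pw(v-I_\eCR v,w_h)=0\) for all \(w_h\in\eCR_0^1(\Tcal)\). It preserves integral means, so \(\Pi_0 I_\eCR v=\Pi_0 v\). And by the Poincar\'e inequality with \(C_P\) on each simplex, \(\|v-I_\eCR v\|_{L^2(T)}\le C_P h_T\Vvert{v-I_\eCR v}_T\).

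For \(k\) exact eigenfunctions \(u_1,\dots,u_k\), consider the span \(W_k\) of the interpolants \(I_\eCR u_j\). If \(\dim W_k=k\), the discrete min-max principle applied to \eqref{eq:dSEVP} gives
\[\lambda_\eCR\le\max_{v\in\linh\{u_1,\dots,u_k\}}\frac{\Vvert{I_\eCR v}_\pw^2+\|I_\eCR v\|_V^2}{\|I_\eCR v\|^2}.\]
Fix such a \(v\) and normalise it by \(\Vvert{v}^2+\|v\|_V^2=\lambda\) and \(\|v\|=1\); this is possible because the \(u_j\) are orthogonal in both inner products. Write \(e=v-I_\eCR v\) and \(t^2=\Vvert{e}_\pw^2\). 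By Pythagoras, \(\Vvert{I_\eCR v}_\pw^2=\Vvert{v}^2-t^2\).

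The \(V\)-term is where \(V\in P_0(\Tcal)\) helps. Since \(V\) is piecewise constant and \(\Pi_0 e=0\), we have \((v,e)_V=(V(v-\Pi_0v),e)\), and this gives
\[\|I_\eCR v\|_V^2=\|v\|_V^2-2(v,e)_V+\|e\|_V^2\le\|v\|_V^2-\|e\|_V^2+2\|e\|_V\|v-\Pi_0v\|_V .\]
A cruder option is to bound \(\|e\|_V^2\le\delta' t^2\) and absorb the cross term with Young's inequality. The \(s\) in Theorem~\ref{thm:GLB_eCR_1} comes from exactly this Young step, which is needed for general \(V\). For \(V\in P_0\), \(\|e\|_V^2\le\delta' t^2\) still holds. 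The key point is that \(\|I_\eCR v\|_V\le\|v\|_V\) is not automatic; I would instead combine everything into the single estimate
\[\Vvert{I_\eCR v}_\pw^2+\|I_\eCR v\|_V^2\le\lambda-t^2+\delta' t^2 \ \text{(up to the cross term)}.\]

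For the denominator, \(\|I_\eCR v\|\ge 1-\|e\|\ge 1-\sqrt{\varepsilon'}\,t\). I expect a sharper form: \(\|I_\eCR v\|^2=1-2(v,e)+\|e\|^2\), and \((v,e)=(v-\Pi_0v,e)\) has to be controlled through \(\|v-\Pi_0v\|\le C_Ph\Vvert{v}\) together with the Galerkin orthogonality identity \((\nabla v,\nabla_\pw e)=t^2\). Inserting these bounds gives an inequality of the form
\[\lambda_\eCR\le\frac{\lambda(1+\delta')-\,g(t)}{1-\varepsilon'(\dots)},\]
which we then maximise over \(t\ge0\). This is a scalar quadratic optimisation. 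Its maximiser should produce exactly the denominator \(1+\delta'+\varepsilon'\lambda_\eCR\), and solving for \(\lambda\) then yields \(\GLB_\eCR\le\lambda\).

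The case \(\dim W_k<k\) is handled by the usual contradiction: some \(v\neq 0\) would have \(I_\eCR v=0\), hence \(e=v\), which contradicts the derived bound whenever \(\GLB_\eCR>\lambda\).

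The main obstacle is the precise handling of the \(V\)-cross term \((v,e)_V\) with \(V\) piecewise constant, so that no Young parameter \(s\) appears and the \(\delta'\) terms combine cleanly. I would first attempt this as a quadratic inequality in the variables \((t,\|e\|_V)\). If that fails, I would take the optimal \(s\) in Theorem~\ref{thm:GLB_eCR_1} and show that for \(V\in P_0\) the term \(\delta'/s\) can be replaced by \(\delta'\), since the \(s\)-Young step is superfluous there.
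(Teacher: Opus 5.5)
\textbf{Verdict: there is a genuine gap.} The $V$-term is left unresolved, and the route you sketch for it does not lead to the stated bound.

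Your frame is sound and essentially the paper's. You apply min-max on $\linh\{I_{\eCR}u_1,\dots,I_{\eCR}u_k\}$. You use the lower bound $\|I_{\eCR}v\|^2=1-2(v-\Pi_0v,e)+\|e\|^2\ge1+r^2-2\sqrt{\varepsilon'\lambda}\,r$ for $r=\|e\|$. And you optimise over the error size instead of over a Young parameter, which is equivalent.

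The gap is exactly where this theorem differs from Theorem~\ref{thm:GLB_eCR_1}, and you leave it open (``up to the cross term''). Your displayed inequality also does not follow from the identity in front of it. That identity only gives $\|I_{\eCR}v\|_V^2\le\|v\|_V^2+\|e\|_V^2+2\|e\|_V\|v-\Pi_0v\|_V$, with $+\|e\|_V^2$. The cross term $2(v-\Pi_0v,e)_V$ is precisely what forces the parameter $s$. Suppose you close it by Cauchy--Schwarz, $|2(v-\Pi_0v,e)_V|\le2\delta't\Vvert{v}$, and optimise. You then get a different inequality; for $\delta'\le1/2$ it reads $\lambda_{\eCR}\le\lambda\bigl(1+(\delta'+\varepsilon'\lambda_{\eCR})^2/(1-\delta'+\varepsilon'\lambda_{\eCR})\bigr)$. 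This is weaker than the statement, for instance when $\varepsilon'\lambda_{\eCR}=1$ and $\delta'>0$. So the claim that the maximiser ``should produce exactly'' $1+\delta'+\varepsilon'\lambda_{\eCR}$ is unsupported, and $\lambda-t^2+\delta't^2$ is not the right intermediate target.

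The missing idea (Lemma~\ref{lem:4.8}) is to split $I_{\eCR}v$ itself, not $e$, into mean and fluctuation. On each $T\in\Tcal$, (I3) and the Poincar\'e inequality give
\[
\|I_{\eCR}v\|_{L^2(T)}^2=\|\Pi_0v\|_{L^2(T)}^2+\|(1-\Pi_0)I_{\eCR}v\|_{L^2(T)}^2\le\|v\|_{L^2(T)}^2+C_P^2h_T^2\Vvert{I_{\eCR}v}_T^2 .
\]
Weighting with the constant $V|_T\ge0$ yields $\|I_{\eCR}v\|_V^2\le\|v\|_V^2+\delta'\Vvert{I_{\eCR}v}_{\pw}^2$, with no free parameter. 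Equivalently, expand around $I_{\eCR}v$: $\|I_{\eCR}v\|_V^2=\|v\|_V^2-\|e\|_V^2-2((1-\Pi_0)I_{\eCR}v,e)_V$, and Young with weight one cancels $\|e\|_V^2$. Your display becomes correct once $(1-\Pi_0)I_{\eCR}v$ replaces $v-\Pi_0v$.

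Then (I1) and $\Vvert{v}^2+\|v\|_V^2\le\lambda$ give $\Vvert{I_{\eCR}v}_{\pw}^2+\|I_{\eCR}v\|_V^2\le(1+\delta')\Vvert{I_{\eCR}v}_{\pw}^2+\|v\|_V^2\le(1+\delta')(\lambda-t^2)$. So $\delta'$ now multiplies $-t^2$. With $r\le\sqrt{\varepsilon'}\,t$ this becomes $\lambda_{\eCR}+\bigl(\lambda_{\eCR}+(1+\delta')/\varepsilon'\bigr)r^2-2\lambda_{\eCR}\sqrt{\varepsilon'\lambda}\,r\le(1+\delta')\lambda$. Minimising the left-hand side over $r\in\R$ gives exactly $\GLB_{\eCR}\le\lambda$.

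Two smaller repairs are needed.
\begin{itemize}
\item The normalisation can only be $\Vvert{v}^2+\|v\|_V^2\le\lambda$, not $=\lambda$; the inequality suffices.
\item In the degenerate case there is no derived bound to contradict, since the min-max step is unavailable. There $I_{\eCR}v=0$ gives $1=\|e\|^2\le\varepsilon'\Vvert{v}^2\le\varepsilon'\lambda$. You must still verify $\GLB_{\eCR}\le1/\varepsilon'$. This holds because, with $x=\varepsilon'\lambda_{\eCR}$ and $D=1+\delta'$, $\varepsilon'\GLB_{\eCR}=x(D+x)/(D^2+Dx+x^2)\le1$.
\end{itemize}
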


\noindent
Equation \eqref{eq:End_mCR} concludes the proof of Theorem~\ref{thm:GLB_eCR} in
Subsection~\ref{sec:4.5} below with \(\gamma_h=\delta'+\frac{\varepsilon^{\prime
2}\lambda_\eCR^2}{1 + \delta' + \varepsilon'\lambda_\eCR}\) in Table~\ref{tab:1}.

%%% mCR %%%
\subsection{Raviart-Thomas} \label{sec:3.3}
The Marini identity \cite[Theorem 3.3]{HM15} provides for a piecewise constant
potential \(V\in P_0(\Tcal)\) the equivalence of the Raviart-Thomas (RT) mixed
finite element method \cite{RT77} and the following nonconforming discretization:
Seek \((\lambda_\RT,u_\nc) \in \R _+ \times \eCR_0^1(\Tcal)\) with \(\|u_\nc\|
= 1\) such that \eqref{eq:dSEVP} holds with \(P = \Pi_0 = Q\) and \(\Vh = \eCR
_0^1(\Tcal)\). The discrete problem \eqref{eq:dSEVP} is a generalised algebraic
EVP with \(M = |\Tcal|\) positive eigenvalues \(0 < \lambda_{\RT(1)} \leq \dots
\leq \lambda_{\RT(M)} < \infty\) and \(|\Fcal(\Omega)|\) infinity eigenvalues.
The framework in \cite[Corollary 5.1]{Gal23} provides the following GLB for the
mixed RTEVP with \(\gamma_h = \varepsilon' \lambda_\RT\) in Table~\ref{tab:1}.

\begin{thm}[GLB from RT for \(V \in P_0(\Tcal)\)] \label{thm:GLB_RT}
    The \(k\)-th exact eigenvalue \(\lambda = \lambda_k\) in \eqref{eq:SEVP} an
    the \(k\)-th Raviart-Thomas eigenvalue \(\lambda_\RT= \lambda_{\RT(k)}\) in \eqref{eq:dSEVP} with \(P = \Pi_0 = Q\) and \(\Vh=\eCR_0^1(\Tcal)\) for the
    same number \(k \in \N\) satisfy for a piecewise constant potential \(V \in
    P_0(\Tcal)\) that
    \[
        \GLB_\RT \coloneqq \frac{\lambda_\RT}{1 + \varepsilon' \lambda_\RT} \leq \lambda.
    \]
\end{thm}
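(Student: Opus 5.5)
Our plan is the standard GLB argument via the enriched Crouzeix-Raviart interpolation $I_\eCR:\V\to\eCR_0^1(\Tcal)$ and a discrete min-max principle for the degenerate pencil with $Q=\Pi_0$. The interpolation $I_\eCR$ is fixed by the face integral means $\dashint_F I_\eCR v\ds=\dashint_F v\ds$ for $F\in\Fcal$ and the cell means $\Pi_0 I_\eCR v=\Pi_0 v$. An integration by parts over each $T$ then gives $\nabla_\pw I_\eCR v=\Pi_0\nabla v$ on $\CR$-parts. Since $\flat_T$ is $a$-orthogonal to affine functions on $T$ (its gradient has mean zero, by the Marini identity), we obtain $\Vvert{I_\eCR v}_\pw\le\Vvert{v}$.

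The key facts we will use for $v\in\V$ are:
\begin{itemize}
\item[(i)] $\Pi_0 I_\eCR v=\Pi_0 v$, so that $(\Pi_0 I_\eCR v,\Pi_0 I_\eCR v)_V=\|\Pi_0 v\|_V^2\le\|v\|_V^2$ for $V\in P_0(\Tcal)$ by Jensen on each $T$;
\item[(ii)] $\Vvert{I_\eCR v}_\pw\le\Vvert{v}$;
\item[(iii)] the Poincaré inequality $\|v-\Pi_0 v\|\le C_Ph_\Tcal\Vvert{v}_T$, giving $\|v\|^2=\|\Pi_0 v\|^2+\|v-\Pi_0v\|^2\le\|\Pi_0 I_\eCR v\|^2+\varepsilon'\Vvert{v}^2$.
\end{itemize}

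Let $E_k=\linh\{u_1,\dots,u_k\}$ be spanned by the first $k$ exact eigenfunctions. Then $a(v,v)+\|v\|_V^2\le\lambda\|v\|^2$ for all $v\in E_k$, with $\lambda=\lambda_k$. Suppose first that $\Pi_0 I_\eCR$ is injective on $E_k$. Then $I_\eCR E_k$ is $k$-dimensional and avoids $\ker\Pi_0$, so the discrete min-max principle for the pencil ($A$ SPD, $B$ semidefinite, finite eigenvalues counted first) yields
\[
\lambda_\RT\le\max_{v\in E_k\setminus\{0\}}\frac{\Vvert{I_\eCR v}_\pw^2+\|\Pi_0 I_\eCR v\|_V^2}{\|\Pi_0 I_\eCR v\|^2}\le\max_{v\in E_k\setminus\{0\}}\frac{\Vvert{v}^2+\|v\|_V^2}{\|\Pi_0 v\|^2}.
\]
For $v$ attaining this maximum, (iii) and the exact Rayleigh bound give
\[
\|v\|^2\le\|\Pi_0 v\|^2+\varepsilon'\Vvert{v}^2\le\frac{\Vvert{v}^2+\|v\|_V^2}{\lambda_\RT}+\varepsilon'\bigl(\Vvert{v}^2+\|v\|_V^2\bigr).
\]
Dividing by $\Vvert{v}^2+\|v\|_V^2\le\lambda\|v\|^2$, we obtain $1/\lambda\le 1/\lambda_\RT+\varepsilon'$. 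This rearranges to $\lambda_\RT/(1+\varepsilon'\lambda_\RT)\le\lambda$, as claimed.

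The remaining point is the degenerate case in which $\Pi_0 v=0$ for some $0\ne v\in E_k$. Then (iii) gives $\|v\|^2\le\varepsilon'\Vvert{v}^2\le\varepsilon'\lambda\|v\|^2$, hence $1\le\varepsilon'\lambda$. Consequently $\lambda_\RT/(1+\varepsilon'\lambda_\RT)<1/\varepsilon'\le\lambda$, and the bound holds trivially. This case analysis, together with a careful statement of the discrete min-max principle when $B$ is only semidefinite (that is, the $k$-th finite eigenvalue is the min-max over $k$-dimensional subspaces meeting $\ker\Pi_0$ trivially), is the main technical point.

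The other step requiring care is verifying (ii) for the bubble part, i.e.\ that $\nabla_\pw I_\eCR v$ splits orthogonally into $\Pi_0\nabla v$ plus a bubble gradient bounded by the remainder. We would take this from the interpolation results of Subsection~\ref{sec:4.1}, or from \cite[Corollary 5.1]{Gal23}.
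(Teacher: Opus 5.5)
Your proof is correct, but it follows a different route from the paper. The paper gives no proof of Theorem~\ref{thm:GLB_RT} in Section~\ref{sec:4}, which only treats the CR, eCR and mCR bounds. Instead it cites the abstract mixed-method framework of \cite[Corollary 5.1]{Gal23} for the Raviart--Thomas scheme, with the link to \eqref{eq:dSEVP} for \(P=\Pi_0=Q\) supplied by the Marini identity (Remark~\ref{rem:Marini}).

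You work directly in the eCR formulation in which the theorem is stated, using the Case~1/Case~2 pattern of Subsection~\ref{sec:4.2} with the degeneracy test ``\(\Pi_0\) injective on \(E(k)\)'' in place of ``\(I\) injective on \(E(k)\)''. Your argument shows that for \(Q=\Pi_0\) the denominator satisfies \(\|\Pi_0 Iu\|^2=\|\Pi_0u\|^2\ge 1-\varepsilon'\Vvert{u}^2\) (for \(\|u\|=1\)) by (I3) and a single elementwise Poincar\'e inequality. Hence no lower bound like \eqref{eq:lower} on \(\|Iu\|^2\) is needed, and the term \(\Vvert{u-Iu}_\pw^2\) need not be balanced through the parameter \(t\) in \eqref{eq:KI}. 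The Pythagoras remainder from (I1) is simply dropped, and \eqref{eq:key} from Lemma~\ref{lem:4.9} handles the potential.

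What your route buys: it is self-contained, using only (I1), (I3), \eqref{eq:key}, the Poincar\'e inequality, and the min-max characterisation of the finite eigenvalues of a pencil with semidefinite right-hand side. You state that characterisation correctly, and it does not depend on the mixed equivalence. What the paper's citation buys: it ties the bound to the RT implementation actually used in the computations (Remark~\ref{rem:instable}), within a framework that covers other mixed schemes.

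One small correction. The mean-zero gradient of \(\flat_T\) only gives the orthogonal splitting \(\nabla_\pw I_\eCR v=\Pi_0\nabla v+\sum_T c_T\nabla\flat_T\). The bound \(\Vvert{I_\eCR v}_\pw\le\Vvert{v}\) really needs the orthogonality (I1), i.e.\ \(a_T(v-I_\eCR v,\flat_T)=0\). This holds because \(\Delta\flat_T\) is constant and \(\partial_\nu\flat_T\) is constant on each face of \(T\), which is available from Subsection~\ref{sec:4.1} as you indicate.
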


\begin{rem}[Marini identity] \label{rem:Marini}
    The right-hand sides \(f = (\lambda_\eCR-V) \Pi_0 u_\eCR \in P_0(\Tcal)\) and
    \(f=(\lambda_\RT-V)u_\RT \in P_0(\Tcal)\) lead in \cite[Theorem 3.3]{HM15} to
    the equivalence of the mixed RTEVP and the nonconforming eigenvalue problem in
    \eqref{eq:dSEVP} with \(P = \Pi_0 = Q\) and \(\Vh = \eCR_0^1(\Tcal)\).
\end{rem}

\begin{rem}[instability] \label{rem:instable}
	The linear operator \(\Pi_0 : \eCR_0^1(\Tcal) \to P_0(\Tcal)\) is surjective
	with \(\dim P_0(\Tcal) = |\Tcal|\). Consequently, \(\dim \ker\Pi_0 = |\Fcal(
    \Omega)|\). Undisplayed numerical experiment with the MATLAB routine \texttt{eigs}
    experience difficulty to solve the algebraic EVP \eqref{eq:dSEVP} with \(Q =
    \Pi_0\) for \(|\Tcal|\) large. RTEVP is implemented directly via the edge-oriented
    Raviart-Thomas basis \cite[Section 4]{BC05}.
\end{rem}

%%% mCR %%%
\subsection{Modified Crouzeix-Raviart} \label{sec:3.4}
The observation in Remark~\ref{rem:instable} motivates the modified (enriched) 
Crouzeix-Raviart (mCR) scheme in \eqref{eq:dSEVP}: Seek \((\lambda_\mCR,u_\mCR
) \in \R_+ \times \eCR_0^1 (\Tcal)\) with \(\|u_\mCR\| = 1\) such that \(P =
\Pi_0\) and \(Q = \mathrm{id}\) in \eqref{eq:dSEVP}. The algebraic eigenvalue
problem \eqref{eq:dSEVP} with SPD coefficient matrices has \(N = \dim\eCR_0^1
\) positive eigenvalues \(0 < \lambda_{\mCR(1)} \leq \dots \leq \lambda_{\mCR
(N)} <\infty\). Recall \(\varepsilon'' = \|\kappa_\eCR^2 h_\Tcal^2\|_\infty\)
from Table~\ref{tab:1}.

\begin{thm}[GLB from mCR for \(V \in P_0(\Tcal)\)] \label{thm:GLB_mCR}
    The \(k\)-th exact eigenvalue \(\lambda = \lambda_k\) in \eqref{eq:SEVP} and the
    \(k\)-th mCR eigenvalue \(\lambda_\mCR = \lambda_{\mCR(k)}\) in \eqref{eq:dSEVP}
    with \(P = \Pi_0\), \(Q = \mathrm{id}\), and \(\Vh = \eCR_0^1(\Tcal)\) for the 
    same number \(k \in \N\) satisfy for a piecewise constant potential \(V\in P_0
    (\Tcal)\) that
    \[
        \GLB_\mCR \coloneqq \frac{\lambda_\mCR}{1 + \frac{\varepsilon' \varepsilon'' \lambda_\mCR^2}{1 +
        \varepsilon'' \lambda_\mCR}} \leq \lambda.
    \]
\end{thm}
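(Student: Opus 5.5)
The proof follows the standard nonconforming route. We use the eCR interpolation \(I_\eCR:\V\to\eCR_0^1(\Tcal)\) of Subsection~\ref{sec:4.1} to pull exact eigenfunctions into the discrete space, and compare Rayleigh quotients through the min-max principle for the algebraic EVP \eqref{eq:dSEVP} with \(P=\Pi_0\), \(Q=\mathrm{id}\). We assume the following properties of \(I_\eCR\), stated there.
\begin{itemize}
\item It preserves face integral means and cell integral means, so \(\Pi_0 I_\eCR v=\Pi_0 v\).
\item It satisfies the orthogonality \(a_\pw(v-I_\eCR v,w_h)=0\) for \(w_h\in\CR\)-part. Hence \(\nabla_\pw I_\eCR v=\Pi_0\nabla v\) and \(\Vvert{I_\eCR v}_\pw\le\Vvert{v}\).
\item It has the approximation bound \(\|v-I_\eCR v\|\le \|\kappa_\eCR h_\Tcal\nabla_\pw(v-I_\eCR v)\|\).
\end{itemize}
We also use the Poincaré inequality \(\|v-\Pi_0 v\|\le\|C_P h_\Tcal(\nabla v-\Pi_0\nabla v)\|\) on each simplex.

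Fix the span \(E_k\) of the first \(k\) exact eigenfunctions and set \(v\in E_k\) with \(\|v\|=1\). Since \(V\in P_0(\Tcal)\), the potential part is exact: \((\Pi_0 I_\eCR v,\Pi_0 I_\eCR v)_V=\|\Pi_0 v\|_V^2\le\|v\|_V^2\). This is the reason \(\delta'\) disappears from \(\GLB_\mCR\). The energy satisfies \(\Vvert{I_\eCR v}_\pw^2=\Vvert{v}^2-\Vvert{v-I_\eCR v}_\pw^2\) by the Pythagoras relation. Consequently the discrete numerator is bounded by \(\lambda-\Vvert{v-I_\eCR v}_\pw^2\) for the worst \(v\), which uses \(\Vvert{v}^2+\|v\|_V^2\le\lambda\) on \(E_k\). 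The denominator is \(\|I_\eCR v\|^2\ge (1-\|v-I_\eCR v\|)^2\) type. To get the sharp form we should not use a triangle inequality. Instead we expand
\[
\|I_\eCR v\|^2=1-2b(v,v-I_\eCR v)+\|v-I_\eCR v\|^2 .
\]
Let \(e=v-I_\eCR v\) and \(t=\Vvert{e}_\pw\). Since \(\Pi_0 e=0\), we have \(b(v,e)=b(v-\Pi_0 v,e)\le \sqrt{\varepsilon'}\,\Vvert{v}\,\|e\|\). Together with \(\|e\|\le\sqrt{\varepsilon''}\,t\), this gives
\[
\|I_\eCR v\|^2\ge 1-2\sqrt{\varepsilon'\varepsilon''}\sqrt{\lambda}\,t .
\]
The dimension of \(I_\eCR E_k\) remains \(k\), because a vanishing denominator would contradict the bound once the resulting quotient is positive. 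Otherwise the GLB is trivially below \(\lambda\). The min-max principle then yields
\[
\lambda_\mCR\le \max_{t\ge0}\frac{\lambda - t^2}{1-2\sqrt{\varepsilon'\varepsilon''\lambda}\,t}.
\]
It remains to solve this scalar extremal problem and invert it for \(\lambda\). This is the main obstacle. Maximising over \(t\) gives the condition \(\lambda_\mCR\le \lambda+\varepsilon'\varepsilon''\lambda\lambda_\mCR^2\)-type algebra. On its own this would not reproduce the denominator \(1+\varepsilon''\lambda_\mCR\). We expect that the sharper version keeps the \(+\|e\|^2\) term, giving a denominator \(1-2\sqrt{\varepsilon'\lambda}\,s+s^2\) with \(s=\|e\|\le\sqrt{\varepsilon''}\,t\). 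The tightest combination then yields, after optimising over \(s\) (a quadratic), exactly \(\lambda\ge\lambda_\mCR/(1+\varepsilon'\varepsilon''\lambda_\mCR^2/(1+\varepsilon''\lambda_\mCR))\).

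I would first try the cleaner alternative that swaps the roles of the two bounds. We bound \(\lambda\) from below directly by testing the exact problem: we write \(1=\|\Pi_0 v\|^2+\|v-\Pi_0 v\|^2\) and use \(\|v-\Pi_0v\|^2\le\varepsilon'(\Vvert{v}^2-\Vvert{I_\eCR v}_\pw^2)\). Orthogonality makes \(\Pi_0\nabla v-\nabla v\) equal to the Poincaré defect. We then combine this with the discrete quotient of \(I_\eCR v\) in the form \(\Vvert{I_\eCR v}_\pw^2+\|\Pi_0v\|_V^2\ge\lambda_\mCR\|I_\eCR v\|^2\), and eliminate \(\|I_\eCR v\|^2\) via \(\|I_\eCR v-\Pi_0 v\|^2\le\varepsilon''\Vvert{I_\eCR v}_\pw^2\). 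A two-variable quadratic optimisation (Young's inequality with parameter \(\varepsilon''\lambda_\mCR\)) then produces the fraction \(\varepsilon'\varepsilon''\lambda_\mCR^2/(1+\varepsilon''\lambda_\mCR)\). Checking that this optimisation closes exactly is the step I expect to cost most effort.
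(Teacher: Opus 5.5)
Your ingredients are the paper's: Lemma~\ref{lem:4.9} for the potential, Pythagoras from (I1), the expansion $\|Iu\|^2=1-2b((1-\Pi_0)u,u-Iu)+\|u-Iu\|^2$ via (I3), and Poincaré with (I2). But the proposal stops short of a proof exactly where it matters.

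\textbf{What is missing or wrong.} The variant you carry out drops $+\|u-Iu\|^2$ and is, as you admit, too weak. The variant that keeps it is only announced. The route you say you would try first rests on false inequalities:
\begin{itemize}
\item $\|v-\Pi_0v\|_{L^2(T)}\le C_Ph_T\|\nabla v-\Pi_0\nabla v\|_{L^2(T)}$ fails whenever $v$ is affine and non-constant on $T$.
\item $\|v-\Pi_0v\|^2\le\varepsilon'(\Vvert{v}^2-\Vvert{I_{\eCR}v}_{\pw}^2)=\varepsilon'\Vvert{v-I_{\eCR}v}_{\pw}^2$ fails for smooth $v$ on uniformly refined meshes. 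The left-hand side is of exact order $h_{\max}^2$, the right-hand side is $\mathcal{O}(h_{\max}^4)$.
\item $\nabla_{\pw}I_{\eCR}v=\Pi_0\nabla v$ is false, since $\nabla\flat_T$ is not constant. What you actually need is (I1) against all of $\eCR_0^1(\Tcal)$, bubbles included, to get $\Vvert{u}^2=\Vvert{Iu}_{\pw}^2+\Vvert{u-Iu}_{\pw}^2$.
\end{itemize}
Finally, the degenerate case $\dim I_{\eCR}E(k)<k$ is not settled. If $I_{\eCR}u=0$ for some normalised $u\in E(k)$, your weakened denominator bound only gives $\lambda\ge1/(2\sqrt{\varepsilon'\varepsilon''})$. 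This does not dominate $\GLB_{\mCR}$ in general: $\GLB_{\mCR}\to1/\varepsilon'$ as $\lambda_{\mCR}\to\infty$, and $1/\varepsilon'>1/(2\sqrt{\varepsilon'\varepsilon''})$ whenever $\varepsilon'<4\varepsilon''$, e.g.\ for the constants of Section~\ref{sec:7}.

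\textbf{How to close both gaps (as in the paper).} In the degenerate case apply (I2) directly, $1=\|u-Iu\|^2\le\varepsilon''\Vvert{u}^2\le\varepsilon''\lambda$. Then check $\GLB_{\mCR}\le1/\varepsilon''$: with $x=\varepsilon''\lambda_{\mCR}$ one has $x\le1+x^2/(1+x)\le1+(\varepsilon'/\varepsilon'')\,x^2/(1+x)$, because $\varepsilon''\le\varepsilon'$ (recall $\kappa_{\eCR}\le C_P$).

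Otherwise let $Iu$ be the maximiser in the min-max principle and set $s\coloneqq\|u-Iu\|$. Use $s^2\le\varepsilon''\Vvert{u-Iu}_{\pw}^2$ to merge your numerator bound $\lambda-\Vvert{u-Iu}_{\pw}^2$ with the sharper denominator bound:
\[
\lambda_{\mCR}\bigl(1-2\sqrt{\varepsilon'\lambda}\,s+s^2\bigr)+s^2/\varepsilon''\le\lambda .
\]
The minimum of the left-hand side over $s\in\R$ equals $\lambda_{\mCR}-\varepsilon'\varepsilon''\lambda\,\lambda_{\mCR}^2/(1+\varepsilon''\lambda_{\mCR})$. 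This rearranges to exactly $\GLB_{\mCR}\le\lambda$.

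So your announced step is correct, and it is the paper's proof. The paper phrases this minimisation as the weighted Young inequality $2\sqrt{\varepsilon'\lambda}\,s\le t\varepsilon'\lambda+s^2/t$ with $t=\varepsilon''\lambda_{\mCR}/(1+\varepsilon''\lambda_{\mCR})$. That $t$ is chosen so the coefficient of $\Vvert{u-Iu}_{\pw}^2$ in \eqref{eq:KI} vanishes, and the minimiser is $s=t\sqrt{\varepsilon'\lambda}$.
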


\noindent
Equation \eqref{eq:End_mCR} concludes the proof of Theorem~\ref{thm:GLB_mCR}
in Subsection~\ref{sec:4.5} below with \(\gamma_h=(\varepsilon'\varepsilon''
\lambda_\mCR^2)/(1 + \varepsilon''\lambda_\CR)\) in Table~\ref{tab:1}.
\begin{rem}[Comparison with CECR] \label{rem:CECR}
	The mCR EVP is equivalent to the composite enriched Crouzeix-Raviart (CECR) EVP
    in \cite[Subsection 4.1.3]{Liu24}. For a piecewise constant potential \(V\in
    P_0(\Tcal)\) a combination of \cite[Theorem 3.1]{Liu24} and \cite[Inequality
    (4.9)]{Liu24} provides that \(\GLB_\mathrm{CECR} \coloneqq \lambda_\mCR/(1 +
    \varepsilon''\lambda_\mCR)\leq\lambda\). (Here \(\lambda \coloneqq \lambda_k
    \) is the \(k\)-th eigenvalue in \eqref{eq:SEVP} and \(\lambda_\mCR\coloneqq
    \lambda_{\mCR(k)}\) is the \(k\)-th mCR eigenvalue in \eqref{eq:dSEVP}.)
    Elementary algebra reveals that the strict inequality \(\GLB_\mathrm{CECR} <
    \GLB_\mCR\) is equivalent to \(C_P^2-\kappa_\eCR^2<h_{\max}^{-2}\lambda_\mCR
    ^{-1}\). The experiments in Section \ref{sec:7} on right-isosceles triangles
    employ \(C_P^2-\kappa_\eCR^2=0.02846\) and we found the resulting inequality
    \(h_{\max}^2 \lambda_\mCR < 35.13703\) holds in all experiments displayed, 
    except for the coarsest mesh in Figure~\ref{fig:Square_20_harm}.
\end{rem}

%%% Guaranteed lower eigenvalue bounds %%%%%%%%%%%%%%%%%%%%%%%%%%%%%%%%%%%%%%%%%%%%%%%%%%%%%%%%%%%%%%%%%%%%
\section{Proof of guaranteed lower eigenvalue bounds} \label{sec:4}
\vspace*{-0.25cm} % for spacing in the preprint
This section simultaneously proves the GLB in Theorems~\ref{thm:GLB_CR},
\ref{thm:GLB_eCR_1},~\ref{thm:GLB_eCR}, and~\ref{thm:GLB_mCR}.

%%% Nonconforming interpolation %%%%%%%%%%%%%%%%%%%%%%%%%%%%%%%%%%%%%%%%%%%%%%%%%%%%%%%%%%%%%%%%%%%%%%%%%%%
\subsection{Nonconforming interpolation} \label{sec:4.1}
\vspace*{-0.1cm} % for spacing in the preprint

\begin{defi}[nonconforming interpolation] \label{def:I}
    The best-approximation operator \(I \in L(\V+\Vh;\Vh)\) with respect to
    the discrete energy scalar product \(a_\pw(\bullet,\bullet)\) is called
    nonconforming interpolation operator.
\end{defi}

\noindent
The following properties of \(I\) are fundamental in the subsequent analysis and
specify a positive constant \(\kappa_h > 0\) such that all \((v,v_h) \in \V \times
\Vh\) satisfy
\begin{itemize}
    \item[(I1)] \(a_\pw(v-Iv, v_h) = 0\)
    \item[(I2)] \(\Vert v-Iv \Vert_T \leq \kappa_h h_T \, \Vvert{v-Iv}_T\) for the
    size \(h_T = \diam(T)\) of the simplex \(T \in \Tcal\).
\end{itemize}
The enriched Crouzeix-Raviart interpolation operator \(I \coloneqq I_\eCR\) enjoys
the additional annihilation property
\begin{itemize}
    \item[(I3)] \(\Pi_0 Iv = \Pi_0 v\) for all \(v \in \V\).
\end{itemize}

\begin{rem}[interpolation in \(\V+\Vh\)] \label{rem:interpolation}
    Rather \(a_\pw(\bullet,\bullet)+(\bullet,\bullet)_V\) than \(a_\pw(\bullet,\bullet
    )\) is the natural energy scalar product in the Schr\"odinger eigenvalue problem
    \eqref{eq:SEVP}. This scalar product gives rise to a (different) best-approximation
    \(\mathcal{I} \in L(\V+\Vh;\Vh)\) characterized by \(\mathcal{I}v \in \Vh\) and
    \(a_\pw(v-\mathcal{I}v,w_h)+(v-\mathcal{I}v,w_h)_V=0\) for all \((v,w_ h)\in \V
    \times \Vh\). This framework leads in \cite{Liu15} to \(\GLB_\mu = \lambda_h/(1
    +\kappa_h^2 h_{\max}^2\lambda_h) \leq \lambda\) with \(\kappa_h\) from (I2) for
    \(\mathcal{I}\). However, the approach in \cite[Section 4.2.2]{Liu24} leads to
    GLB in Theorem~\ref{thm:GLB_Liu} that are less sharp than that of Theorem~\ref{thm:GLB_CR}.
\end{rem}

\begin{ex}[Crouzeix-Raviart] \label{ex:ICR}
    Let \(\lambda_P\) denote the barycentric coordinate associated to the vertex \(
    P \in \Vcal(T)\) opposite to the side \(F \in\Fcal(T)\) and \(\psi_F\coloneqq 1
    -n\lambda_P\) in \(T\in \Tcal\). Then \((\psi_F : F \in \Fcal(\Omega))\) is the
	side-oriented basis of \(\CR_0^1(\Tcal)\) and gives rise to the CR interpolation
    operator \(I_\CR : \V + \CR_0^1(\Tcal) \to \CR_0^1(\Tcal)\) defined by
    \[
        I_\CR v \coloneqq \sum_{F \in \Fcal(\Omega)} \Big(\dashint_F v \ds\Big)\psi_F
        \qquad\textup{for all } v \in \V + \CR_0^1(\Tcal).
    \]
    The Crouzeix-Raviart interpolation operator \(I_\CR\) satisfies (I1)--(I2) proven
    in \cite{CG14} with a constant \(\kappa_h = \kappa_\CR\) in (I2): \cite{Liu15}
    computes the bound \(\kappa_\CR\leq 0.1893\) for \(n = 2\) on arbitrary triangles,
    while \cite{CZZ20} provides
    \[
        \kappa_\CR^2 \coloneqq C_P^2 + \frac{1}{2n(n+1)(n+2)}
    \]
    for all \(n \geq 2\) with the Poincar\'e constant \(C_P \leq 1/\pi\). The
    numerical examples in Section~\ref{sec:7} run \(\kappa_\CR = 0.1893\).
\end{ex}

\begin{ex}[enriched Crouzeix-Raviart] \label{ex:IeCR}
    The quadratic bubble-functions \((\flat_T : T \in \Tcal)\) from \eqref{eq:bubble_def}
    satisfy by design \(\int_F \flat_T \ds =0\) and \(\dashint_T \flat_T \dx =1\) for all
    \(F \in\Fcal(T)\) and all \(T\in\Tcal\) \cite[Lemma 2.1]{HM14}. The eCR interpolation
    operator \(I_{\eCR} : \V + \eCR_0^1(\Tcal) \to \eCR_0^1(\Tcal)\) is defined by
    \[
        I_\eCR v \coloneqq I_\CR v + \sum_{T \in \Tcal} \Big(\dashint_T v - I_\CR v \dx \Big)\flat_T.
    \]
    The eCR interpolation \(I_\eCR\) satisfies (I1)--(I3) and in particular, for \(v \in
    \V\), that
    \begin{equation}
        \dashint_F I_\eCR v \ds = \dashint_F v \ds  \textup{ for all } F \in \Fcal
        \textup{ \ and \ }
        \dashint_T I_\eCR v \dx = \dashint_T v \dx  \textup{ for all } T \in \Tcal.
        \label{eq:IeCR}
    \end{equation}
    The orthogonality (I1) is proven in \cite[ Lemma 4.6.]{CG25}, while (I3) follows from
    \eqref{eq:IeCR}. A Poincar\'e inequality and (I3) reveal, for any \(T\in\Tcal\), that
    \[
        \|v-I_\eCR v\|_{L^2(T)} = \|(1-\Pi_0)(v-I_\eCR v)\|_{L^2(T)} \leq C_P h_T \, \Vvert{v-I_\eCR v}_T.
    \]
    Hence \(\kappa_\eCR \leq C_P\), while \(\kappa_\eCR \leq 0.1490\) is computed on arbitrary
    triangles for \(n = 2\) in \cite{XXL18}. The numerical examples in Section~\ref{sec:7} run
    \(\kappa_\eCR = 0.149\).
\end{ex}

%%% Setup %%%%%%%%%%%%%%%%%%%%%%%%%%%%%%%%%%%%%%%%%%%%%%%%%%%%%%%%%%%%%%%%%%%%%%%%%%%%%%%%%%%%%%%%%%%%%%%%
\subsection{Setup for four GLB} \label{sec:4.2}
Suppose that \(\lambda = \lambda_k\) is the \(k\)-th exact eigenvalue in \eqref{eq:SEVP} and
that \(\lambda_h=\lambda_{h(k)}\) is the \(k\)-th discrete eigenvalue in \eqref{eq:dSEVP} of
the same number \(k \in \N\). Let \((\lambda_1,u_1), \dots, (\lambda_k,u_k)\) denote the first
\(k\) eigenpairs of \eqref{eq:SEVP} with \(0 < \lambda_1 \leq\lambda_2 \leq\dots \leq\lambda_k
\equiv\lambda\). The orthonormal eigenfunctions \(u_1,\dots,u_k\) define the invariant subspace
\(E(k) \coloneqq \mathrm{span}\{u_1, \dots, u_k\}\) of dimension \(k\), while the nonconforming
space \(IE(k) \coloneqq \mathrm{span}\{Iu_1, \dots, Iu_k\}\) merely satisfies \(\dim IE(k) \leq
k\). In particular, if \(k \geq \dim\Vh + 1\), then \(IE(k)\) cannot be \(k\)-dimensional.
Consequently, we distinguish two cases.
\\
\\
\textbf{Case 1.} If \(\dim IE(k) \leq k-1\), then there exists some \(u \in E(k)\) such that
\(\|u\| = 1 \) and \(Iu = 0\). These properties of \(u\) and (I2) reveal that
\begin{equation}
    1
	=       \|u\|^2
	=       \|u-Iu\|^2
	\leq    \|h_\Tcal^2 \kappa_h^2\|_\infty \, \Vvert{u-Iu}_\pw^2
	=       \|h_\Tcal^2 \kappa_h^2\|_\infty \, \Vvert{u}_\pw^2.
	\label{eq:S11}
\end{equation}
Since \(u \in E(k)\) and \(\|u\| = 1\), there exist \(\xi_1, \dots, \xi_k \in \R\)
such that \(u = \sum_{j=1}^k \xi_j u_j\) and \(\xi_1^2 + \dots + \xi_k^2 = \|u\|^2
= 1\). This, the \(a(\bullet,\bullet) + (\bullet,\bullet)_V\) orthonormality of \(
u_1,\dots, u_k\), and \eqref{eq:eVal} establish
\begin{equation}
\Vvert{u}^2 + \|u\|_V^2 = \sum_{j=1}^k \xi_j^2 \, (\Vvert{u_j}^2 + \|u_j\|_V^2)
= \sum_{j=1}^k \xi_j^2 \, \lambda_j
\leq \max\{\lambda_1, \dots, \lambda_k\} \sum_{j=1}^k \xi_j^2 = \lambda_k.
\label{eq:S12}
\end{equation}
Recall \(\varepsilon\) and \(\varepsilon''\) from Table~\ref{tab:1} and define
\(\widetilde{\varepsilon} \coloneqq \varepsilon\) for \(\Vh = \CR_0^1(\Tcal)\)
respectively \(\widetilde{\varepsilon} \coloneqq \varepsilon''\) for \(\Vh =
\eCR_0^1(\Tcal)\). The combination of \eqref{eq:S11} and \eqref{eq:S12} reveals
\(1/\widetilde{\varepsilon} \leq \lambda\). Recall \(\GLB \equiv \lambda_h/(1 +
\gamma_h)\) with \(\gamma_h\) specified in Table~\ref{tab:1}. Elementary
calculations reveal in all examples for \(\gamma_h\) from Table~\ref{tab:1}
that the GLB from \eqref{eq:glb} is smaller or equal than \(1/\widetilde{
\varepsilon} \leq \lambda\).
\\
\\
\textbf{Case 2.} Suppose that \(\dim IE(k) = k \leq \dim \Vh\). The algebraic
min-max principle for the discrete problem \eqref{eq:dSEVP} implies
\begin{equation}
    \lambda_h
    \leq    \max_{v_h \in IE(k) \setminus \{0\}}
            \frac{a_\pw(v_h,v_h) + (Pv_h,Pv_h)_V}{\Vert v_h \Vert^2}.
    \label{eq:S21}
\end{equation}
The maximiser in \eqref{eq:S21} exists and can be written as \(Iu \in IE(k) \setminus
\{0\}\) for some \(u \in E(k)\) with \(\|u\| = 1\), whence \eqref{eq:S21} reads
\begin{equation}
    \lambda_h \, \|Iu\|^2 \leq \Vvert{Iu}_\pw^2 + \|PIu\|_V^2.
    \label{eq:S22}
\end{equation}
The Pythagoras identity \(\Vvert{Iu}_\pw^2 = \Vvert{u}^2 - \Vvert{u-Iu}_\pw^2\)
from (I2) and \(\Vvert{u}^2 \leq \lambda - \|u\|_V^2\) from \eqref{eq:S12} lead
in \eqref{eq:S22} to
\begin{equation}
    \lambda_h \, \|Iu\|^2 + \Vvert{u-Iu}_\pw^2 \leq \lambda + \|PIu\|_V^2 - \|u\|_V^2.
    \label{eq:S23}
\end{equation}

%%% Lower bound %%%%%%%%%%%%%%%%%%%%%%%%%%%%%%%%%%%%%%%%%%%%%%%%%%%%%%%%%%%%%%%%%%%%%%%%%%%%%%%%%%%%%%%%%%
\subsection{Lower bound on \(\|Iu\|^2\)} \label{sec:4.3}
This subsection establishes for any \(0 < t < 1\) the existence of some \(a \in
\{0,1\}\) in Table~\ref{tab:const} such that
\begin{equation}
    \big(1 - at - t(1-a)\varepsilon'\lambda\big) \, \lambda_h
    - (t^{-1}-1) \, \widetilde{\varepsilon} \, \Vvert{u-Iu}_\pw^2
    \leq    \Vert Iu \Vert^2.
    \label{eq:lower}
\end{equation}

\begin{lem}
    Suppose that \(I\) satisfies (I1)--(I2), then \(a = 0\) in \eqref{eq:lower}.
\end{lem}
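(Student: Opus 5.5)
The plan is to expand the $L^2$ norm of $u=Iu+(u-Iu)$. The normalisation $\|u\|=1$ of the element $u\in E(k)$ from Case~2 then gives the lower bound on $\|Iu\|^2$. Only (I1)--(I2) may be used here, since the annihilation property (I3) is what separates the case $a=1$ from $a=0$. The target is \eqref{eq:lower} with $a=0$, that is, $(1-t\varepsilon'\lambda)\lambda_h-(t^{-1}-1)\widetilde\varepsilon\,\Vvert{u-Iu}_\pw^2\le\|Iu\|^2$.

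\textbf{Step 1 (expansion).} Write
\[
1=\|u\|^2=\|Iu\|^2+2\,b(Iu,u-Iu)+\|u-Iu\|^2 .
\]
The only quantities to control are the cross term and $\|u-Iu\|^2$. By (I2), applied elementwise and summed,
\[
\|u-Iu\|^2\le\widetilde\varepsilon\,\Vvert{u-Iu}_\pw^2 .
\]

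\textbf{Step 2 (cross term).} Split $b(Iu,u-Iu)=b(u,u-Iu)-\|u-Iu\|^2$.

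\begin{itemize}
\item Bound $b(u,u-Iu)$ via a weighted Young inequality with weight $t\in(0,1)$, so that $(t^{-1}-1)\|u-Iu\|^2\le(t^{-1}-1)\widetilde\varepsilon\,\Vvert{u-Iu}_\pw^2$ appears on one side.
\item The complementary piece must be expressed through the Poincar\'e constant $C_P$ on each simplex. To do this, write $u=\Pi_0u+(1-\Pi_0)u$ and estimate $\|(1-\Pi_0)u\|\le\|C_Ph_\Tcal\|_\infty\Vvert{u}$.
\item Then bound $\Vvert{u}^2\le\lambda$ by \eqref{eq:S12}. This is what yields the term $t\varepsilon'\lambda$, with $\varepsilon'=\|C_P^2h_\Tcal^2\|_\infty$.
\end{itemize}

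\textbf{Step 3 (normalisation by $\lambda_h$).} The factor $\lambda_h$ on the left of \eqref{eq:lower} is to be inserted via the discrete Rayleigh quotient \eqref{eq:S22}. My plan is to multiply the dimensionless inequality from Steps~1--2 by $\lambda_h$. The discrete min-max bound $\lambda_h\|Iu\|^2\le\Vvert{Iu}_\pw^2+\|PIu\|_V^2$ is then used to compare the terms, so that the dimensions match the form in \eqref{eq:lower}. If this bookkeeping does not close, I would fall back on the dimensionless version $1-t\varepsilon'\lambda-(t^{-1}-1)\widetilde\varepsilon\,\Vvert{u-Iu}_\pw^2\le\|Iu\|^2$. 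I expect this version to be what is actually used in \eqref{eq:S23}.

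\textbf{Main obstacle.} The difficulty is the term $b(\Pi_0u,u-Iu)$ in Step~2. Without (I3) it does not vanish, and it must be absorbed using only (I1)--(I2) and the Poincar\'e estimate. I would first try the direct bound $|b(u,u-Iu)|\le\|u\|\,\|u-Iu\|$. This is followed by Young's inequality with parameter $t$, and by the comparison $\|u-Iu\|\le\widetilde\varepsilon^{1/2}\Vvert{u-Iu}_\pw$, applied to the part that carries the $(t^{-1}-1)$ weight. The remaining part is bounded through $C_P$ and $\Vvert{u}^2\le\lambda$.
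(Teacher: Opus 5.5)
Your target inequality cannot be proved from (I1)--(I2), so Step~2 fails. You aim for the bracket \(1-t\varepsilon'\lambda\), i.e.\ \((1-t\varepsilon'\lambda)-(t^{-1}-1)\widetilde\varepsilon\,\Vvert{u-Iu}_\pw^2\le\|Iu\|^2\) for all \(0<t<1\). That inequality is false for the Crouzeix--Raviart interpolation, which satisfies (I1)--(I2). The obstruction is exactly the term you flag: without (I3), \(\Pi_0(u-Iu)\neq 0\). For \(q=|x-\mid(T)|^2\) one has \(q-I_\CR q=-\flat_T/c_T\), where \(c_T\approx h_T^{-2}\) is the coefficient in \eqref{eq:bubble_def}, so \(\dashint_T(q-I_\CR q)\dx=-1/c_T\neq0\). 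For instance, with \(V=0\) on equilateral meshes, \(\dashint_T(u-I_\CR u)\dx\approx -h_T^2\Delta u/48=h_T^2\lambda u/48\). Hence for a smooth eigenfunction \(u=u_1\) on fine meshes
\[
1-\|Iu\|^2=2b(\Pi_0u,\Pi_0(u-Iu))+2b((1-\Pi_0)u,u-Iu)-\|u-Iu\|^2=2b(\Pi_0u,\Pi_0(u-Iu))+O(h^3),
\]
which is positive and of exact order \(h^2\) (about \(\lambda h^2/24\) in that example). Your target, evaluated at \(t^2=\widetilde\varepsilon\Vvert{u-Iu}_\pw^2/(\varepsilon'\lambda)<1\), would force \(1-\|Iu\|^2\le 2(\varepsilon'\lambda\widetilde\varepsilon)^{1/2}\Vvert{u-Iu}_\pw=O(h^3)\). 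So no combination of Cauchy, Young and Poincar\'e inequalities can absorb \(b(\Pi_0u,u-Iu)\). The bracket \(1-t\varepsilon'\lambda\) genuinely needs (I3); it is the content of the companion lemma. The label \(a=0\) in the statement conflicts with \eqref{eq:lower} and Table~\ref{tab:const}, where CR (only (I1)--(I2)) carries \(a=1\). What is actually proved, and what enters \eqref{eq:KI} for CR, is the bracket \(1-t\).

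Your fallback is the argument that works, but you misjudge what it delivers. Insert \(2b(u,u-Iu)\le 2\|u-Iu\|\le t+t^{-1}\|u-Iu\|^2\) into your Step~1 identity \(\|Iu\|^2=1-2b(u,u-Iu)+\|u-Iu\|^2\). This gives \((1-t)-(t^{-1}-1)\|u-Iu\|^2\le\|Iu\|^2\). Then (I2) and \(t^{-1}-1>0\) give \((1-t)-(t^{-1}-1)\widetilde\varepsilon\,\Vvert{u-Iu}_\pw^2\le\|Iu\|^2\). This is precisely the paper's proof, written there via \((1-\|u-Iu\|)^2\le\|Iu\|^2\) and Young's inequality. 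Nothing is left over to estimate through \(C_P\), and \(t\varepsilon'\lambda\) never appears; (I1) is not even needed. Drop Step~3 as well. You are right that the factor \(\lambda_h\) in \eqref{eq:lower} is a misprint: as written the inequality fails for large \(\lambda_h\). The intended bound is dimensionless, and \(\lambda_h\) enters only when it is multiplied by \(\lambda_h\) and combined with \eqref{eq:S23} and \eqref{eq:upper} to give \eqref{eq:KI}.
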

\begin{proof}
    A reverse triangle inequality and \(\|u\| = 1\) establish the lower bound
    \[
        1 - 2\Vert u-Iu \Vert + \Vert u-Iu \Vert^2
        =       (1 - \Vert u-Iu \Vert)^2
        \leq    \Vert Iu \Vert^2.
    \]
    The weighted Young inequality \(2\|u-Iu\| \leq t + t^{-1} \|u-Iu\|^2\) holds
    for any \(0 < t < 1\) and leads in the last displayed inequality to \((1-t)-
    (t^{-1}-1) \|u-Iu\|^2 \leq \|Iu\|^2\). This, (I2), and \(1-t^{-1} < 0\) result
    in
    \[
        (1-t) - \widetilde{\varepsilon} \, (t^{-1}-1) \, \Vvert{u-Iu}_\pw^2
        \leq    (1-t) - (t^{-1}-1) \, \|u-Iu\|^2
        \leq    \|Iu\|^2.
        \qedhere
    \]
\end{proof}

\begin{lem}
    Suppose that \(I\) satisfies (I1)--(I3), then \(a = 1\) in \eqref{eq:lower}.
\end{lem}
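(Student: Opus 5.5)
The plan is to refine the reverse-triangle argument of the previous lemma. The annihilation property (I3) lets us replace the crude bound \(2\|u-Iu\|\leq t+t^{-1}\|u-Iu\|^2\) by one in which the first factor is the small quantity \(\|u-\Pi_0 u\|\) rather than \(\|u\|=1\). The resulting lower bound on \(\|Iu\|^2\) then carries the term \(t\,\varepsilon'\lambda\), which is the \(\varepsilon'\lambda\)-dependent variant of \eqref{eq:lower} recorded in Table~\ref{tab:const}. As displayed, \eqref{eq:lower} attaches \(\varepsilon'\lambda\) to \(a=0\), which looks like a label or typo issue; I aim for that term regardless. I would follow the normalisation of the previous lemma, with leading term \(\|u\|^2=1\).

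First, expand \(\|Iu\|^2=\|u-(u-Iu)\|^2\) exactly instead of using the reverse triangle inequality:
\[
\|Iu\|^2 = \|u\|^2 - 2\,b(u,u-Iu) + \|u-Iu\|^2 = 1 - 2\,b(u,u-Iu) + \|u-Iu\|^2 .
\]
By (I3), \(\Pi_0(u-Iu)=0\), so \(u-Iu\) is \(L^2\)-orthogonal to \(P_0(\Tcal)\). Hence \(b(u,u-Iu)=b(u-\Pi_0 u,u-Iu)\), and the Cauchy--Schwarz inequality gives
\[
2\,b(u,u-Iu)\leq 2\|u-\Pi_0u\|\,\|u-Iu\|.
\]

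Second, bound \(\|u-\Pi_0u\|\) by the piecewise Poincaré inequality from Subsection~\ref{sec:1.4}. This gives \(\|u-\Pi_0 u\|^2\leq \varepsilon'\Vvert{u}^2\). Then \eqref{eq:S12} gives \(\Vvert{u}^2\leq \lambda-\|u\|_V^2\leq\lambda\), because \(V\geq 0\) after the shift \(\essinf V=0\). The weighted Young inequality \(2xy\leq t x^2+t^{-1}y^2\) with \(x=\|u-\Pi_0u\|\) and \(y=\|u-Iu\|\) therefore yields
\[
1 - t\,\varepsilon'\lambda - (t^{-1}-1)\,\|u-Iu\|^2 \leq \|Iu\|^2 .
\]
Here the \(+\|u-Iu\|^2\) from the expansion has absorbed one copy of \(t^{-1}\|u-Iu\|^2\).

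Finally, since \(t^{-1}-1>0\) for \(0<t<1\), the elementwise estimate (I2) gives \(\|u-Iu\|^2\leq\widetilde\varepsilon\,\Vvert{u-Iu}_\pw^2\). Inserting this reproduces the structure of \eqref{eq:lower}, namely
\[
\bigl(1 - t\,\varepsilon'\lambda\bigr) - (t^{-1}-1)\,\widetilde\varepsilon\,\Vvert{u-Iu}_\pw^2 \leq \|Iu\|^2 ,
\]
exactly as in the proof for (I1)--(I2), but with \(t\) replaced by \(t\varepsilon'\lambda\).

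The only delicate point is the bookkeeping of the constants and of the normalisation in \eqref{eq:lower}. That means matching the coefficient \(a\) and the \(\varepsilon'\lambda\) term with the conventions of Table~\ref{tab:const}, and checking whether the leading term should be \(1\) or is scaled by \(\lambda_h\) (the previous lemma proves it with leading term \(1\)). This has to be done consistently with how \eqref{eq:lower} is later combined with \eqref{eq:S23}. Analytically, the key step is the orthogonality \(b(\Pi_0u,u-Iu)=0\) from (I3); everything else is Cauchy--Schwarz, Poincaré and Young.
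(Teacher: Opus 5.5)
Your proposal is correct and follows the paper's proof. Both use the exact expansion $1+\|u-Iu\|^2-\|Iu\|^2=2\,b((1-\Pi_0)u,u-Iu)$ from (I3), then Cauchy--Schwarz, the elementwise Poincar\'e inequality with $\Vvert{u}^2\leq\lambda$, and a weighted Young inequality. This yields the $1-t\varepsilon'\lambda$ variant that Table~\ref{tab:const} records as $a=0$, so you read both typos correctly: the $a$-labels of the two lemmas are swapped, and the factor $\lambda_h$ in \eqref{eq:lower} belongs only in \eqref{eq:KI}.

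Your order of steps is actually the careful one. You absorb $+\|u-Iu\|^2$ into $t^{-1}\|u-Iu\|^2$ before invoking (I2), which gives the factor $t^{-1}-1$. The paper's display applies (I2) to $t^{-1}\|u-Iu\|^2$ first, which taken literally only gives $t^{-1}\widetilde{\varepsilon}$.
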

\begin{proof}
    Elementary algebra, \(\|u\| = 1\), and \(\Pi_0 I u = \Pi_0 u\) from (I3)
    reveal
    \[
        1 +  \|u-Iu\|^2 - \|Iu\|^2 = 2 \, b(u,u-Iu) = 2 \, b((1-\Pi_0)u, u-Iu).
    \]
    The Cauchy inequality \(b((1-\Pi_0)u, u-Iu) \leq \|(1-\Pi_0)u\| \, \|u-Iu\|
    \), the Poincar\'e inequality \(\|(1-\Pi_0)u\| \leq \sqrt{\varepsilon'} \,
    \Vvert{u}\), and a weighted Young inequality lead to
    \begin{align*}
        2 &\, b((1-\Pi_0)u, u-Iu)
        \leq    2 \sqrt{\varepsilon'} \, \Vvert{u} \, \|u-Iu\| \\
        &\leq   t \varepsilon' \, \Vvert{u}^2 + t^{-1} \|u-Iu\|^2
        \leq    t \varepsilon' \lambda + \varepsilon'' \, t^{-1} \Vvert{u-Iu}_\pw^2
    \end{align*}
    with \(\Vvert{u}^2 \leq \lambda - \|u\|_V^2 \leq \lambda\) from \eqref{eq:S12}
    and (I3) in the last step. The combination of this with the first identity of
    this proof concludes the proof.
\end{proof}

%%% Upper bound %%%%%%%%%%%%%%%%%%%%%%%%%%%%%%%%%%%%%%%%%%%%%%%%%%%%%%%%%%%%%%%%%%%%%%%%%%%%%%%%%%%%%%%%
\subsection{Upper bound on \(\|PIu\|_V^2 - \|u\|_V^2\)} \label{sec:4.4}
This subsection establish three different upper bounds of the form
\begin{equation}
    \Vert PIu \Vert_V^2 - \Vert u \Vert_V^2 \leq b\lambda + c \Vvert{u-Iu}_\pw^2
    \label{eq:upper}
\end{equation}
for the constants \(b,c\) of Table~\ref{tab:const} for any parameter \(0 < s < 1\).

\begin{table}[t]
    \centering
    \begin{tabular}{c c c c c}
        \hlinewd{1.5pt}
        EVP \eqref{eq:dSEVP} & Theorem & \(a\) & \(b\) & \(c\) \\
        \hlinewd{1.5pt}
        \vspace*{-0.3cm} \\
        CR & \ref{thm:GLB_CR}       & \(1\) & \(\delta/s\) & \((1-s^{-1}) \, \delta + s\) \smallskip \\
        eCR & \ref{thm:GLB_eCR_1}   & \(0\) & \(\delta'/s\) & \((1-s^{-1}) \, \delta' + s\) \smallskip \\
        eCR & \ref{thm:GLB_eCR}     & \(0\) & \(\delta'\) & \(-\delta'\) \smallskip \\
        mCR & \ref{thm:GLB_mCR}     & \(0\) & \(0\) & \(0\) \\
        \hlinewd{1.5pt}
    \end{tabular}
    \caption{Parameters \(a,b,c\) in \eqref{eq:lower}--\eqref{eq:upper}.}
    \label{tab:const}
\end{table}

\begin{lem}[\(P = \mathrm{id}\)] \label{lem:4.7}
    Suppose (I1)--(I2) and \(P = \mathrm{id}\), then \eqref{eq:upper} holds with \(\widetilde{
    \delta}\coloneqq\|h_\Tcal^2 \kappa_h^2 V\|_\infty\) in \(b = \widetilde{\delta}/s\) and \(
    c = (1 - 1/s)\widetilde{\delta} + s\) for any \(0 < s < 1\).
\end{lem}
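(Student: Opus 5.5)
We need to show \(\|Iu\|_V^2 - \|u\|_V^2 \le (\widetilde\delta/s)\lambda + ((1-1/s)\widetilde\delta + s)\Vvert{u-Iu}_\pw^2\) for \(0<s<1\). The plan is to expand the weighted norm of \(Iu = u - (u-Iu)\) and control the cross term by Young's inequality, with the interpolation estimate (I2) turning \(V\)-weighted \(L^2\) norms of \(u-Iu\) into \(\widetilde\delta\,\Vvert{u-Iu}_\pw^2\).

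\textbf{Expansion.} Write \(e\coloneqq u-Iu\), so that
\[
\|Iu\|_V^2-\|u\|_V^2=\|e\|_V^2-2(u,e)_V .
\]

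\textbf{Estimating \(\|e\|_V^2\).} Since \(0\le V\) and \(V\le\|V\|_{L^\infty(T)}\) on each \(T\), (I2) applied elementwise gives
\[
\|e\|_V^2\le\sum_T\|V\|_{L^\infty(T)}\kappa_h^2h_T^2\Vvert{e}_T^2\le\widetilde\delta\,\Vvert{e}_\pw^2 .
\]
Strictly this needs \(\|h_\Tcal^2\kappa_h^2V\|_\infty\) read elementwise with \(h_\Tcal\in P_0(\Tcal)\), which matches the definition of \(\widetilde\delta\).

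\textbf{Estimating the cross term.} Cauchy–Schwarz in the \(V\)-semi-inner product and a weighted Young inequality give
\[
-2(u,e)_V\le 2\|u\|_V\|e\|_V\le s\|u\|_V^2+s^{-1}\|e\|_V^2 .
\]
Combined with the previous step this yields
\[
\|Iu\|_V^2-\|u\|_V^2\le s\|u\|_V^2+(1+s^{-1})\widetilde\delta\,\Vvert{e}_\pw^2 .
\]
This does not yet have the target form: the sign of the \(\widetilde\delta\) coefficient is wrong, and \(\lambda\) does not yet appear.

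\textbf{Redistributing the weights.} To match \(b\) and \(c\), the Young weight should be placed so that the \(\lambda\) term carries \(\widetilde\delta/s\) and the \(\Vvert{e}_\pw^2\) term carries \(s\). Since \(\widetilde\delta\) couples to \(\Vvert{u}^2\) rather than to \(\|u\|_V^2\), the cleaner route is to bound \(\|Iu\|_V\) directly instead of expanding. By the triangle inequality \(\|Iu\|_V\le\|u\|_V+\|e\|_V\), and squaring with a Young weight gives
\[
\|Iu\|_V^2\le(1+s')\|u\|_V^2+(1+1/s')\widetilde\delta\,\Vvert{e}_\pw^2 .
\]
Subtracting \(\|u\|_V^2\) leaves \(s'\|u\|_V^2+\dots\), which must then be bounded by \(\lambda\) through \(\|u\|_V^2\le\lambda-\Vvert{u}^2\) from \eqref{eq:S12}.

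\textbf{Using the Pythagoras identity.} The identity \(\Vvert{u}^2=\Vvert{Iu}_\pw^2+\Vvert{e}_\pw^2\) from (I1) turns \(-\Vvert{u}^2\) into \(-\Vvert{e}_\pw^2\) minus a nonnegative term. This is how the term \(-\widetilde\delta/s\cdot\Vvert{e}_\pw^2\) in \(c\) arises: after multiplying \(\|u\|_V^2\le\lambda-\Vvert{u}^2\le\lambda-\Vvert{e}_\pw^2\) by the prefactor \(\widetilde\delta/s\), one obtains \((\widetilde\delta/s)\lambda-(\widetilde\delta/s)\Vvert{e}_\pw^2\). The remaining \(+\widetilde\delta\) and \(+s\) contributions come from \(\|e\|_V^2\le\widetilde\delta\,\Vvert{e}_\pw^2\) and from the Young split.

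\textbf{Fixing the split.} Concretely, I would aim for
\[
-2(u,e)_V\le 2\|u\|_V\sqrt{\widetilde\delta}\,\Vvert{e}_\pw\le(\widetilde\delta/s)\|u\|_V^2+s\Vvert{e}_\pw^2 ,
\]
then insert \(\|u\|_V^2\le\lambda-\Vvert{e}_\pw^2\) and add \(\|e\|_V^2\le\widetilde\delta\,\Vvert{e}_\pw^2\). Summing gives exactly \(b\lambda+c\Vvert{e}_\pw^2\) with \(b=\widetilde\delta/s\) and \(c=\widetilde\delta+s-\widetilde\delta/s\), as claimed. The restriction \(s<1\) is harmless here; it is presumably needed later.

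\textbf{Main obstacle.} The delicate step is choosing the Young split so that the \(\lambda\)-coefficient absorbs \(\widetilde\delta\), combined with \(\|u\|_V^2\le\lambda-\Vvert{u}^2\) and \(\Vvert{u}\ge\Vvert{e}_\pw\) from (I1). The rest is bookkeeping.
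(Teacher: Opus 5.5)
Your proposal is correct. Once the exploratory detours (the first Young split and the triangle-inequality variant) are set aside, your ``Fixing the split'' step is exactly the paper's proof: you expand \(\|Iu\|_V^2-\|u\|_V^2=\|e\|_V^2-2(u,e)_V\), bound \(\|e\|_V^2\le\widetilde\delta\,\Vvert{e}_\pw^2\) via (I2), apply Cauchy--Schwarz and the weighted Young inequality with weights \(\widetilde\delta/s\) and \(s\), and then use \(\|u\|_V^2\le\lambda-\Vvert{u}^2\) from \eqref{eq:S12} together with the Pythagoras identity from (I1). The only difference is cosmetic: the paper carries the term \(-(\widetilde\delta/s)\Vvert{Iu}_\pw^2\) and discards it at the end, while you drop it immediately via \(\Vvert{u}\ge\Vvert{e}_\pw\); your constant \(c=(1-1/s)\widetilde\delta+s\) matches the statement, and the paper's closing line omits the \(+s\) by a typo.
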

\begin{proof}
	Notice \(\|u-Iu\|_V^2 \leq \sum_{T\in\Tcal} \|V\|_{L^\infty(T)} \|u-Iu\|_{L^2(T)}^2\)
    so that (I2) reveals \(\|u-Iu\|_V^2 \leq \widetilde{\delta}\Vvert{u-Iu}_\pw^2\). This
    applies twice and elementary algebra and a Cauchy inequality result in
    \begin{align*}
        \|Iu\|_V^2 - \|u\|_V^2
        &=      \|u-Iu\|_V^2 + 2 \, (u, Iu-u)_V  \\
        &\leq   \widetilde{\delta} \, \Vvert{u-Iu}_\pw^2
                + 2 \, \|u\|_V \, \|u-Iu\|_V \\
        &\leq   \widetilde{\delta} \, \Vvert{u-Iu}_\pw^2
                + 2 \, \widetilde{\delta}^{1/2} \, \|u\|_V \, \Vvert{u-Iu}_\pw.
    \end{align*}
    A weighted Young inequality with \(0 < s < 1\) and \(\|u\|_V^2 \leq \lambda -\Vvert{u}^2\)
    from \eqref{eq:S12} establish
    \begin{align*}
        2 \, \widetilde{\delta}^{1/2} \, \|u\|_V \, \Vvert{u-Iu}_\pw
        &\leq   \frac{\widetilde{\delta}}{s} \, \|u\|_V^2 + s \Vvert{u-Iu}_\pw^2 \\
        &\leq   \frac{\widetilde{\delta}}{s} \lambda - \frac{\widetilde{\delta}}{s}
                \, \Vvert{u}^2 + s \Vvert{u-Iu}_\pw^2 \\
        &=      \frac{\widetilde{\delta}}{s} \lambda + \Big(s - \frac{\widetilde{\delta}}{s}\Big)
                \Vvert{u-Iu}_\pw^2 - \frac{\widetilde{\delta}}{s} \Vvert{Iu}_\pw^2
    \end{align*}
    with the Pythagoras identity \(\Vvert{u}^2 = \Vvert{Iu}_\pw^2 + \Vvert{u-Iu}_\pw^2\)
    from (I1) in the last step. The combination of the two displayed inequalities and \(
    -\widetilde{\delta} \Vvert{Iu}_\pw^2/s < 0\) concludes the proof of \eqref{eq:upper}
    for \(b = \widetilde{\delta}/s\) and \(c = (1-1/s)\widetilde{\delta}\). Recall \(
    \delta' = \|C_P^2 h_\Tcal^2 V\|_\infty\) from \eqref{eq:params}.
\end{proof}

\begin{lem}[\(P = \mathrm{id}\), (I3), \(V \in P_0(\Tcal)\)] \label{lem:4.8}
    Suppose (I1)--(I3), \(P = \mathrm{id}\), and a piecewise constant potential \(
    V \in P_0(\Tcal)\). Then \eqref{eq:upper} holds with \(b = \delta' = -c\).
\end{lem}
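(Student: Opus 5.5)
The plan is to exploit that \(V\in P_0(\Tcal)\) commutes with \(\Pi_0\) on each simplex, and that (I3) makes the interpolation error \(e\coloneqq Iu-u\) have vanishing piecewise integral means, \(\Pi_0 e=0\). This lets the difference \(\|Iu\|_V^2-\|u\|_V^2\) reduce simplex by simplex to a difference of oscillations, \((1-\Pi_0)Iu\) versus \((1-\Pi_0)u\). The oscillation of \(Iu\) is then controlled by the Poincar\'e inequality, which produces \(\delta'\) rather than \(\widetilde\delta\).

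\textbf{Step 1 (local identity).} Fix \(T\in\Tcal\) and write \(V_T\coloneqq V|_T\ge 0\), which is constant. Elementary algebra gives
\[
\|Iu\|_{V,T}^2-\|u\|_{V,T}^2=V_T\big(\|e\|_{L^2(T)}^2+2\,(e,u)_{L^2(T)}\big).
\]
Since \(\Pi_0 e=0\) on \(T\) by (I3), we have \((e,u)_{L^2(T)}=(e,(1-\Pi_0)u)_{L^2(T)}\). Completing the square then yields
\[
V_T\big(\|e+(1-\Pi_0)u\|_{L^2(T)}^2-\|(1-\Pi_0)u\|_{L^2(T)}^2\big).
\]
Again because \(\Pi_0 e=0\), the first term is \(e+(1-\Pi_0)u=(1-\Pi_0)(e+u)=(1-\Pi_0)Iu\). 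Hence
\[
\|Iu\|_{V,T}^2-\|u\|_{V,T}^2=V_T\big(\|(1-\Pi_0)Iu\|_{L^2(T)}^2-\|(1-\Pi_0)u\|_{L^2(T)}^2\big)\le V_T\,\|(1-\Pi_0)Iu\|_{L^2(T)}^2 .
\]

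\textbf{Step 2 (Poincar\'e).} Since \(Iu|_T\in P_2(T)\subset H^1(T)\), the Poincar\'e inequality on \(T\) gives \(\|(1-\Pi_0)Iu\|_{L^2(T)}\le C_P h_T\Vvert{Iu}_T\). Summing over \(T\in\Tcal\) with \(V_T C_P^2h_T^2\le\delta'\) shows
\[
\|Iu\|_V^2-\|u\|_V^2\le\delta'\,\Vvert{Iu}_\pw^2 .
\]

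\textbf{Step 3 (Pythagoras and \eqref{eq:S12}).} The orthogonality (I1) gives the Pythagoras identity \(\Vvert{Iu}_\pw^2=\Vvert{u}^2-\Vvert{u-Iu}_\pw^2\). Moreover \(\Vvert{u}^2\le\lambda-\|u\|_V^2\le\lambda\) by \eqref{eq:S12}. Combining these,
\[
\|Iu\|_V^2-\|u\|_V^2\le\delta'\lambda-\delta'\Vvert{u-Iu}_\pw^2,
\]
which is \eqref{eq:upper} with \(b=\delta'=-c\) (here \(P=\mathrm{id}\)).

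The only delicate point is Step 1, the observation that the cross term and the error term combine into the oscillation of \(Iu\). This relies essentially on \(V\) being constant on each \(T\), so that \(V\) can be pulled out and the orthogonality \(\Pi_0 e=0\) used. Once that is in place, the rest is routine.
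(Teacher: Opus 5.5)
Your proof is correct and follows essentially the same route as the paper. The paper reaches your Step 1 identity in one line, from the Pythagoras identity \(\|Iu\|_{L^2(T)}^2=\|\Pi_0 u\|_{L^2(T)}^2+\|(1-\Pi_0)Iu\|_{L^2(T)}^2\) (using (I3)) and \(\|\Pi_0 u\|_{L^2(T)}\le\|u\|_{L^2(T)}\), rather than by completing the square in \(e=Iu-u\); it then uses the same local Poincar\'e inequality weighted by \(V|_T\) to get \(\delta'\Vvert{Iu}_\pw^2\), and the Pythagoras identity from (I1) with \eqref{eq:S12}.
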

\begin{proof}
    The Pythagoras identity \(\|\Pi_0 Iu\|_{L^2(T)}^2 + \|(1-\Pi_0)Iu\|_{L^2(T)}^2
    =  \|Iu\|_{L^2(T)}^2\) for \(T \in \Tcal\), \(\Pi_0 Iu = \Pi_0 u\) from (I3),
    and \(\|\Pi_0u\|_{L^2(T)}^2 = \|u\|_{L^2(T)}^2 - \|u-\Pi_0u\|_{L^2(T)}^2 \leq
    \|u\|_{L^2(T)}^2\) imply
    \[
        \|Iu\|_{L^2(T)}^2
        =       \|\Pi_0 Iu\|_{L^2(T)}^2 + \|(1-\Pi_0)Iu\|_{L^2(T)}^2
        \leq    \|u\|_{L^2(T)}^2 + C_P^2 h_T^2 \, \Vvert{Iu}_T^2
    \]
    with a Poincar\'e inequality in the last step. Since \(V \in P_0(\Tcal)\), this
    leads to the global estimate \(\|Iu\|_V^2 - \|u\|_V^2 \leq \delta' \,\Vvert{Iu}
    _\pw^2\). The orthogonality (I1) and \eqref{eq:S12} establish \(\Vvert{Iu}_\pw^
    2 = \Vvert{u}^2 - \Vvert{u-Iu}_\pw^2 \leq \lambda - \Vvert{u-Iu}_\pw^2 - \|u\|_
    V^2\), whence
    \[
        \Vert Iu \Vert_V^2 - \Vert u \Vert_V^2
        \leq    \delta' \, \Vvert{Iu}_\pw^2
        \leq    \delta' \lambda - \delta' \Vvert{u-Iu}_\pw^2 - \delta' \Vert u \Vert_V^2
        \leq    \delta' \lambda - \delta' \Vvert{u-Iu}_\pw^2.
        \qedhere
    \]
\end{proof}

\begin{lem}[\(P = \Pi_0\), (I3), \(V \in P_0(\Tcal)\)] \label{lem:4.9}
    Suppose (I1)--(I3), \(P = \Pi_0\), and a piecewise constant potential
    \(V \in P_0(\Tcal)\). Then \eqref{eq:upper} holds with \(b = 0 = c\).
\end{lem}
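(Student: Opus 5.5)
We have to show \(\|\Pi_0 Iu\|_V^2 - \|u\|_V^2 \leq 0\), which is \eqref{eq:upper} with \(b = 0 = c\). The argument is local, simplex by simplex.

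Fix \(T\in\Tcal\) and let \(V|_T \equiv V_T \geq 0\) denote the constant value of the potential on \(T\); the sign holds since \(\essinf_\Omega V = 0\). Because \(V\) is constant on \(T\),
\[
\|\Pi_0 Iu\|_{V,T}^2 = V_T \, \|\Pi_0 Iu\|_{L^2(T)}^2 .
\]
The annihilation property (I3) gives \(\Pi_0 Iu = \Pi_0 u\), so this quantity equals \(V_T\,\|\Pi_0 u\|_{L^2(T)}^2\).

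The \(L^2\) projection is a contraction, as in the proof of Lemma~\ref{lem:4.8}: the Pythagoras identity gives
\[
\|\Pi_0 u\|_{L^2(T)}^2 = \|u\|_{L^2(T)}^2 - \|(1-\Pi_0)u\|_{L^2(T)}^2 \leq \|u\|_{L^2(T)}^2 .
\]
Multiplying by \(V_T \geq 0\) yields
\[
\|\Pi_0 Iu\|_{V,T}^2 \leq V_T\,\|u\|_{L^2(T)}^2 = \|u\|_{V,T}^2 .
\]

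Summing over all \(T\in\Tcal\) gives \(\|PIu\|_V^2 - \|u\|_V^2 \leq 0 = 0\cdot\lambda + 0\cdot\Vvert{u-Iu}_\pw^2\), which is the claim.

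None of the steps is difficult. The only points that need care are the nonnegativity of \(V\), which comes from the normalisation \(\essinf_\Omega V = 0\), and the use of \(V\in P_0(\Tcal)\) to pull \(V\) out of the integral on each simplex. Neither (I1) nor the eigenvalue bound \eqref{eq:S12} is needed.
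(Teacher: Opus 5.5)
Your proof is correct and matches the paper's argument: (I3) replaces \(\Pi_0 Iu\) by \(\Pi_0 u\), the elementwise Pythagoras identity bounds \(\|\Pi_0 u\|_{L^2(T)}\) by \(\|u\|_{L^2(T)}\), and summing with the constant values \(V|_T\) gives \eqref{eq:key}. You also state that \(V|_T \geq 0\) follows from the normalisation \(\essinf_\Omega V = 0\); the paper uses this step without comment.
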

\begin{proof}
    Since \(\Pi_0Iu = \Pi_0u\) by (I3) and \(\|\Pi_0u\|_{L^2(T)}^2 = \|u\|_
    {L^2(T)}^2 - \|u-\Pi_0 u\|_{L^2(T)}^2 \leq~\|u\|_{L^2(T)}^2\), it holds
    that
    \begin{equation}
        \|\Pi_0 Iu\|_V^2
        =       \|\Pi_0 u\|_V^2
        =       \sum_{T \in \Tcal} V|_T \|\Pi_0u\|_{L^2(T)}^2
        \leq    \sum_{T \in \Tcal} V|_T \|u\|_{L^2(T)}^2
        =       \|u\|_V^2,
        \label{eq:key}
    \end{equation}
    whence \(\|\Pi_0 Iu\|_V^2 - \|u\|_V^2 \leq 0\). This concludes the proof
    of \eqref{eq:upper} for \(b = 0 = c\).
\end{proof}

%%% Finish %%%%%%%%%%%%%%%%%%%%%%%%%%%%%%%%%%%%%%%%%%%%%%%%%%%%%%%%%%%%%%%%%%%%%%%%%%%%%%%%%%%%%%%%%%%%%
\subsection{Finish of the proof of the GLB} \label{sec:4.5}
The simultaneous proof of the GLB combines the lower bound \eqref{eq:lower}, the
upper bound \eqref{eq:upper}, and  \eqref{eq:S23} for the key inequality
\[
    \big(1 - at - t(1-a)\varepsilon'\lambda\big) \, \lambda_h + \big(1 - c -
    (t^{-1}-1) \, \widetilde{\varepsilon} \lambda_h  \big)  \, \Vvert{u-Iu}_\pw^2
    \leq    (1 + b) \, \lambda
    \tag{K}
    \label{eq:KI}
\]
for any \(0 < t < 1\). The choice
\[
    0 < t \coloneqq \frac{\widetilde{\varepsilon}\lambda_h}{1 - c + \widetilde{\varepsilon}\lambda_h} < 1
\]
leads to a vanishing prefactor of \(\Vvert{u-Iu}_\pw^2\) in \eqref{eq:KI} and so reveals
\begin{equation}
    \GLB = \frac{(1-at) \, \lambda_h}{1 + b + t (1-a) \varepsilon' \lambda_h} \leq \lambda.
    \label{eq:End_mCR}
\end{equation}
The coefficients in Table \ref{tab:const} conclude the proof of Theorems \ref{thm:GLB_eCR}
and \ref{thm:GLB_mCR}. Lemma \ref{lem:4.7} holds for any \(0 < s < 1\). The constants in
Table \ref{tab:const} lead for the CR EVP in \eqref{eq:End_mCR} to
\[
    \GLB_\CR
    =   \frac{\lambda_\CR}{\big(1 + \frac{\delta}{s}\big) \, \big(1 + \frac{\varepsilon\lambda_\CR}{
        1 + \frac{\delta}{s} - \delta - s}\big)}
    =   \frac{\lambda_\CR}{1 + \frac{\delta}{s} + \frac{\varepsilon \lambda_\CR}{1-s}}
    \leq \lambda.
\]
The lower bound attains its maximum at \(s \coloneqq \delta/(\delta + \sqrt{\delta\varepsilon
\lambda_\CR})\) and reads
\begin{equation}
    \GLB_\CR = \frac{\lambda_\CR}{1 + \delta + \varepsilon\lambda_\CR +
    2\sqrt{\varepsilon\delta\lambda_\CR}} \leq \lambda.
    \label{eq:End_CR}
\end{equation}
This concludes the proof of Theorem \ref{thm:GLB_CR}. The constants in Table \ref{tab:const}
reveal for the eCR EVP in \eqref{eq:End_mCR} that 
\begin{equation}
    \GLB_\eCR^{(s)} = \max_{0 < s < 1} \frac{\lambda_\eCR}{1 + \frac{\delta'}{s} + \frac{(\varepsilon')^2
    \lambda_\eCR^2}{1 + \frac{\delta'}{s} - \delta' - s + \varepsilon'\lambda_\eCR}} \leq \lambda.
    \label{eq:End_eCR}
\end{equation}
This concludes the proof of Theorem \ref{thm:GLB_eCR_1}. \qed

\begin{rem}[convergence of \(\gamma_h\)] \label{rem:Conv4pp}
    Note that every post-processing \(\gamma_h\) in Table \ref{tab:1} satisfies
    \(\gamma_h = \mathcal{O}(h_{\max}^2)\) independent of the regularity of the
    continuous (resp.\ discrete) eigenfunction \(u\) in \eqref{eq:SEVP}. Consequently,
    any GLB in Table \ref{tab:1} convergences with at least the same rate to \(\lambda
    \) as the respective discrete eigenvalue \(\lambda_h\) for uniform mesh-refinement.
\end{rem}

%%% Extra-stabilisation %%%%%%%%%%%%%%%%%%%%%%%%%%%%%%%%%%%%%%%%%%%%%%%%%%%%%%%%%%%%%%%%%%%%%%%%%%%%%%%%%%
\section{Direct GLB via extra-stabilisation} \label{sec:5}
An extra-stabilisation of Crouzeix-Raviart and Morley functions leads to direct GLB for
the (bi-)Laplacian in \cite{CP23,CP24}. This paper presents a first extra-stabilisation
of enriched Crouzeix-Raviart functions for \(P = \Pi_0\) and the first direct GLB.

%%% Extra-stabilised mCR scheme %%%%%%%%%%%%%%%%%%%%%%%%%%%%%%%%%%%%%%%%%%%%%%%%%%%%%%%%%%%%%%%%%%%%%%%%%%
\subsection{Extra-stabilised mCR and main result} \label{sec:5.1}
Consider the space \(\V_\es \coloneqq \V_\pw \times \V_\nc\) with
\begin{equation}
    \V_\nc \coloneqq \eCR_0^1(\Tcal) = \CR_0^1(\Tcal) \oplus B(\Tcal)
    \subset P_1(\Tcal) \oplus B(\Tcal) \eqqcolon \V_\pw.
    \label{eq:Ves}
\end{equation}
The extra-stabilised modified Crouzeix-Raviart (sCR) EVP seeks algebraic
eigenpairs \((\lambda_\es,u_\es) \in \R_+ \times \V_\es \setminus \{0\}\)
such that
\begin{equation}
    a_\es(u_\es,v_\es) = \lambda_\es \, b_\es(u_\es,v_\es)
    \quad\textup{for all } v_\es \in \V_\es.
    \label{eq:sCR}
\end{equation}
For \(u_\es = (u_\pw,u_\nc)\), \(v_\es = (v_\pw,v_\nc) \in \V_\es\), the
semi-scalar products in \eqref{eq:sCR} read
\begin{align*}
    a_\es(u_\es,v_\es)
    &=  a_\pw(u_\nc,v_\nc)
        + (\Pi_0 u_\nc, \Pi_0 v_\nc)_V
        + \kappa_\eCR^{-2} \big(h_\Tcal^{-2}(u_\pw-u_\nc), v_\pw-v_\nc\big)_\Omega, \\
    b_\es(u_\es,v_\es)
    &=  b(u_\pw,v_\pw).
\end{align*}
Since \((\V_\es,a_\es)\) is a Hilbert space and \(b_\es(\bullet,\bullet)\) is a semi-scalar
product with kernel \(\{0\} \times \eCR_0^1(\Tcal)\), the algebraic eigenvalue problem
\eqref{eq:sCR} has \(N = \dim\V_\pw = \dim P_1(\Tcal) + \dim B(\Tcal) = (n+2) |\Tcal|
\) finite and positive eigenvalues \(0 < \lambda_{\es(1)} \leq \dots \leq \lambda_{\es
(N)} <\infty\), while \(\lambda_{\es(N+1)} = \lambda_{\es(N+2)} = \dots = +\infty\). The
sCR EVP utilises more degrees of freedom than the mCR EVP, but Figures \ref{fig:Lshape_1_harm}
and \ref{fig:anderson} clearly demonstrate the necessity of an alternative to the post-processed
GLB from Section \ref{sec:3}. The new method \eqref{eq:sCR} directly computes GLB for any space
dimension \(n \geq 2\).

\begin{rem}
    The extra-stabilisation of CR and eCR EVP for the Schr\"odinger eigenvalue problem
    \eqref{eq:SEVP} is possible, but the upper bounds on \(\|PIu\|_V^2 - \|u\|_V^2\)
    from Subsection \ref{sec:4.4} prevent direct GLB. For the mCR EVP, however, \(\|
    \Pi_0 I_\eCR u\|_V^2 - \|u\|_V^2 \leq 0\) from Lemma \ref{lem:4.9} holds and leads
    to amazing direct GLB from the sCR EVP.
\end{rem}

\begin{thm}[\(V \in P_0(\Tcal)\)] \label{thm:dGLB}
    The \(k\)-th exact eigenvalue \(\lambda = \lambda_k\) in \eqref{eq:SEVP} and the \(k\)-th
    positive sCR eigenvalue \(\lambda_\es=\lambda_{\es(k)}\) in \eqref{eq:sCR} for the same
    number \(k \in \N\) satisfy for \(\varepsilon'' = \kappa_\eCR^2 h_{\max}^2\), \((\bullet)
    _+ \coloneqq \max\{0,\bullet\}\), and a piecewise constant potential \(V \in P_0(\Tcal)\)
    that
    \[
        \GLB_\es \coloneqq \frac{\lambda_\es}{1 + (\varepsilon''\lambda_\es-1)_+} \leq \lambda.
    \]
\end{thm}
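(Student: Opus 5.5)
The plan is to follow the min-max strategy of Subsection~\ref{sec:4.2}, now with a test space built in \(\V_\es\) from the exact eigenfunctions. Let \(E(k)=\linh\{u_1,\dots,u_k\}\) be as in Subsection~\ref{sec:4.2} and let \(I=I_\eCR\) satisfy (I1)--(I3). Write \(\Pi_\pw\) for the elementwise \(L^2\) projection onto \(\V_\pw=P_1(\Tcal)\oplus B(\Tcal)\). Define the linear map \(J:E(k)\to\V_\es\) by \(Ju\coloneqq(\Pi_\pw u, Iu)\). For \(u\in E(k)\) set \(e\coloneqq u-Iu\). Since \(Iu\in\V_\nc\subset\V_\pw\), the key identities are \(\Pi_\pw u-Iu=\Pi_\pw e\) and \(u-\Pi_\pw u=(1-\Pi_\pw)e\). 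Elementwise orthogonality then gives \(\|e\|_T^2=\|\Pi_\pw e\|_T^2+\|(1-\Pi_\pw)e\|_T^2\) and \(\|u\|^2=\|\Pi_\pw u\|^2+\|(1-\Pi_\pw)e\|^2\).

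The central estimate bounds \(a_\es(Ju,Ju)\).
\begin{itemize}
\item By Lemma~\ref{lem:4.9} (this is where \(V\in P_0(\Tcal)\) enters), \(\|\Pi_0Iu\|_V^2\le\|u\|_V^2\).
\item By (I1), \(\Vvert{Iu}_\pw^2=\Vvert{u}^2-\Vvert{e}_\pw^2\).
\item By \eqref{eq:S12}, \(\Vvert{u}^2+\|u\|_V^2\le\lambda\|u\|^2\) on \(E(k)\).
\end{itemize}
Together these yield
\[
a_\es(Ju,Ju)\le \lambda\|u\|^2-\Vvert{e}_\pw^2+\kappa_\eCR^{-2}\|h_\Tcal^{-1}\Pi_\pw e\|^2 .
\]
Now (I2) in the form \(\Vvert{e}_T^2\ge\kappa_\eCR^{-2}h_T^{-2}\|e\|_T^2\), split via the elementwise Pythagoras identity, cancels the stabilisation term. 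This leaves
\[
a_\es(Ju,Ju)\le\lambda\|u\|^2-\sum_{T\in\Tcal}\kappa_\eCR^{-2}h_T^{-2}\|(1-\Pi_\pw)e\|_T^2 .
\]
This local cancellation is the reason for the choice of the weight \(\kappa_\eCR^{-2}h_\Tcal^{-2}\), and no \(h_{\max}\) appears at this stage.

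Next I would distinguish two cases as in Subsection~\ref{sec:4.2}, depending on whether \(b_\es\) is positive on \(J(E(k))\). In the first case there is a \(u\in E(k)\) with \(\|u\|=1\) and \(\Pi_\pw u=0\). Then \(\|(1-\Pi_\pw)e\|=1\), and the display together with \(a_\es\ge0\) gives \(0\le\lambda-\|(1-\Pi_\pw)e\|^2/\varepsilon''\), so \(1/\varepsilon''\le\lambda\). Elementary algebra shows \(\GLB_\es\le1/\varepsilon''\). Indeed, if \(\varepsilon''\lambda_\es\le1\), then \(\GLB_\es=\lambda_\es\le1/\varepsilon''\). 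Otherwise \(\GLB_\es=\lambda_\es/(\varepsilon''\lambda_\es)=1/\varepsilon''\).

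In the second case \(J(E(k))\) is \(k\)-dimensional and \(b_\es\) is positive on it. The algebraic min-max principle for \eqref{eq:sCR} then provides a \(u\in E(k)\) with \(\|u\|=1\) and \(\lambda_\es\|\Pi_\pw u\|^2\le a_\es(Ju,Ju)\). Inserting \(\|\Pi_\pw u\|^2=1-\|(1-\Pi_\pw)e\|^2\) and the bound above leads to
\[
\lambda_\es\le\lambda+\sum_{T\in\Tcal}\big(\lambda_\es-\kappa_\eCR^{-2}h_T^{-2}\big)\|(1-\Pi_\pw)e\|_T^2 .
\]
If \(\varepsilon''\lambda_\es\le1\), every coefficient is nonpositive and \(\lambda_\es\le\lambda\) follows. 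Otherwise I would estimate \(\|(1-\Pi_\pw)e\|_T^2\le\|e\|_T^2\le\kappa_\eCR^2h_T^2\Vvert{e}_T^2\) by (I2). This bounds each summand by \((\varepsilon''\lambda_\es-1)_+\Vvert{e}_T^2\), using that the coefficient \((\lambda_\es\kappa_\eCR^2h_T^2-1)\) is at most \((\varepsilon''\lambda_\es-1)_+\). Finally \(\Vvert{e}_\pw^2\le\Vvert{u}^2\le\lambda\) by (I1) and \eqref{eq:S12}. The result is \(\lambda_\es\le\big(1+(\varepsilon''\lambda_\es-1)_+\big)\lambda\), which is the claim.

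I expect the main obstacle to be the central estimate for \(a_\es(Ju,Ju)\). One must pick the first component \(\Pi_\pw u\), rather than \(u\) itself or \(Iu\), so that two things happen at once. First, the stabilisation term is absorbed by the discarded \(\Vvert{e}_\pw^2\). Second, the loss \(\|u\|^2-\|\Pi_\pw u\|^2\) in the \(b_\es\)-norm is controlled by the same local quantities \(\|(1-\Pi_\pw)e\|_T\). The bookkeeping with local rather than global mesh-sizes needs care. A further point to check is that the bilinear-form version of the min-max principle applies with the semi-scalar product \(b_\es\), which is why the first case is treated separately.
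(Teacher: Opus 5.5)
Your proposal is correct and follows the paper's proof. It uses the same case split on whether \(\Pi_\pw u_1,\dots,\Pi_\pw u_k\) are linearly independent, the same test space \(\linh\{(\Pi_\pw u_j, I_\eCR u_j)\}\), and the same cancellation of the stabilisation term via (I1), (I2), Lemma~\ref{lem:4.9} and \eqref{eq:S12}. The differences are only cosmetic: you keep the local weights \(h_T\) elementwise and pass to \(h_{\max}\) at the end, where on elements with \(\lambda_\es<\kappa_\eCR^{-2}h_T^{-2}\) the summand is simply nonpositive, and in Case~1 you obtain \(1/\varepsilon''\le\lambda\) from your central estimate and \(a_\es\ge 0\) rather than from \(\|u-\Pi_\pw u\|\le\|u-I_\eCR u\|\).
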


\begin{proof}
    Let \(u_1, \dots, u_k\) denote the first \(k \in \N\) eigenfunctions of \eqref{eq:SEVP} and
    abbreviate the \(L^2\)-orthogonal projection onto \(\V_\pw\) by \(\Pi_\pw : \V \to \V_\pw\).
    \\
    \\
    \textbf{Case 1.} If \(\Pi_\pw u_1, \dots, \Pi_\pw u_k\) are linearly dependent, there exists
    some \(u\in E(k) \equiv \linh\{u_1, \dots, u_k\}\) such that \(\|u\| = 1\) but \(\Pi_\pw u =
    0\). These properties of \(u\), \(\V_\nc \subset \V_\pw\), and (I2) for \(I \coloneqq I_\eCR
    \) reveal
    \begin{equation}
        1
        =       \|u\|^2
        =       \|u-\Pi_\pw u\|^2
        \leq    \|u-Iu\|^2
        \leq    \varepsilon'' \, \Vvert{u-Iu}_\pw^2
        \leq    \varepsilon'' \lambda
        \label{eq:5.1}
    \end{equation}
    with the Pythagoras identity from (I1) and \(\Vvert{u}^2 \leq \lambda \) from \eqref{eq:S12}
    in the last step. A case distinction completes the proof in Case 1. If \(\lambda_\es \leq 
    \lambda\), then
    \[
        \GLB_\es
        =       \frac{\lambda_\es}{1 + (\varepsilon''\lambda_\es-1)_+}
        \leq    \lambda_\es
        \leq    \lambda.
    \]
    Else if \(\lambda < \lambda_\es\), then \eqref{eq:5.1} provides \(1 \leq \varepsilon'' \lambda
    < \varepsilon'' \lambda_\es\), whence \(1/\varepsilon'' \leq \lambda\) reveals
    \[
        \GLB_\es
        =       \frac{\lambda_\es}{1 + (\varepsilon''\lambda_\es-1)_+}
        =       \frac{\lambda_\es}{\varepsilon'' \lambda_\es}
        =       \frac{1}{\varepsilon''}
        \leq    \lambda.
    \]
    The Case 1 holds in particular for \(k \geq \dim\V_\pw+1\), where the limit \(\lambda_\es \to
    \infty\) leads to a valid guaranteed lower bound \(\GLB_\es = 1/\varepsilon'' \leq \lambda\).
    \\
    \\
    \textbf{Case 2.} Suppose that \(\Pi_\pw u_1, \dots, \Pi_\pw u_k\) are linearly independent
    and define \(E_\es(k) \coloneqq \linh\{(\Pi_\pw u_1,Iu_1), \dots, (\Pi_\pw u_k, Iu_k)\}\).
    The min-max principle for \eqref{eq:sCR} reveals
    \begin{equation}
        \lambda_\es
        \leq    \max_{v_\es \in E_\es(k) \setminus \{0\}}
                \frac{a_\es(v_\es,v_\es)}{b_\es(v_\es,v_\es)}.
        \label{eq:5.2}
    \end{equation}
    Select a maximizer \(v_\es = (\Pi_\pw u, Iu) \in E_\es(k) \setminus\{0\}\) with \(u
    \in E(k)\) and \(\|u\| = 1\). Inequality \eqref{eq:5.2} ensures that
    \begin{equation}
        \begin{aligned}
            \lambda_\es \, \|\Pi_\pw u\|^2
            &=      \lambda_\es \, b_\es(v_\es,v_\es) \leq a_\es(v_\es,v_\es) \\
            &=      \Vvert{Iu}_\pw^2 + \|\Pi_0 Iu\|_V^2 
                    + \kappa_\eCR^{-2} \, \|h_\Tcal^{-1} (\Pi_\pw - I) u\|^2.
        \end{aligned}
        \label{eq:5.3}
    \end{equation}
    A Pythagoras identity and (I2) reveal
    \begin{equation}
        \begin{aligned}
            \Vert h_\Tcal^{-1} (\Pi_\pw - I) u \Vert^2
            &=      \Vert h_\Tcal^{-1} (1 - I) u \Vert^2 - \Vert h_\Tcal^{-1} (1 - \Pi_\pw) u \Vert^2 \\
            &\leq   \Vert h_\Tcal^{-1} (1 - I) u \Vert^2 - h_{\max}^{-2} \Vert (1 - \Pi_\pw) u \Vert^2 \\
            &\leq   \kappa_\eCR^2 \vvvert (1-I)u \vvvert_\pw^2 - h_{\max}^{-2} \Vert (1 - \Pi_\pw) u \Vert^2.
        \end{aligned}
        \label{eq:5.4}
    \end{equation}
    The combination of \eqref{eq:5.3}, \eqref{eq:5.4}, and \(\varepsilon''
    = \kappa_\eCR^2 h_{\max}^2\) result in
    \[
        \lambda_\es \|\Pi_\pw u\|^2
        \leq    \Vvert{Iu}_\pw^2 + \|\Pi_0 Iu\|_V^2
                + \Vvert{u-Iu}_\pw^2 - \varepsilon''^{-1} \|u-\Pi_\pw u\|^2.
    \]
    The Pythagoras identity \(\Vvert{Iu}_\pw^2 = \Vvert{u}^2 - \Vvert{u-Iu}_\pw^2\) from (I1)
    and \(\Vvert{u}^2 \leq \lambda - \|u\|_V^2\) from \eqref{eq:S12} provide
    \begin{align*}
        \lambda_\es \|\Pi_\pw u\|^2
        &\leq   \lambda + \|\Pi_0 Iu\|_V^2 - \|u\|_V^2 - \kappa_\eCR^{-2} h_{\max}^{-2} \|u-\Pi_\pw u\|^2 \\
        &\leq   \lambda - \kappa_\eCR^{-2} h_{\max}^{-2} \|u-\Pi_\pw u\|^2
    \end{align*}
    with \(\|\Pi_0 I_\eCR u\|_V^2 - \|u\|_V^2 \leq 0\) from Lemma \ref{lem:4.9} in the last step. 
    The Pythagoras identity \(\|\Pi_\pw u\|^2 = \|u\|^2 - \|u-\Pi_\pw u\|^2\), \(\|u\|= 1\), and
    \(\|u-\Pi_\pw u\|^2 \leq \|u-Iu\|^2 \leq \varepsilon'' \Vvert{u-Iu}_\pw^2\) by \(\V_\nc \subset
    \V_\pw\) result in
    \begin{equation*}
        \lambda_\es - \lambda
        \leq    (\lambda_\es - \kappa_\eCR^{-2} h_{\max}^{-2})_+ \, \|u-\Pi_\pw u\|^2
        \leq    (\varepsilon'' \lambda_\es - 1)_+ \, \Vvert{u}^2.
    \end{equation*}
    Since \(\Vvert{u}^2 \leq \lambda\) from \eqref{eq:S12}, it holds \(\lambda_\es - \lambda \leq
    (\varepsilon'' \lambda_\es - 1)_+ \lambda\). A re-arrangement concludes the proof in Case 2.
\end{proof}

\begin{rem}[comparison with mCR on uniform meshes] \label{rem:comp}
    Direct calculations in the particular case that the mesh-size \(h_T\) equals \(h_{\max}\) for
    any simplex \(T \in\Tcal\) and for \(\varepsilon''\lambda_\es < 1\) provide that the \(k\)-th
    sCR eigenpair \((\lambda_\es, u_\es) \in \R_+ \times \V_\es\) with nonconforming component \(
    u_\nc \in \V_\nc=\eCR_0^1(\Tcal)\) in \(u_\es = (u_\pw,u_\nc)\) satisfies the rational EVP
    \begin{equation}
        a_\pw(u_\nc,v_\nc) + (\Pi_0u_\nc,\Pi_0v_\nc)_V
        =   \lambda_\es \, b\Big(\frac{u_\nc}{1 - \lambda_\es \kappa_\eCR^2 h_\Tcal^2},v_\nc\Big)
        \quad\textup{for all } v_\nc \in \V_\nc.
        \label{eq:rational}
    \end{equation}
    The details are analogous to \cite[Section 2.3]{CP23} and hence omitted. More important is
    the conclusion from this relation: In comparison of \eqref{eq:rational} with the mCR EVP
    \eqref{eq:dSEVP} we infer as in \cite[Theorem 4.6]{CZZ20} that \(\GLB_\es = \lambda_\es =
    \lambda_\mCR/(1 + \varepsilon''\lambda_\mCR) = \GLB_\mathrm{CECR}\). Remark \ref{rem:CECR}
    provides \(\GLB_\mathrm{CECR} < \GLB_\mCR\) in all experiments from Section \ref{sec:7}
    (except the coarsest mesh in Figure \ref{fig:Square_20_harm}). This explains that \(\GLB_
    \es\) is inferior to \(\GLB_\mCR\) for uniform mesh-refinement in the numerical examples
    of Section \ref{sec:7}.
\end{rem}

\begin{rem}[convergence sCR]
    The numerical experiments in Section \ref{sec:7} support the conjecture of optimal a-priori
    convergence rates for the sCR scheme \eqref{eq:sCR} as well as the optimal convergence of
    AFEM for the direct GLB in Theorem \ref{thm:dGLB}. Future research shall clarify whether
    the arguments from \cite{CP23,CP24} can be generalised to \eqref{eq:sCR} for the Schrödinger
    EVP \eqref{eq:SEVP}.
\end{rem}

%%% Scalar diffusion %%%%%%%%%%%%%%%%%%%%%%%%%%%%%%%%%%%%%%%%%%%%%%%%%%%%%%%%%%%%%%%%%%%%%%%%%%%%%%%%%%%%%
\subsection{Extension for piecewise constant diffusion} \label{sec:5.2}
Let \(\alpha \in P_0(\Tcal)\) be a positive piecewise constant diffusion coefficient with \(0 <
\underline{\alpha} \leq \alpha \leq \overline{\alpha} < \infty\) in the symmetric second-order
eigenvalue problem
\begin{equation}
    -\div(\alpha \nabla u) + Vu = \lambda u \textup{ in } \Omega
    \qquad\textup{and}\qquad
    u = 0 \textup{ on } \partial\Omega.
    \label{eq:GEVP}
\end{equation}
The weak form consider the weighted energy scalar product
\[
    a_\alpha(u,v) \coloneqq \int_\Omega \alpha \, \nabla u \cdot \nabla v \dx
\]
with induced norm \(\Vvert{\bullet}_\alpha \coloneqq a_\alpha(\bullet,\bullet)^{1/2}\). The enriched
Crouzeix-Raviart interpolation \(I \coloneqq I_\eCR\) satisfies the orthogonality (I1) verbatim for
\(a_\alpha(\bullet,\bullet)\), while the estimate (I2) becomes
\[
    \|u-Iu\|
    \leq    h_{\max} \kappa_\eCR \, \Vvert{u-Iu}_{\pw}
    \leq    h_{\max} \kappa_\eCR \underline{\alpha}^{-1/2} \, \Vvert{u-Iu}_{\alpha,\pw}.
\]
Replacing \(\kappa_\eCR\) by \(\kappa_\eCR \underline{\alpha}\) in the extra-stabilisation
\eqref{eq:sCR} leads to the following.

\begin{thm} \label{thm:5.1}
    The \(k\)-th exact eigenvalue \(\lambda = \lambda_k\) in \eqref{eq:GEVP} and the \(k\)-th
    sCR eigenvalue \(\lambda_\es=\lambda_{\es(k)}\) for the same number \(k \in \N\) satisfy
    for piecewise constant \(\alpha,V\in P_0(\Tcal)\) and \(\varepsilon'' = h_{\max}^2 \kappa
    _\eCR^2\) that
    \vspace*{-0.4cm} 
    \[
        \GLB \coloneqq \frac{\lambda_\es}{1 + (\varepsilon''\lambda_\es\underline{\alpha}^{-1}-1)_+}
        \leq \lambda.
    \]
\end{thm}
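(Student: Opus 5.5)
The plan is to repeat the proof of Theorem~\ref{thm:dGLB} line by line, with \(a_\pw\) replaced by the weighted piecewise form \(a_{\alpha,\pw}(\bullet,\bullet)\coloneqq\sum_{T\in\Tcal}\int_T\alpha\,\nabla\bullet\cdot\nabla\bullet\dx\) and \(\kappa_\eCR^{-2}\) replaced by \(\kappa_\eCR^{-2}\underline{\alpha}\). Concretely, I read the modified sCR scheme as \eqref{eq:sCR} with
\[
a_\es(u_\es,v_\es)=a_{\alpha,\pw}(u_\nc,v_\nc)+(\Pi_0u_\nc,\Pi_0v_\nc)_V+\kappa_\eCR^{-2}\underline{\alpha}\,\big(h_\Tcal^{-2}(u_\pw-u_\nc),v_\pw-v_\nc\big)_\Omega .
\]

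\textbf{Preliminaries.} I first record the weighted analogues of the tools from Section~\ref{sec:4}.
\begin{itemize}
\item The exact eigenfunctions \(u_1,\dots,u_k\) are \(a_\alpha+(\bullet,\bullet)_V\) orthogonal. Hence every \(u\in E(k)\) with \(\|u\|=1\) satisfies \(\Vvert{u}_\alpha^2+\|u\|_V^2\le\lambda\), exactly as in \eqref{eq:S12}.
\item The orthogonality (I1) holds for \(a_{\alpha,\pw}\), as asserted before the theorem. It yields the Pythagoras identity \(\Vvert{Iu}_{\alpha,\pw}^2=\Vvert{u}_\alpha^2-\Vvert{u-Iu}_{\alpha,\pw}^2\).
\item The weighted form of (I2) reads \(\|h_\Tcal^{-1}(u-Iu)\|^2\le\kappa_\eCR^2\underline{\alpha}^{-1}\Vvert{u-Iu}_{\alpha,\pw}^2\). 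It follows elementwise from (I2) and \(\alpha|_T\ge\underline{\alpha}\).
\item Lemma~\ref{lem:4.9} uses only (I3) and \(V\in P_0(\Tcal)\), so it applies unchanged and gives \(\|\Pi_0Iu\|_V^2\le\|u\|_V^2\).
\end{itemize}

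\textbf{Case 1.} Suppose \(\Pi_\pw u_1,\dots,\Pi_\pw u_k\) are linearly dependent. Then some \(u\in E(k)\) with \(\|u\|=1\) satisfies \(\Pi_\pw u=0\), and
\[
1=\|u-\Pi_\pw u\|^2\le\|u-Iu\|^2\le\varepsilon''\underline{\alpha}^{-1}\Vvert{u-Iu}_{\alpha,\pw}^2\le\varepsilon''\underline{\alpha}^{-1}\lambda .
\]
The same case distinction as before finishes this case. If \(\lambda_\es\le\lambda\), the bound is trivial because \(\GLB\le\lambda_\es\). Otherwise \(\varepsilon''\lambda_\es\underline{\alpha}^{-1}>1\), so \(\GLB=\underline{\alpha}/\varepsilon''\le\lambda\).

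\textbf{Case 2.} Suppose the projections are linearly independent. The min-max principle applied to \(E_\es(k)\) gives a maximiser \((\Pi_\pw u,Iu)\) with \(u\in E(k)\) and \(\|u\|=1\). It satisfies
\[
\lambda_\es\|\Pi_\pw u\|^2\le\Vvert{Iu}_{\alpha,\pw}^2+\|\Pi_0Iu\|_V^2+\kappa_\eCR^{-2}\underline{\alpha}\,\|h_\Tcal^{-1}(\Pi_\pw-I)u\|^2 .
\]
The Pythagoras step \eqref{eq:5.4} together with weighted (I2) bounds the last term by \(\Vvert{u-Iu}_{\alpha,\pw}^2-\underline{\alpha}\,\varepsilon''^{-1}\|u-\Pi_\pw u\|^2\). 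The prefactor \(\underline{\alpha}\) is chosen precisely so that the \(\kappa_\eCR^2\underline{\alpha}^{-1}\) from weighted (I2) cancels, leaving \(\Vvert{u-Iu}_{\alpha,\pw}^2\) with coefficient one.

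This term then recombines with \(\Vvert{Iu}_{\alpha,\pw}^2\) into \(\Vvert{u}_\alpha^2\le\lambda-\|u\|_V^2\), and Lemma~\ref{lem:4.9} removes the potential terms. Using \(\|\Pi_\pw u\|^2=1-\|u-\Pi_\pw u\|^2\), this gives
\[
\lambda_\es-\lambda\le\big(\lambda_\es-\underline{\alpha}/\varepsilon''\big)_+\|u-\Pi_\pw u\|^2 .
\]
Next, \(\|u-\Pi_\pw u\|^2\le\|u-Iu\|^2\le\varepsilon''\underline{\alpha}^{-1}\Vvert{u-Iu}_{\alpha,\pw}^2\le\varepsilon''\underline{\alpha}^{-1}\lambda\). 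Hence
\[
\lambda_\es-\lambda\le\big(\varepsilon''\lambda_\es\underline{\alpha}^{-1}-1\big)_+\lambda ,
\]
and rearranging gives the claim.

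\textbf{Main obstacle.} The step I expect to need care is the weighted orthogonality (I1) for \(a_{\alpha,\pw}\). The bubble part of \(I_\eCR\) has non-constant gradients, so one has to check that, for \(\alpha\) constant on each \(T\), the per-element orthogonality underlying \cite[Lemma 4.6]{CG25} survives multiplication by \(\alpha|_T\). I will take it as stated in the text preceding the theorem. Everything else is bookkeeping of the factor \(\underline{\alpha}\).
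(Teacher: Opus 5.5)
Your proposal is correct and follows the paper's proof, which consists of rerunning the proof of Theorem~\ref{thm:dGLB} with \(\kappa_\eCR\underline{\alpha}^{-1/2}\) in place of \(\kappa_\eCR\) (i.e.\ stabilisation weight \(\kappa_\eCR^{-2}\underline{\alpha}\), as you read it) and \(\Vvert{\bullet}_\alpha\) in place of \(\Vvert{\bullet}\); your bookkeeping of \(\underline{\alpha}\) in both cases is right. The point you flag is harmless, because the orthogonality (I1) for \(I_\eCR\) already holds on each simplex separately: face and cell integral means are preserved, \(\Delta\flat_T\) is constant on \(T\), and \(\partial_\nu\flat_T\) is constant on each face, so multiplying by \(\alpha|_T\) cannot destroy it.
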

\begin{proof}
    The proof of Theorem \ref{thm:dGLB} in Subsection \ref{sec:5.1} applies verbatim, when \(
    \kappa_\eCR\,\underline{\alpha}^{-1/2}\) and \(\Vvert{\bullet}_\alpha\) replace \(\kappa_
    \eCR\) and \(\Vvert{\bullet}\).
\end{proof}

\begin{rem}[matrix-valued diffusion]
    General symmetric second-order eigenvalue problems allow a symmetric and (uniformly) elliptic
    diffusion coefficient \(\mathbf{A} \in L^\infty(\Omega;\mathbb{S})\) with the space of symmetric
    \(n \times n\) matrices \(\mathbb{S} \coloneqq \R_\mathrm{sym}^{n \times n}\) in
    \[
        -\div(\mathbf{A} \nabla u) + Vu = \lambda u \textup{ in } \Omega
        \qquad\textup{and}\qquad
        u = 0 \textup{ on } \partial\Omega.
    \]
    For a piecewise constant diffusion \(\mathbf{A} \in P_0(\Tcal;\mathbb{S})\), the orthogonality (I1)
    fails for the enriched Crouzeix-Raviart FEM, but holds for the generalized Crouzeix-Raviart scheme
    \cite{HM14}; see also \cite{Ain07,Voh07} for an equivalent post-processing. The remaining analysis 
    for the generalized Crouzeix-Raviart FEM is the same as in Subsections \ref{sec:5.1} and \ref{sec:5.2},
    provided that \(\underline{\alpha} > 0\) is a lower bound to the spectrum of \(\mathbf{A}\). Then
    the GLB in Theorem \ref{thm:5.1} follows verbatim.
\end{rem}

\begin{rem}[coefficients in \(L^\infty(\Omega)\)]
    The GLB in Theorems \ref{thm:GLB_eCR}, \ref{thm:GLB_RT}, \ref{thm:GLB_mCR} as well as
    the GLB in Theorems \ref{thm:dGLB} and \ref{thm:5.1} require \(V\in P_0(\Tcal)\). For
    general \(V\in L^\infty(\Omega;[0,\infty))\) perturbation arguments in the continuous
    and discrete problems are necessary \cite{Par98}; see also \cite[Theorem 4.3]{HM14}
    for GLB in a perturbation analysis for the diffusion \(A\).
\end{rem}
\vspace*{-0.2cm}

%%% Guaranteed upper eigenvalue bounds %%%%%%%%%%%%%%%%%%%%%%%%%%%%%%%%%%%%%%%%%%%%%%%%%%%%%%%%%%%%%%%%
\section{Guaranteed upper eigenvalue bounds} \label{sec:6}
\vspace*{-0.2cm} 
This section is devoted to guaranteed upper eigenvalue bounds (GUB) by averaging of computed
nonconforming approximations of eigenfunctions as in \cite{CG14,HHS15}.
\\
\\
Let \((u_1,\lambda_1), \dots, (u_K,\lambda_K)\) be the first \(K\) exact eigenpairs in
\eqref{eq:SEVP}, which define the subspace \(E(K)\coloneqq\mathrm{span}\{u_1,\dots,u_K
\}\) of dimension \(K \in \N\). Any nonconforming scheme above provides \(K\) discrete
eigenfunctions \(u_{h(1)},\dots,u_{h(K)} \in \Vh\) that span the vector space \(E_h(K)
\coloneqq\mathrm{span}\{u_{h(1)},\dots,u_{h(K)}\}\). The orthonormality of \(u_{h(1)},
\dots,u_{h(K)}\) ensures \(\dim E_h(K) = K \leq \dim\Vh\). Any linear operator \(\Acal
\) from \(\Vh\) into \(S_0^m(\Tcal) \coloneqq P_m(\Tcal) \cap \V \subset \V\) for some
\(m \in \N\) defines the conforming linear subspace \(\Acal E_h(K) \coloneqq \mathrm{
span}\{\Acal u_{h(1)}, \dots, \Acal u_{h(K)}\}\). The Rayleigh-Ritz min-max principle
\eqref{eq:minMax} suggests the following approximation of \(\lambda_k\) for all \(k
\in \{1,\dots,K\}\)
\begin{equation}
    \mu_k \coloneqq \min_{\substack{V_{\rm c} \subseteq \Acal E_h(K) \\ \dim V_{\rm c}=k}}
    \max_{v_{\rm c} \in V_{\rm c} \setminus\{0\}} \frac{a(v_{\rm c},v_{\rm c}) + (v_{\rm c
    },v_{\rm c})_V}{b(v_{\rm c},v_{\rm c})}.
    \label{eq:GUB}
\end{equation}
Define the matrices \(\texttt{A},\texttt{B} \in\R^{K \times K}\) by
\begin{equation}
    \begin{aligned}
        \texttt{A}_{jk}
        &=  a\big( \mathcal{A}u_{h(j)},\mathcal{A}u_{h(k)} \big)
            + \big( \mathcal{A}u_{h(j)},\mathcal{A}u_{h(k)} \big)_V, \\
        \texttt{B}_{jk}
        &=  b\big( \mathcal{A}u_{h(j)},\mathcal{A}u_{h(k)} \big)
    \end{aligned}
    \qquad\textup{for all } j,k = 1, \dots, K.
    \label{eq:AB}
\end{equation}
The \(k\)-th positive eigenvalue \(\mu_k\) of the algebraic eigenvalue problem \(\texttt{A}x =
\mu \texttt{B}x\) satisfies \(\mu_k \geq \lambda_k\). If this algebraic eigenvalue problem has
less than \(k\) positive eigenvalues (counting multiplicities), then \(\dim\Acal E_h(K) \leq k
-1\). The latter is possible, as linear dependence of \(\Acal u_{h(1)},\dots, \Acal u_{h(K)}\)
leads to a Rayleigh quotient \(0/0\) in \eqref{eq:GUB} and MATLAB \(\mu=\texttt{sort(eig(A,B))%
}\) outputs \texttt{NaN}.

\begin{rem}[injective conforming companion] \label{rem:6.1}
    If \(\Acal: \Acal E_h(K) \to S_0^m(\Tcal)\) is injective, then \(\dim E_h(K) = K\) and hence
    the \(k\)-th positive eigenvalue \(\mu_k\) in \(\texttt{A}x = \mu\texttt{B}x\) exists for \(
    k \leq K\) and satisfies \(\mu_k \geq \lambda_k\).
\end{rem}

\begin{ex}[\(P_1\) averaging for CR] \label{ex:P1}
    At any interior vertex \(z \in \Vcal(\Omega)\) with patch \(\Tcal(z) \coloneqq \{T \in \Tcal
    : z \in \Vcal(T)\}\), \(v_\CR \in \CR_0^1(\Tcal)\) has in general different values \(v_\CR|_
    T(z)\) at \(T \in \Tcal(z)\). Their arithmetic mean defines the linear operator \(\Acal_1 :
    \CR_0^1(\Tcal) \to S_0^1(\Tcal)\) at any \(v_\CR \in \CR_0^1(\Tcal)\) by
    \[
        (\Acal_1 v_\CR)(z) \coloneqq
        \begin{cases}
            \vert\Tcal(z)\vert^{-1} \sum_{T \in \Tcal(z)} v_\CR|_T(z) &\textup{for } z \in \Vcal(\Omega), \\
            0   &\textup{for } z \in \Vcal(\partial\Omega).
        \end{cases}
    \]
    Since \(\dim S_0^1(\Tcal) = |\Vcal(\Omega)| \leq |\Fcal(\Omega)|-1 = \dim\CR_0^1(\Tcal)-1\)
    if \(\Tcal\) consists of more than one simplex, \(\Acal_1\) is \emph{not} injective. However,
    it suffices to solve the small algebraic EVP \(\texttt{A}x = \mu\texttt{B}x\) to check if a
    positive eigenvalue \(\mu_k\) exists, which provides \(\mu_k \geq \lambda_k\).
\end{ex}

\begin{ex}[\(P_2\) averaging for CR] \label{ex:P2}
    Consider the operator \(\Acal_2 : \CR_0^1(\Tcal)\to S_0^2(\Tcal)\) that maps any \(v_\CR
    \in \CR_0^1( \Tcal)\) to \(\Acal_2 v_\CR \in S_0^2(\Tcal)\) by \(\Acal_2 v_\CR \coloneqq
    \Acal_1 v_\CR\) at \(\Vcal\) and
    \[
        (\Acal_2 v_\CR)(\mid(F)) \coloneqq v_\CR(\mid(F)) \qquad\textup{for all } F \in \Fcal.
    \]
    Since any \(v_\CR \in\CR_0^1(\Tcal)\setminus\{0\}\) satisfies \(v_\CR(\mid(F)) \neq 0\)
    for some \(F \in \Fcal(\Omega)\), we have \(\Acal_2 v_\CR \neq 0\). Thus \(\Acal_2\) is
    injective. Remark \ref{rem:6.1} ensures that \(\mu_k \geq \lambda_k\) exists and is a
    guaranteed upper bound for any \(k \leq K\).
\end{ex}

\begin{ex}[averaging for eCR] \label{ex:P1eCR}
    The nodal averaging operator \(\Acal_1 : \CR_0^1(\Tcal) \to S_0^1(\Tcal)\) of Example \ref{ex:P1}
    and the Crouzeix-Raviart interpolation \(I_\CR : \V + \eCR_0^1(\Tcal) \to \CR_0^1(\Tcal)\) define
    the averaging operator \(\Acal_1 \circ I_\CR : \eCR_0^1(\Tcal) \to S_0^1(\Tcal)\) by composition.
    Since \(\dim S_0^1(\Tcal) - \dim\eCR_0^1(\Tcal) = 1 - 2|\Tcal| < 0\) (for a simply connected domain),
    the operator \(\Acal_1 \circ I_\CR\) is \emph{not} injective.
\end{ex}

\noindent
The theorem below asserts a sufficient criterion for the existence of a \(k\)-th positive eigenvalue
\(\mu_k \geq \lambda_k\) in \(\texttt{A}x = \mu \texttt{B}x\).

\begin{thm} \label{thm:6.8}
    If \(\alpha_k \coloneqq \sup_{v_h \in E_h(k)\setminus\{0\}} \|v_h-\Acal v_h\|/\Vvert{v_h}_\pw <
    \lambda_{h(k)}^{-1/2}\), then \(\mu_k \geq \lambda_k\).
\end{thm}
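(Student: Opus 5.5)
The plan is to show that \(\Acal\) restricted to \(E_h(k)\) is injective. Then \(\Acal E_h(k)\subseteq \Acal E_h(K)\) is a \(k\)-dimensional subspace of \(\V\), the min-max characterisation \eqref{eq:GUB} admits a \(k\)-dimensional competitor, and \(\mu_k\) is finite, i.e.\ the algebraic problem \(\texttt{A}x=\mu\texttt{B}x\) has at least \(k\) positive eigenvalues. The inequality \(\mu_k\geq\lambda_k\) then follows from \eqref{eq:minMax}: every \(k\)-dimensional subspace \(V_{\rm c}\subseteq\Acal E_h(K)\subset\V\) belongs to \(\Scal(k)\). Hence the maximal Rayleigh quotient over \(V_{\rm c}\) is at least \(\lambda_k\), and minimising over \(V_{\rm c}\) gives \(\mu_k\geq\lambda_k\).

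The key step is the injectivity. Take \(v_h\in E_h(k)\setminus\{0\}\) and suppose \(\Acal v_h=0\). Then \(\|v_h\| = \|v_h-\Acal v_h\| \leq \alpha_k \Vvert{v_h}_\pw\). We bound \(\Vvert{v_h}_\pw\) by the discrete Rayleigh quotient. Write \(v_h=\sum_{j\le k}\xi_j u_{h(j)}\), with \(u_{h(j)}\) the discrete eigenfunctions; they are \(b(Q\bullet,Q\bullet)\)-orthonormal and orthogonal in the left-hand form of \eqref{eq:dSEVP}. Using \((Pv_h,Pv_h)_V\geq0\) (recall \(V\geq0\)), this gives
\[
\Vvert{v_h}_\pw^2\leq a_\pw(v_h,v_h)+\|Pv_h\|_V^2=\sum_{j\le k}\xi_j^2\lambda_{h(j)}\leq \lambda_{h(k)}\,\|Qv_h\|^2 .
\]
In the schemes with \(Q=\mathrm{id}\) (CR, eCR, mCR) this is \(\lambda_{h(k)}\|v_h\|^2\). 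For \(Q=\Pi_0\) I would use \(\|\Pi_0 v_h\|\leq\|v_h\|\). Combining, \(\|v_h\|\leq\alpha_k\lambda_{h(k)}^{1/2}\|v_h\|<\|v_h\|\), which is a contradiction unless \(\|v_h\|=0\).

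The main obstacle is the degenerate case \(\|v_h\|=0\) with \(v_h\neq0\). This cannot happen, since \(\|\bullet\|\) is a norm on the piecewise polynomials in \(\Vh\). We also need \(\Vvert{v_h}_\pw>0\) for \(\alpha_k\) to make sense; this holds because \(a_\pw\) is a scalar product on \(\Vh\), as recalled after \eqref{eq:dSEVP}. A further point is to make sure the orthogonality of the \(u_{h(j)}\) with respect to the left-hand form is used correctly when the eigenvalues have multiplicities. Choosing an orthonormal eigenbasis within each eigenspace handles this, since \(E_h(k)\) is spanned by such a basis.

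Having shown \(\dim\Acal E_h(k)=k\), we have \(\dim \Acal E_h(K)\ge k\), so the \(k\)-th positive eigenvalue \(\mu_k\) exists and the min-max comparison above applies. This concludes the plan.
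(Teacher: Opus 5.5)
Your proposal is correct and follows essentially the paper's route. Both arguments get injectivity of \(\Acal\) on \(E_h(k)\) from \(\Vvert{v_h}_\pw^2\le\lambda_{h(k)}\|v_h\|^2\) together with \(\alpha_k\lambda_{h(k)}^{1/2}<1\), and then conclude with the Rayleigh--Ritz min-max principle; the paper states injectivity directly as the reverse-triangle bound \(\|\Acal v_h\|\geq 1-\alpha_k\lambda_{h(k)}^{1/2}>0\) for \(\|v_h\|=1\), whereas you argue by contradiction, and your explicit derivation of the discrete Rayleigh-quotient bound (using \(V\geq0\), and \(\|\Pi_0 v_h\|\leq\|v_h\|\) when \(Q=\Pi_0\)) fills in a step the paper leaves implicit.
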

\begin{proof}
    Any \(v_h \in E_h(k)\) with \(\|v_h\| = 1\) satisfies \(\Vvert{v_h}_\pw^2  \leq \lambda_{h(k)}\)
    from \(v_h \in E_h(k)\),~whence
    \[
        \| v_h - \Acal v_h \| \leq \alpha_k \, \Vvert{v_h}_\pw \leq \alpha_k \, \lambda_{h(k)}^{1/2}.
    \]
    A reverse triangle inequality, \(\|v_h\| = 1\), and the assumption \(\alpha_k < \lambda_{h(k)}^{
    -1/2}\) reveal
    \[
        0 < 1 - \alpha_k \lambda_{h(k)}^{1/2} \leq 1 - \|v_h-\Acal v_h\| \leq \|\Acal v_h\|.
    \]
    Consequently, \(\Acal|_{E_h(k)}\) is injective and Remark \ref{rem:6.1} implies \(\dim\Acal
    E_h(k) = k\). Thus, \(\mu_k \geq \lambda_k\) holds and concludes the proof.
\end{proof}

\begin{rem}[mesh-size condition]
    For the averaging operator of Example \ref{ex:P1eCR}, \cite{HM14} provides an upper bound \(\alpha
    _k\leq \beta h_{\max}\) with some explicit mesh-dependent constant \(\beta>0\). Hence, Theorem \ref{thm:6.8} results in the a-priori mesh-size condition \(h_{\max}< 1/\beta\lambda_{h(k)}^{1
    /2}\) sufficient for the existence of \(\mu_k \geq \lambda_k\). The authors in \cite{HM14} remark
    that this condition is \emph{not} strict. This is due to the fact that \(\beta h_{\max}\)
    is in fact an upper bound on the supremum over \(\Vh\setminus\{0\}\) rather than \(E_h(k) \setminus
    \{0\}\) in the definition of \(\alpha_k\).
\end{rem}

\begin{rem}[compute \(\alpha_k\)]
    Define \(\texttt{C},\texttt{D} \in \R^{k \times k}\) by \(\texttt{D} \coloneqq \mathrm{diag}(
    \lambda_{h(1)},\dots,\lambda_{h(k)})\) and
    \[
        \texttt{C}_{j\ell} \coloneqq b\big( u_{h(j)}-\Acal u_{h(j)}, u_{h(\ell)}-\Acal u_{h(\ell)}
        \big) \qquad\textup{for } j,\ell = 1, \dots, k.
    \]
    The largest eigenvalue \(\gamma_{\max}\) in the algebraic EVP \(\texttt{C}x = \gamma\texttt{D}
    x\) provides \(\alpha_k = \gamma_{\max}^{1/2}\).
\end{rem}

\begin{rem}[a-priori analysis]
    Suppose that the continuos eigenfunction \(u\) in \eqref{eq:SEVP} satisfies \(u\in\V\cap
    H^{1+s}(\Omega)\) for some \(0<s\leq1\). Then the a-priori analysis in \cite{HHS15,HM14}
    reveals for the averaging from Example \ref{ex:P1eCR} that \(\mu_k-\lambda_k=\mathcal{O}
    (h_{\max}^{2s})\) for the harmonic EVP. It is expected that the additional \(L^2\) term
    in \eqref{eq:SEVP} does \emph{not} obstruct the arguments therein. Note that the proof
    of the a-priori convergence rates simplifies for the averaging from Example \ref{ex:P1}.
\end{rem}

%%% Numerical Experiments %%%
\section{Numerical experiments} \label{sec:7}
\vspace{-0.2cm} 
Computational benchmarks compare GLB and GUB from the schemes of Table \ref{tab:1} for different
potentials \(V\) in Figure~\ref{fig:groundStates} on two convex squares and the non-convex L-shaped
domain of Figures \ref{fig:inital_mesh} and \ref{fig:anderson_mesh}.

\begin{figure}[ht]
	\centering
	\scalebox{0.3}{\includegraphics{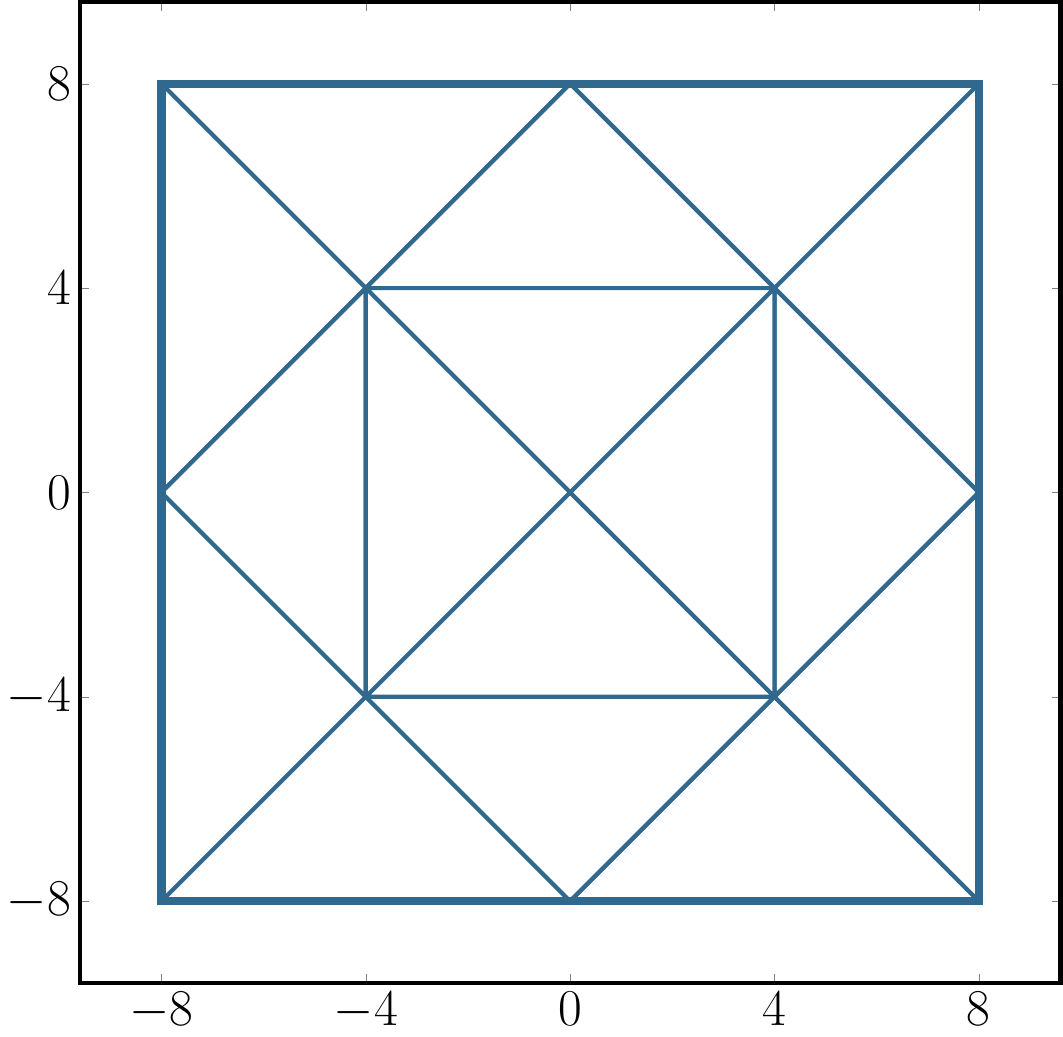}}
	\hspace{1cm}
	\scalebox{0.3}{\includegraphics{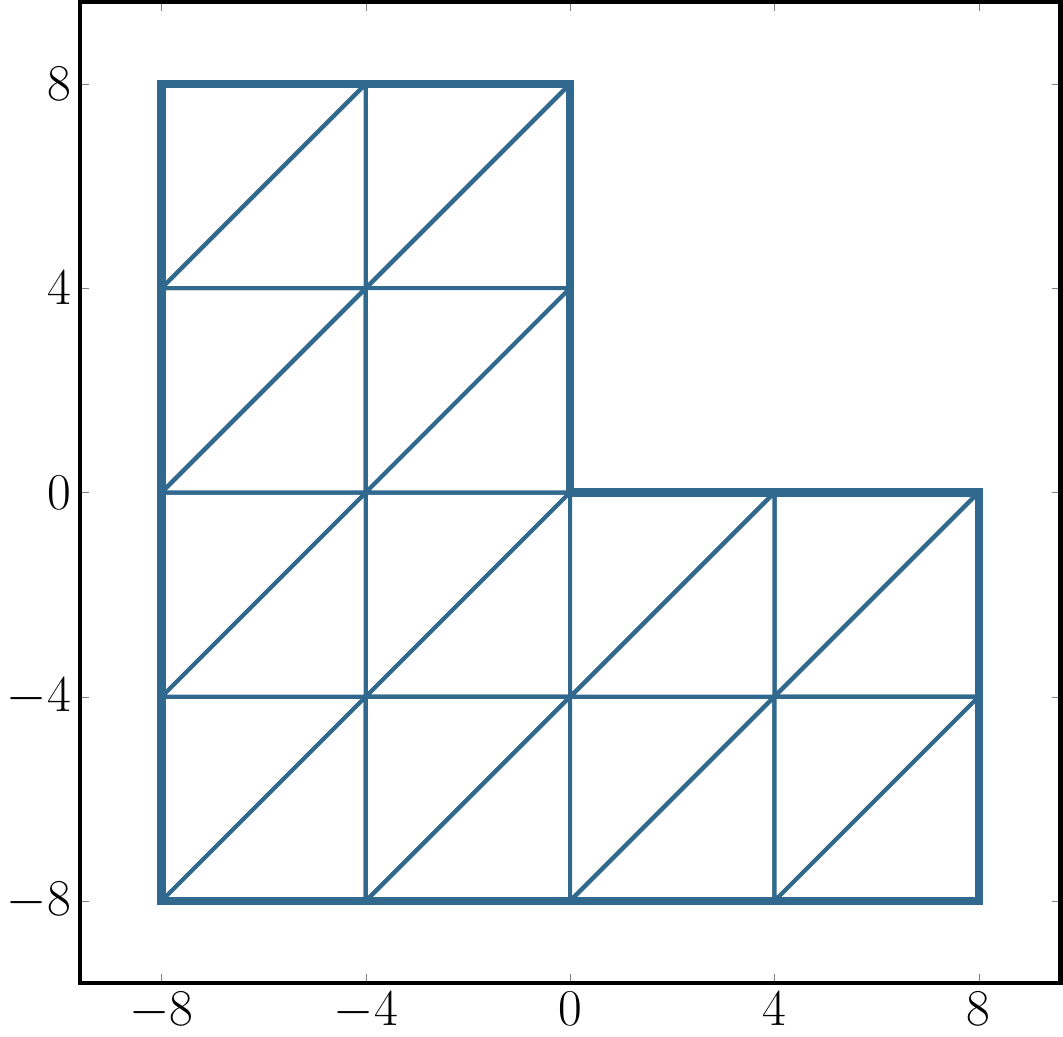}}
	\vspace{-0.1cm} 
	\caption{Initial triangulations of the square (left) and L-shaped domain (right) in Subsection
		\ref{sec:7.2} and \ref{sec:7.3}.}
	\label{fig:inital_mesh}
	\vspace{-0.2cm} 
\end{figure}

%% Remarks on the implementation %%%
\subsection{Remarks on the implementation} \label{sec:7.1}
The MATLAB implementation extends \cite{ACS99,BC05} and solves all algebraic eigenvalue problems
with \texttt{eigs}. Round-off error are expected very small and hence neglected for simplicity.
The convergence history plots display the non-negative accuracy \(\lambda_k-\GLB(k)\) or \(\GUB
(k)-\lambda_k\) for the \(k\)-th eigenvalue \(\lambda_k\) in \eqref{eq:SEVP} and the GLB/GUB in
Figure~\ref{fig:GLB_legend} over the number of triangles \(|\Tcal|\) for uniform red-refinement
\(\theta = 1\) and adaptive mesh-refinement \(\theta = 0.5\) in the D\"orfler marking algorithm
\cite{Do96}. The exact eigenvalue \(\lambda_k\) is unknown in all examples below. Aitken extrapolations
from conforming Courant (S1) eigenvalue approximations on very fine meshes provide all reference
values for \(\lambda_k\).

\begin{figure}[ht]
    \centering
    \includegraphics[width=0.75\linewidth]{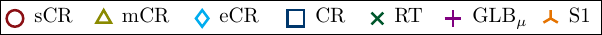}
    \vspace{-0.1cm} 
    \caption{Methods for GLB/GUB in the computational benchmarks of Section \ref{sec:7}.}
    \label{fig:GLB_legend}
\end{figure}

\noindent
Given a discrete eigenpair \((\lambda_h,u_h)\in\R_+\times\Vh\), the adaptive mesh-refinement
\cite{Do96,Car+14,CP24} with newest-vertex bisection is driven by the (heuristic) refinement
indicator \(\eta^2(T) \coloneqq \eta(T)^2\) defined for any \(T \in \Tcal\) by
\begin{equation}
    \eta^2(T) \coloneqq |T| \, \|(\lambda_h-V)u_\CR\|_{L^2(T)}^2 + |T|^{1/2} \sum_{F \in \Fcal(T)}
    \| [\nabla u_\CR]_F \times \nu_F \|_{L^2(F)}^2
    \label{eq:est}
\end{equation}
for the tangential component \([\nabla u_\CR]_F \times \nu_F\) of the jump \([\nabla u_\CR]_F\)
across \(F\in\Fcal\). The estimator from \eqref{eq:est} is motivated from the source problem as
in \cite[Lemma 3.1 of Subsection 3.2.2]{CGN24}. The selection of \(u_\CR \in\CR_0^1(\Tcal)\) in
\eqref{eq:est} depends on the discrete EVP. For the CR EVP, \(u_\CR\) in \eqref{eq:est} is the
CR eigenfunction. Recall that any \(u_\eCR \in \eCR_0^1(\Tcal)\) decomposes uniquely as \(u_\eCR
= u_\CR + u_2\) with \(u_\CR \in \CR_0^1(\Tcal)\) and \(u_2 \in B(\Tcal)\). Hence, \(u_\CR\) in
\eqref{eq:est} is the CR part of the eCR and mCR eigenfunctions in the respective EVP. The sCR
eigenfunction reads \(u_\es = (u_\pw,u_\nc) \in \V_\pw \times \V_\nc\) and \(u_\CR\) is the CR
part of \(u_\nc\in\eCR_0^1(\Tcal)\) in \eqref{eq:est}. Undisplayed numerical experiments confirm
that individual refinement indicators (that is \eqref{eq:est} for each \(u_\CR\) describes above)
lead to outputs in the adaptive algorithm that do not differ significantly from each other.
Therefore, the adaptive mesh-refinement is driven by sCR EVP in \eqref{eq:est} in all experiments
below. The GUB for the \(k\)-th eigenvalue \(\lambda_k\) in \eqref{eq:SEVP} are computed based on
\(k\) nonconforming eigenfunctions \(u_{h(1)},\dots,u_{h(k)}\in\Vh\) with the averaging operators
\(\Acal\) from Example \ref{ex:P1} and \ref{ex:P1eCR}. For sCR EVP, \(\Acal\) applies to the nonconforming 
component \(u_\nc\in\eCR_0^1(\Tcal)\) of \(u_\es = (u_\pw,u_\nc) \in \V_\pw \times \V_\nc\). All
integrals in Subsection \ref{sec:7.3} are evaluated by a quadrature rule that is exact for
polynomials up to (total) degree at most 10.

%%% Harmonic potential %%%
\subsection{Harmonic potential} \label{sec:7.2}
We approximate the harmonic potential \(V_1(x) \coloneqq |x|^2/2\) on the square and L-shaped domain
from Figure~\ref{fig:inital_mesh} by piecewise constants \(\Pi_0 V_1\) and exploit the edge midpoint
quadrature rule for quadratic functions on a triangle \(T \in \Tcal\)
\[
    \Pi_0 V_1|_T
    =   \dashint_T \frac{|x|^2}{2} \dx
    =   \frac{1}{6} \sum_{F \in \Fcal(T)} |\mid(F)|^2.
\]

%%% Ground state %%%
\paragraph{Ground state on \(\mathbf{\Omega = (-8,8)^2}\).}
Figure~\ref{fig:Square_1_harm} displays the convergence history plot of the five GLB from Table \ref{tab:1}
and the GUB from CR, eCR, mCR, sCR, and the Courant EVP approximation of the principal eigenvalue \(\lambda
_1 = \sqrt{2}\) (conjectured based on undisplayed numerical evidence). Uniform mesh-refinement leads to the
empirical optimal convergence rate one for any scheme under investigation. The mCR GLB are most accurate and
significantly sharper than \(\GLB_\es\) that are slightly sharper than \(\GLB_\RT\). The GLB from eCR and CR
EVP show a pre-asymptotic regime without systematic convergence for triangulations with \(|\Tcal| \leq 10^3
\). Thereafter, the GLB converge with the optimal rate one, but \(\lambda_1-\GLB_\CR(1)\) is larger than
\(\lambda_1 - \GLB_\mCR(1)\) in Figure~\ref{fig:Square_1_harm} by more than two orders of magnitude even
on the finest mesh. This is expected due to the direct dependence of \(\GLB_\CR\) and \(\GLB_\eCR\) on \(
\|V_1\|_\infty = 64\). The two coarsest triangulations provide rough GUB approximations from mCR and sCR,
however, the strongly improved GUB match the accuracy of the Courant EVP for any finer mesh in Figure~\ref{%
fig:Square_1_harm}. The GUB from CR and eCR EVP are as accurate as the Courant EVP approximation on any
triangulation in Figure~\ref{fig:Square_1_harm} with less overall computational cost.

\begin{figure}[t]
	\centering
	\scalebox{0.72}{\includegraphics{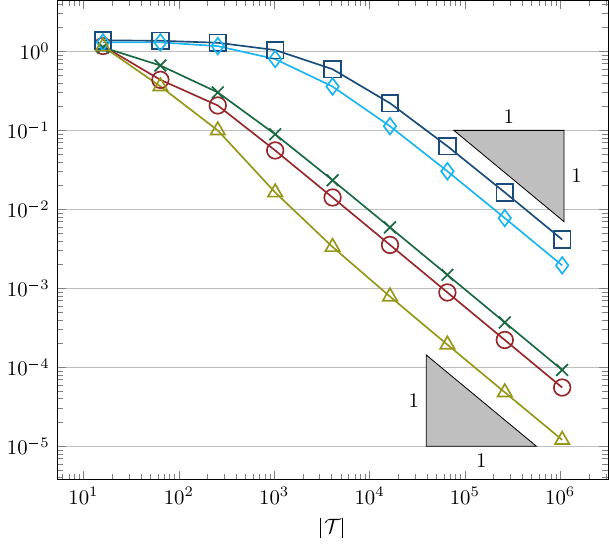}}
	\scalebox{0.72}{\includegraphics{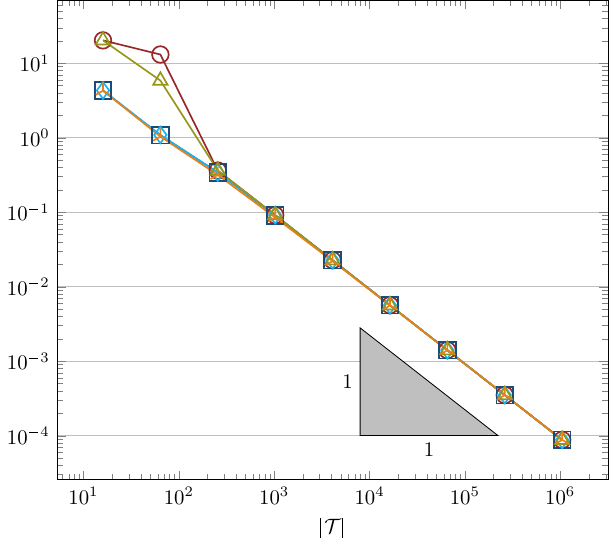}}
	\caption{Convergence history plot for \(\lambda_1-\GLB(1)\) (left) and \(\GUB(1)-\lambda_1\) (right)
		on uniform meshes of \(\Omega = (-8,8)^2\) for \(V_1\).}
	\label{fig:Square_1_harm}
\end{figure}

%%% Excited states %%%
\paragraph{Excited states on \(\mathbf{\Omega = (-8,8)^2}\).}
Figure~\ref{fig:Square_20_harm} displays the convergence history plot of \(\GLB_\mu(
20)\) from Theorem \ref{thm:GLB_Liu} in addition to that of the five GLB from Table
\ref{tab:1}. A separate plot in Figure~\ref{fig:Square_20_harm} shows the convergence
history of the GUB from the CR, eCR, mCR, and sCR EVP in comparison to the Courant EVP
approximation. Uniform red-refinement leads in Figure~\ref{fig:Square_20_harm} to the
optimal convergence rate one for all GLB and GUB towards \(\lambda_{20}= 8.4852765\).
The mCR GLB are most accurate followed by \(\GLB_\es\) and \(\GLB_\RT\). Remark \ref{rem:comp}
predicts \(\GLB_\es < \GLB_\mCR\) for small mesh-sizes on uniform meshes and this is
visible in Figure~\ref{fig:Square_20_harm}. The quotient \(\lambda_{20}/\lambda_1 >6
\) leads to \(\GLB_\mu(20) < \GLB_\CR(20)\), whence \(\GLB_\mu\) is the worst lower
bound in Figure~\ref{fig:Square_20_harm}: On the finest triangulation, \(\lambda_{20
}-\GLB_\mu(20)\) is still two orders of magnitude larger than \(\lambda_{20} - \GLB_
\mCR(20)\) and one order of magnitude larger than \(\GLB_\CR\) of Theorem \ref{thm:GLB_CR}.
Averaging of the sCR eigenfunctions leads to linear dependences and \texttt{NaN} in
\eqref{eq:GUB} on the three coarse triangulations, which are omitted in Figure~\ref{fig:Square_20_harm}.
On triangulations with \(|\Tcal| \geq 10^4\), however, \(\GUB_\es(20)\) matches the
accuracy of the Courant EVP approximation and any other GUB under investigation.

\begin{figure}[t]
	\centering
	\scalebox{0.72}{\includegraphics{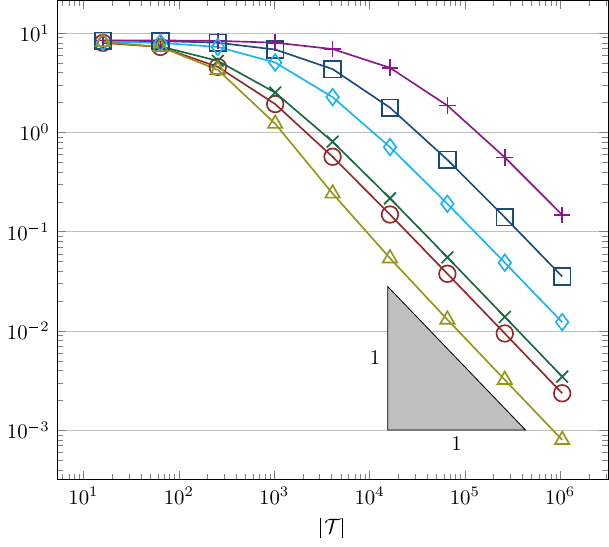}}
	\scalebox{0.72}{\includegraphics{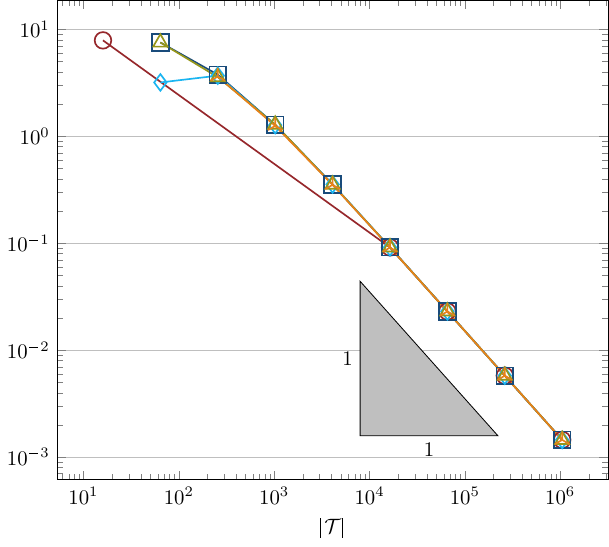}}
	\caption{Convergence history plot for \(\lambda_{20}-\GLB(20)\) (left) and \(\GUB(20)-\lambda_{20}\)
		(right) on uniform meshes of \(\Omega = (-8,8)^2\) for \(V_1\).}
	\label{fig:Square_20_harm}
\end{figure}

%%% L-shaped benchmark %%%
\paragraph{L-shaped benchmark.}
The re-entrant corner of the L-shaped domain in Figure~\ref{fig:inital_mesh} leads to the principal
eigenvalue \(\lambda_1 = 2.357076\) for the ground state \(u_1\in H_0^1(\Omega)\setminus H^2(\Omega
)\). This predicts a reduced convergence rate \(2/3\), which is visible for the guaranteed lower and
upper eigenvalue bounds in Figure~\ref{fig:Lshape_1_harm}. Adaptive mesh-refinement remarkably
recovers the optimal convergence rate one for all GUB in Figure~\ref{fig:Lshape_1_harm}. Figure
\ref{fig:adaptive_mesh_Lshape} depicts the expected local mesh-refinement at the origin. Figure
\ref{fig:Lshape_1_harm} shows that any post-processed GLB from Table \ref{tab:1} is almost constantly
equal to \(0\) as they suffer from a maximal mesh-size \(h_{\max} = \sqrt{8}\) in Figure
\ref{fig:adaptive_mesh_Lshape}: Adaptive meshes lead to useless post-processed GLB. Only the adaptive
sCR EVP displays optimal convergence rates and significantly improves any GLB on uniform meshes.

\begin{figure}[ht]
	\centering
	\scalebox{0.7}{\includegraphics{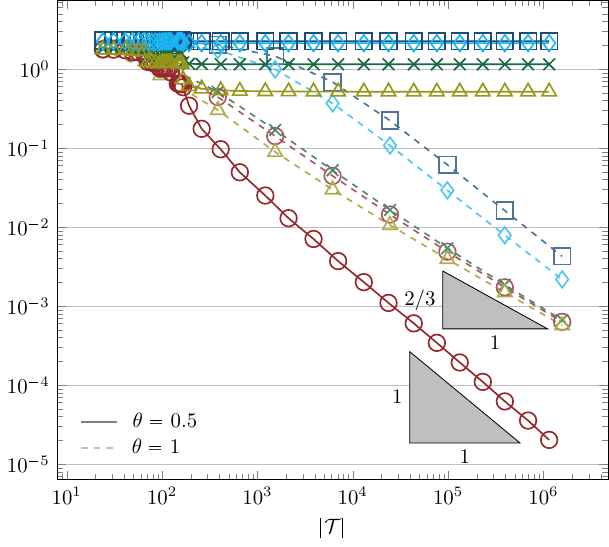}}
	\scalebox{0.7}{\includegraphics{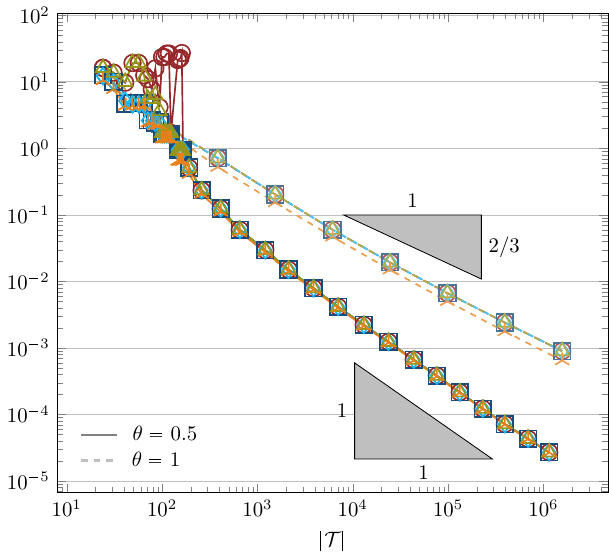}}
	\caption{Convergence history plot for \(\lambda_1-\GLB(1)\) (left) and \(\GUB(1)-\lambda_1\) (right)
		on uniform (dashed) and adaptive (solid) meshes of \(\Omega = (-8,8)^2 \setminus [0,8)^2\) for \(V_1\).}
	\label{fig:Lshape_1_harm}
\end{figure}

\begin{figure}[ht]
	\centering
	\scalebox{0.3}{\includegraphics{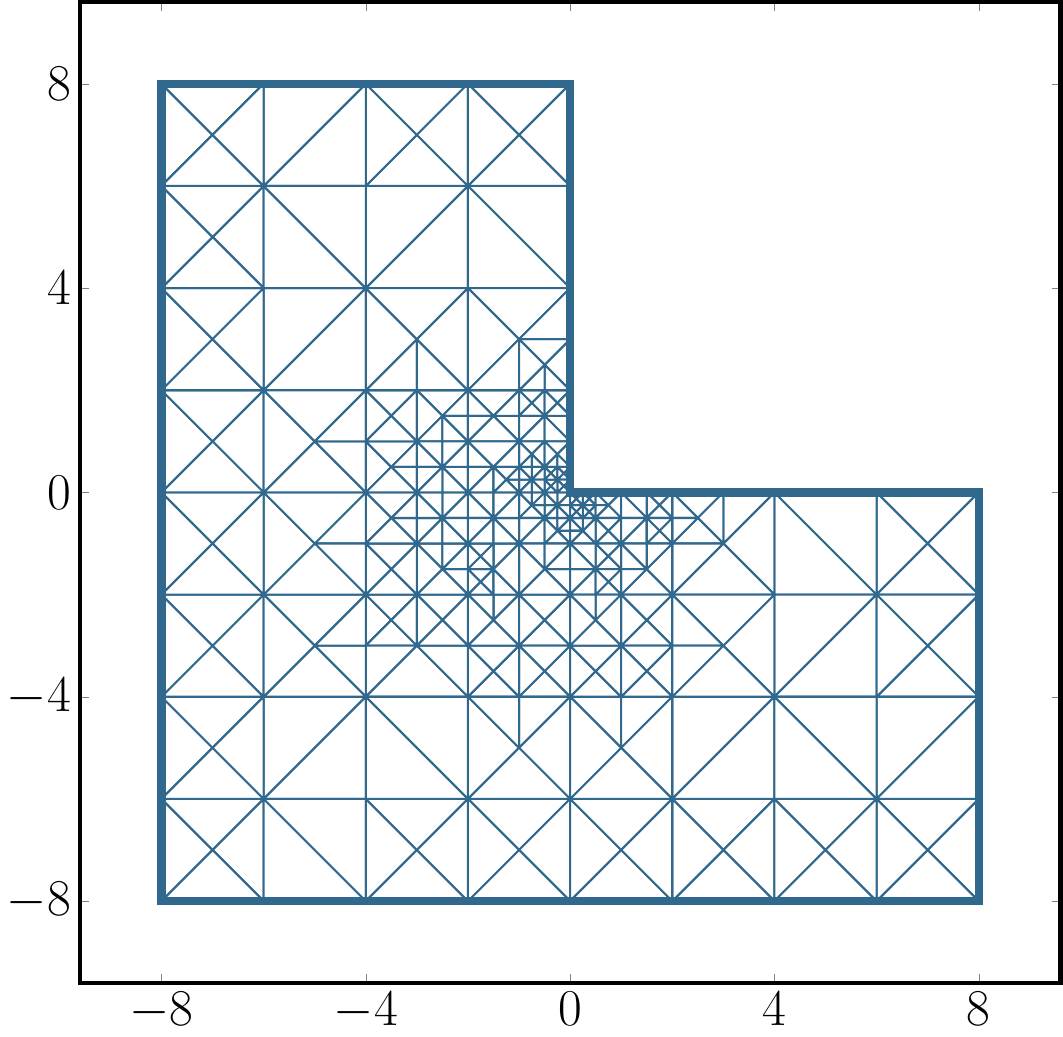}}
	\hspace{1cm}
	\scalebox{0.3}{\includegraphics{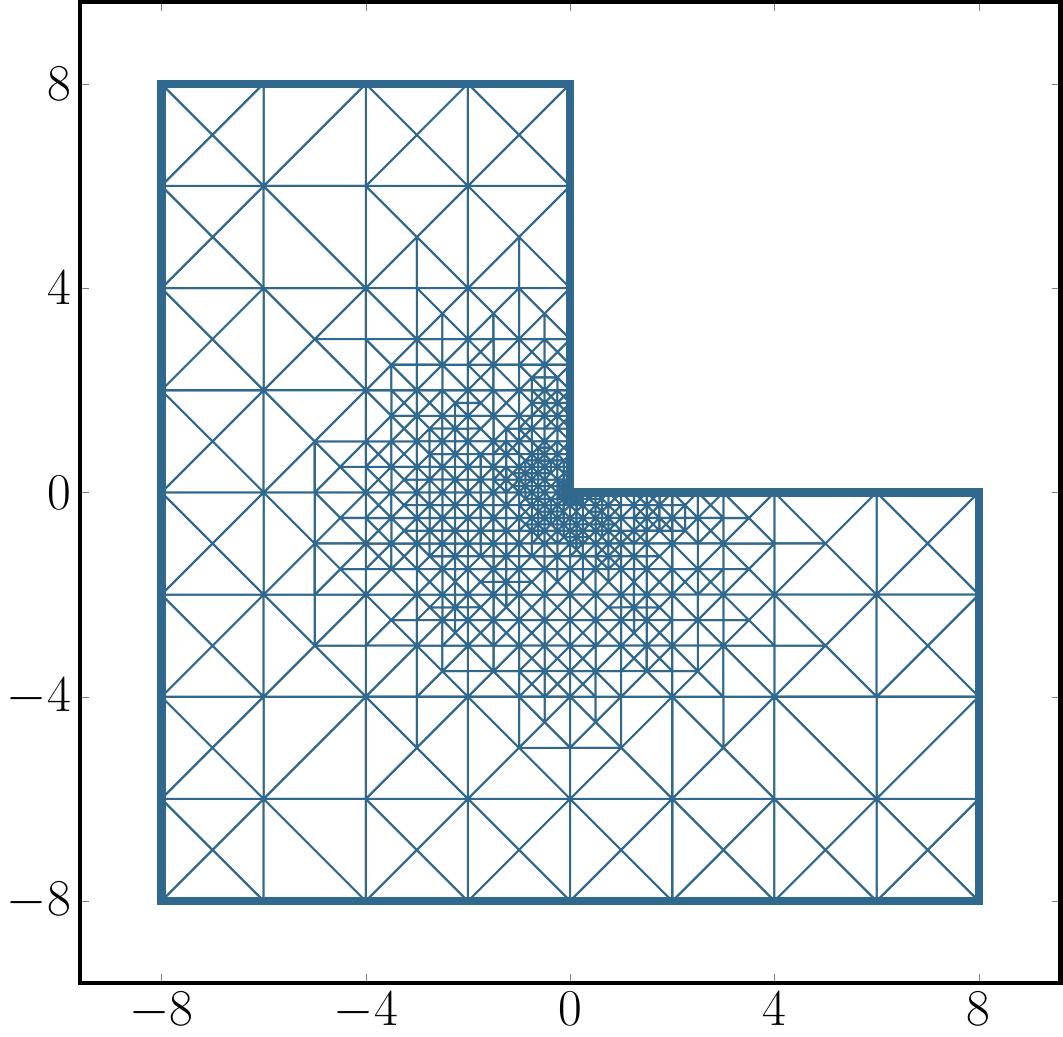}}
	\caption{Adaptive triangulations \(\Tcal_\ell\) on level \(\ell = 23\) (left) and \(\ell = 25\)
		(right) for \(\lambda_1\) with \(V_1\) on \(\Omega = (-8,8)^2 \setminus [0,8)^2\) with \(|\Tcal
		_{23}| = 412\) and \(|\Tcal_{25}| = 1274\) in Subsection \ref{sec:7.2}.}
	\label{fig:adaptive_mesh_Lshape}
\end{figure}

%%% Lattice potential %%%
\subsection{Lattice potential} \label{sec:7.3}
The lattice potential \(V_2(x) \coloneqq (|x|^2/2 - 16)_+ + \lfloor 30 + 10 \sin(\pi x_1/2) \sin(
\pi x_2/2)\rfloor\) in Figure~\ref{fig:groundStates} is adopted from \cite{HP17} and approximated
by piecewise constants \(\Pi_0V_2\). The quadrature error for the discontinuous function \(V_2\)
is neglected and results in perturbations of Rayleigh quotients \cite{CG14,Par98}.
\begin{figure}[t]
	\vspace*{-0.1cm} 
    \centering
    \scalebox{0.72}{\includegraphics{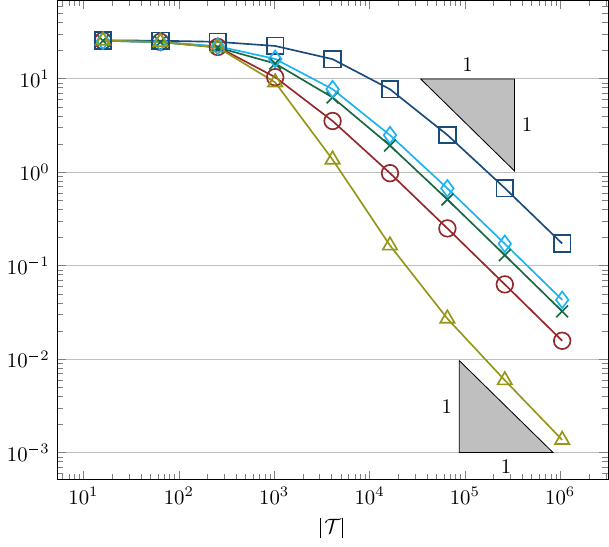}}
    \scalebox{0.72}{\includegraphics{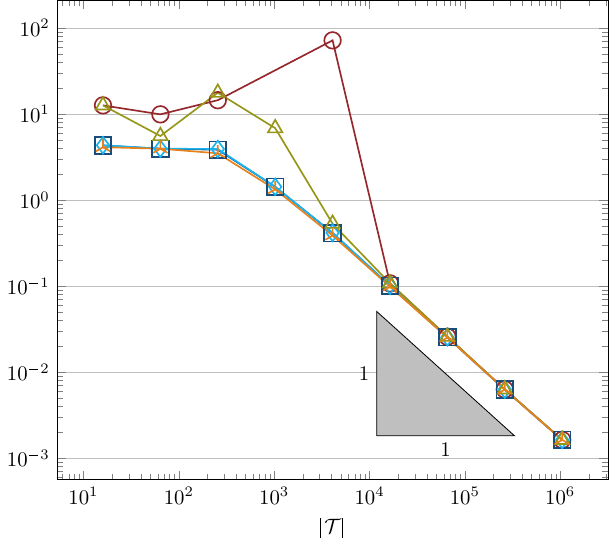}}
    \vspace{-0.3cm} 
    \caption{Convergence history plot for \(\lambda_1-\GLB(1)\) (left) and \(\GUB(1)-\lambda_1\) (right)
    on uniform meshes of \(\Omega = (-8,8)^2\) for \(V_2\).}
    \label{fig:Square_1_lattice}
    \vspace*{-0.3cm} 
\end{figure}
Figure~\ref{fig:Square_1_lattice} displays the convergence history plot of the GLB
from Table \ref{tab:1} and the GUB from CR, eCR, mCR, and sCR EVP as well as the Courant EVP
for \(\lambda_1 = 25.743622\) on \(\Omega = (-8,8)^2\). Figure~\ref{fig:Square_1_lattice}
shows pre-asymptotic stagnation on triangulations with \(|\Tcal|\leq 10^3\) for all GLB. This
observation applies verbatim for the GUB from CR, eCR, and Courant EVP on the three coarsest
triangulations in Figure~\ref{fig:Square_1_lattice}. Uniform mesh-refinement leads to the optimal
convergence rate one for any quantity under investigation. The mCR GLB are sharpest followed
by \(\GLB_\es\). The accuracy of the latter is one order of magnitude worse. This observation
agrees with Remark \ref{rem:comp} that predicts \(\GLB_\es <\GLB_\mCR\) for small mesh-sizes.
It is remarkable that \(\GLB_\eCR\) is only slightly inferior to \(\GLB_\RT\) although the former directly
depends on \(\|V_2\|_\infty = 88\). The CR GLB is the worst lower bound in Figure~\ref{fig:Square_1_lattice}
and less accurate than \(\GLB_\mCR\) by two orders of magnitude. The triangulation with \(|\Tcal|\leq 10^4\) 
provide coarse GUB from mCR and sCR EVP in Figure~\ref{fig:Square_1_lattice}, however, for finer
triangulation any scheme leads to GUB of identical accuracy. It is noteworthy that CR and eCR EVP
lead to GUB with the same accuracy as the Courant EVP on every mesh in Figure~\ref{fig:Square_1_lattice}.

\vspace{-0.1cm} 
\begin{figure}[ht]
    \centering
    \begin{subfigure}[t]{0.32\textwidth}
        \centering
        \scalebox{0.38}{\includegraphics{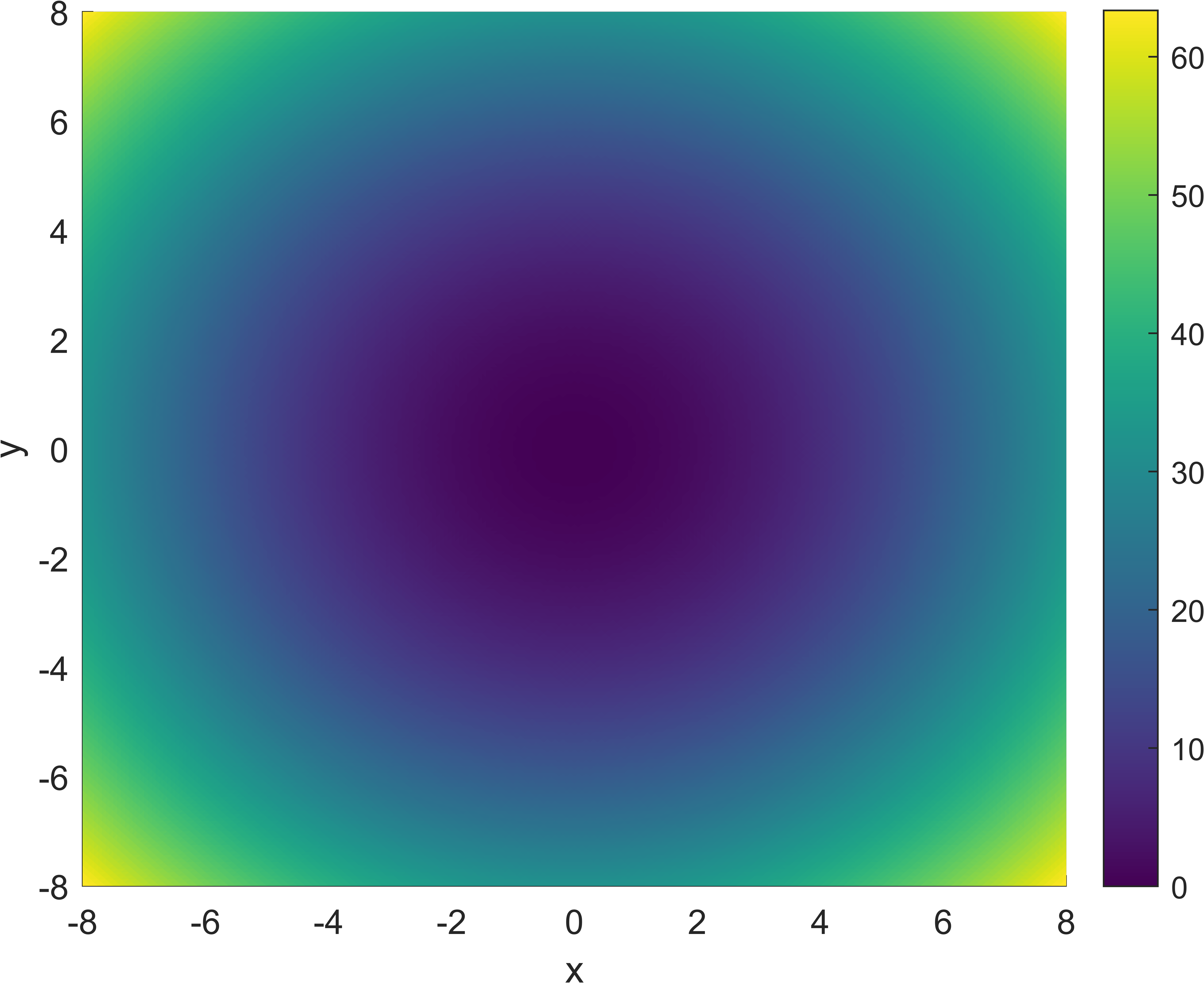}}
    \end{subfigure}
    \begin{subfigure}[t]{0.32\textwidth}
        \centering
        \scalebox{0.38}{\includegraphics{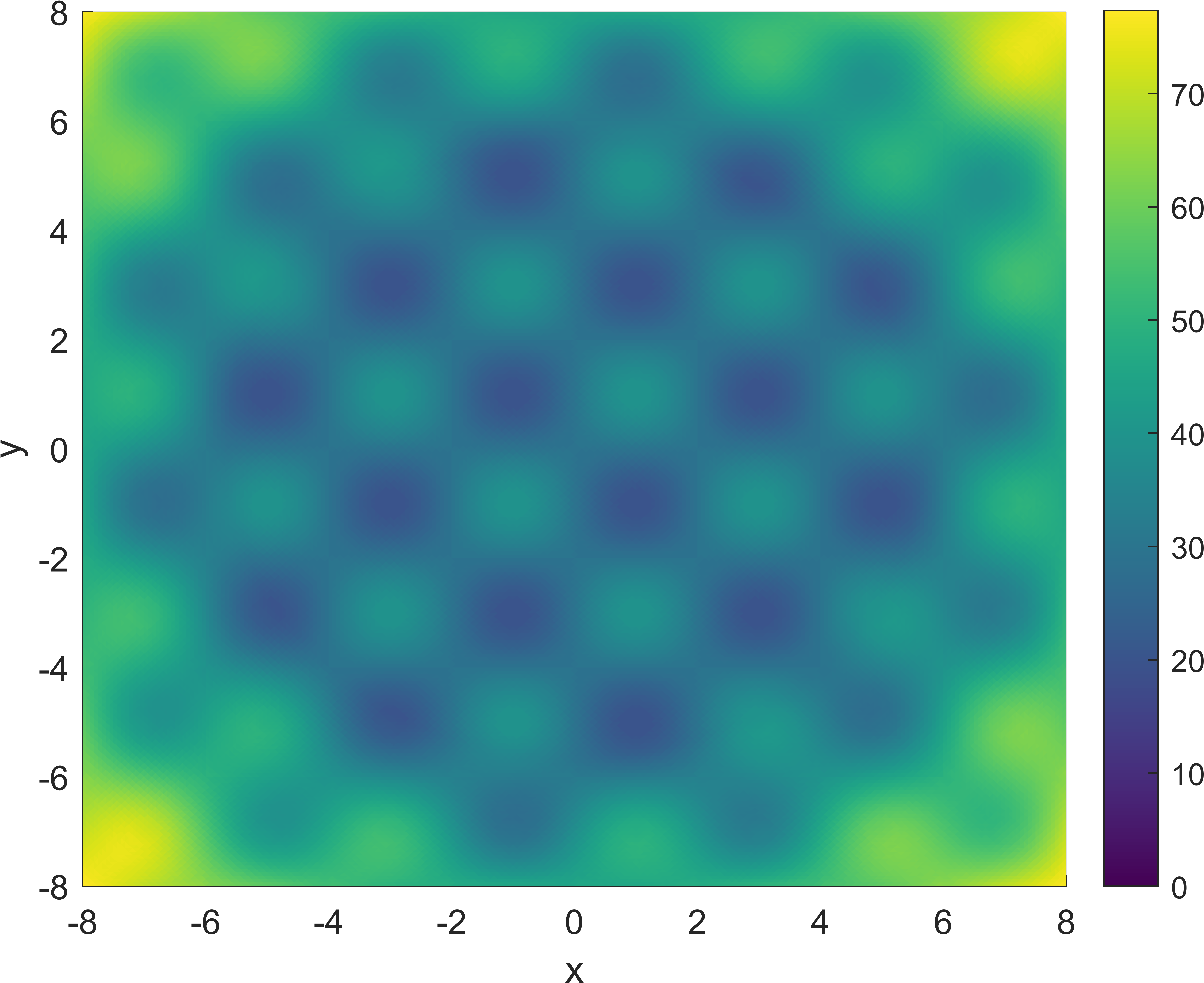}}
    \end{subfigure}
    \begin{subfigure}[t]{0.32\textwidth}
        \centering
        \scalebox{0.38}{\includegraphics{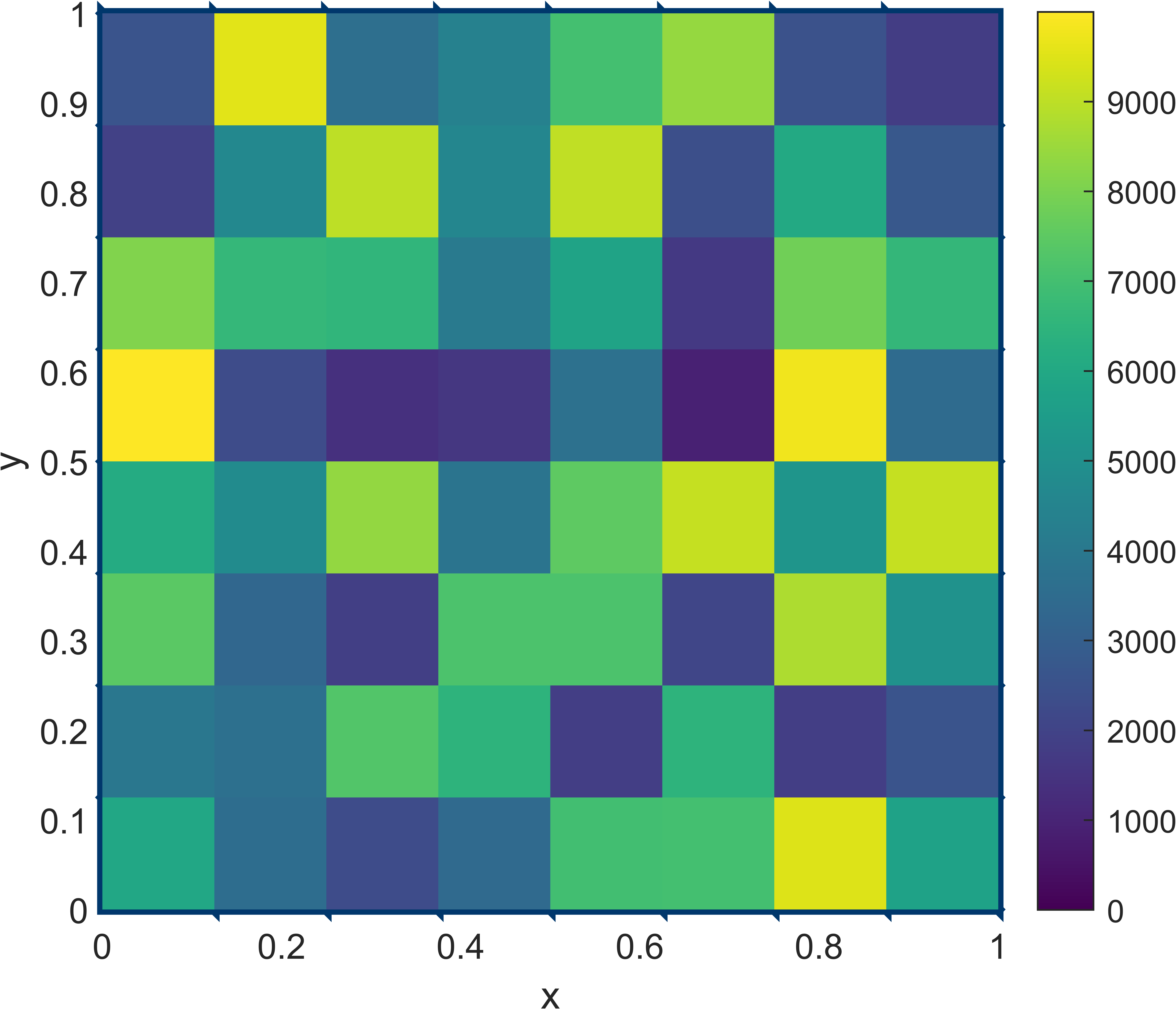}}
    \end{subfigure}

    \begin{subfigure}[t]{0.32\textwidth}
        \centering
        \scalebox{0.38}{\includegraphics{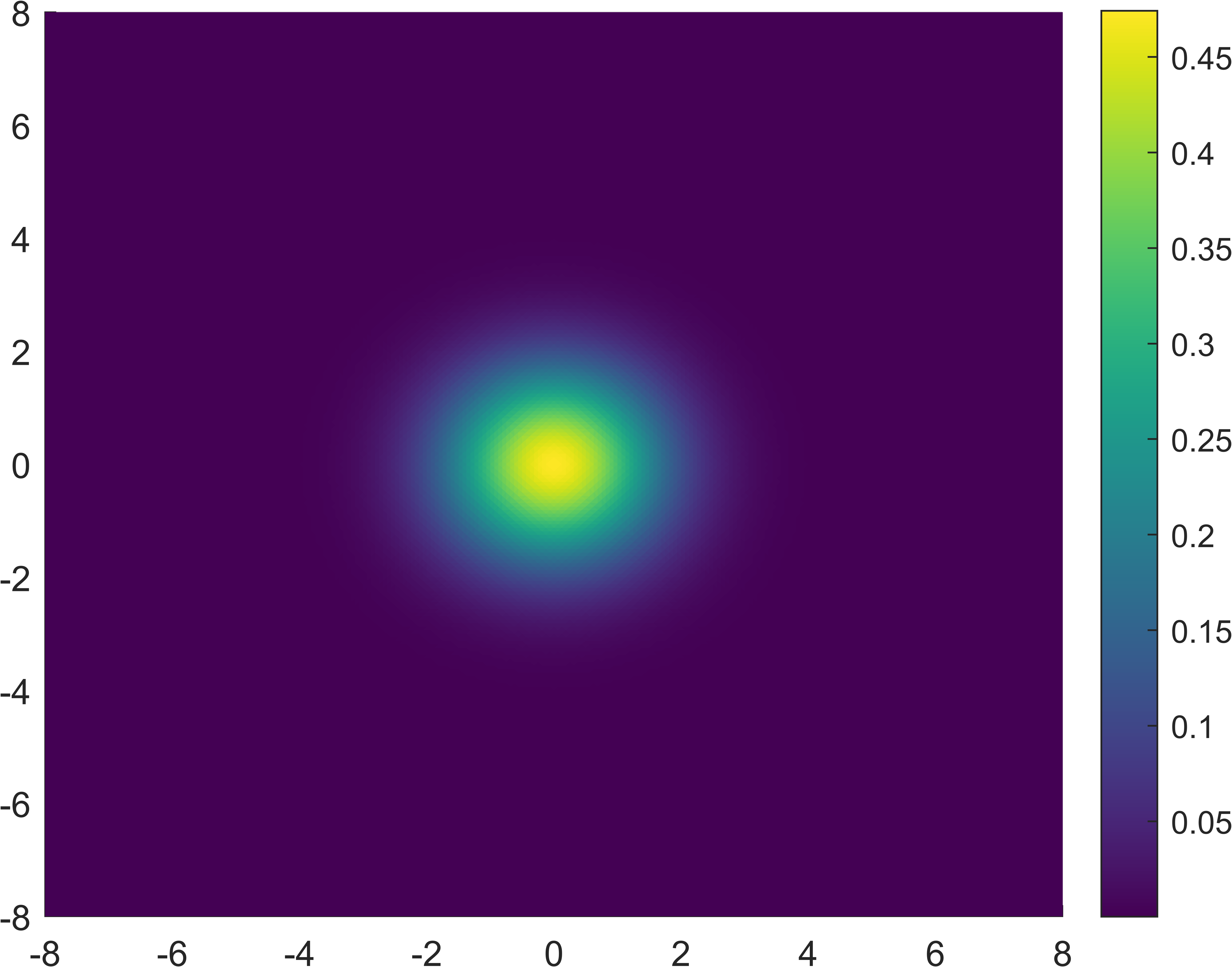}}
    \end{subfigure}
    \begin{subfigure}[t]{0.32\textwidth}
        \centering
        \scalebox{0.38}{\includegraphics{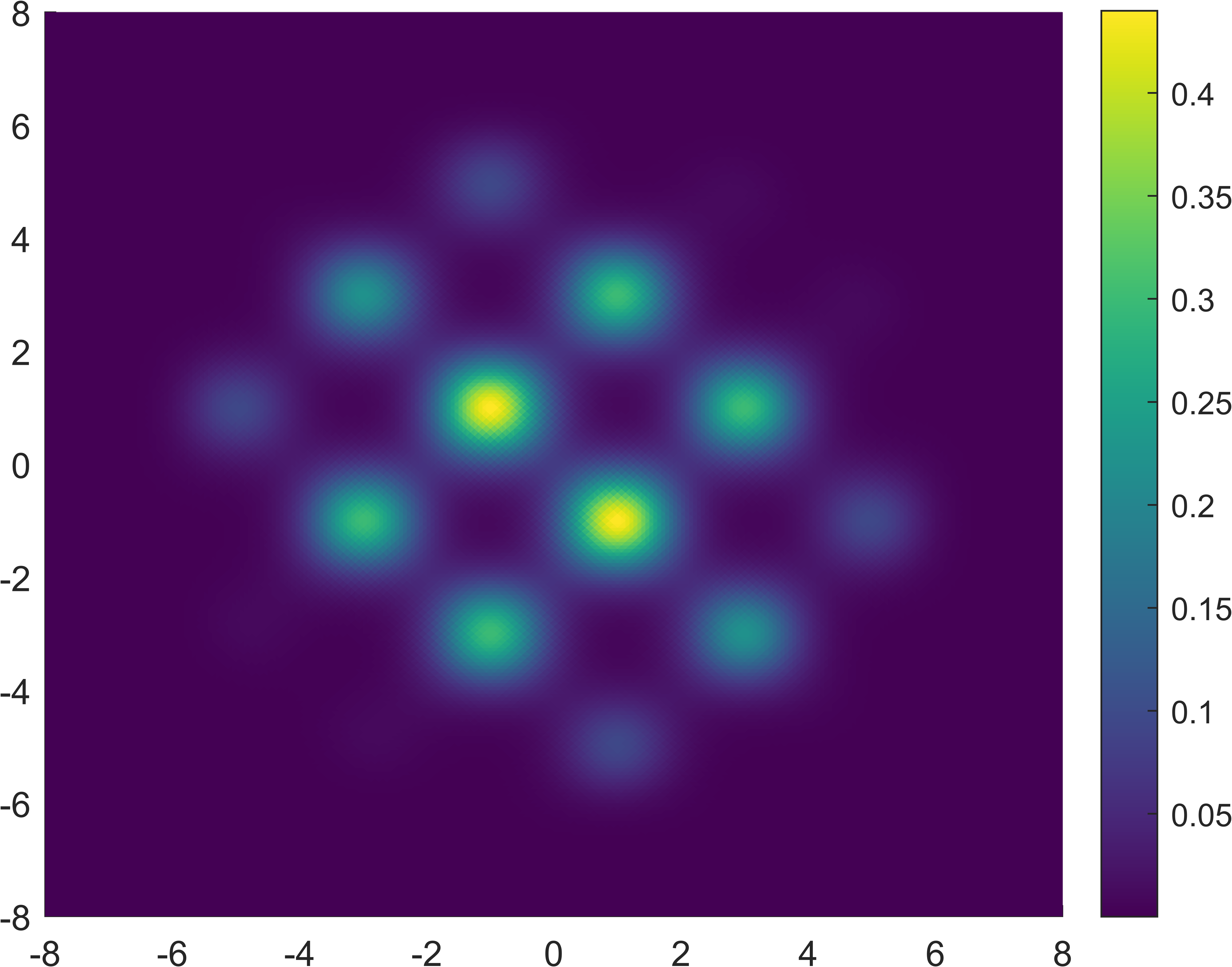}}
    \end{subfigure}
    \begin{subfigure}[t]{0.32\textwidth}
        \centering
        \scalebox{0.38}{\includegraphics{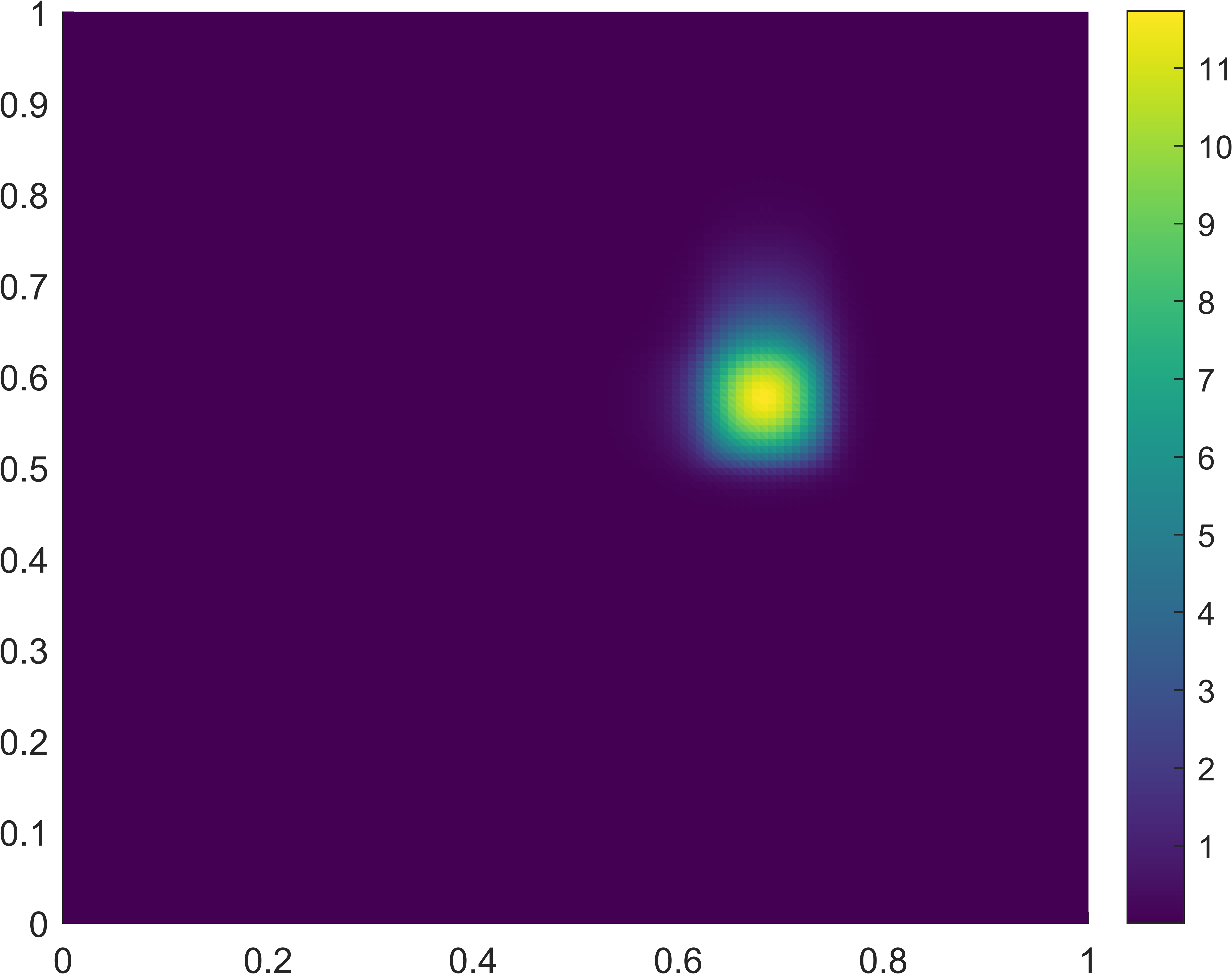}}
    \end{subfigure}
    
    \vspace{-0.1cm} 
    \caption{Projection onto piecewise constants of \(V_1,V_2,V_3\) (first row from left to right) and
    CR approximations of the corresponding ground states (second row).}
    \label{fig:groundStates}
\end{figure}
\FloatBarrier

%%% 6.4 Anderson localisation %%%
\subsection{Anderson localisation} \label{sec:7.4}
The highly disordered potential \(V_3\) first decomposes the unit square \(\Omega = (
0,1)^2\) into \(8\times8\) subsquares and second assigns to each of them a (uniformly
distributed) random value between \(0\) and \(10^4\) (by the MATLAB call \(\texttt{%
randi(}10^4\texttt{)}-1\)) displayed in Figure~\ref{fig:groundStates}. Uniform mesh-%
refinement leads to the optimal convergence rate one for GLB and GUB towards \(\lambda
_1=1.647289\times 10^3\) in Figure~\ref{fig:anderson}. The highly disordered potential
allows for exponential localisation of the ground state \cite{arnold,AP19} visible in
Figure~\ref{fig:groundStates}. A good approximation of the principal eigenpair \((
\lambda_1,u_1)\) requires a fine mesh for the support of \(u_1\). The latter is only
achieved for very fine uniform meshes as seen in Figure~\ref{fig:anderson} with optimal
convergence rate one for GLB and GUB but poor accuracy. Adaptive mesh-refinement significantly
improves the accuracy of the GUB and also \(\GLB_\es\) in Figure~\ref{fig:anderson}:
The accuracy of any adaptively computed guaranteed upper bound is improved by one order
of magnitude and so is \(\GLB_\es\). The post-processed GLB, however, remain almost
constantly close to zero in Figure~\ref{fig:anderson}. We value any post-processed
GLB under adaptive refinement as useless in this example with \(h_{\max} = 0.125\)
and \(\|V_3\|_\infty = 9996\). The adaptive mesh in Figure~\ref{fig:anderson_mesh}
depicts the local refinement in a small neighbourhood of the ground state's support
from Figure~\ref{fig:groundStates} in strong agreement with the improved accuracy of
\(\GLB_\es\) in Figure~\ref{fig:anderson}.

\vspace{-0.1cm} 
\begin{figure}[ht]
	\centering
	\scalebox{0.72}{\includegraphics{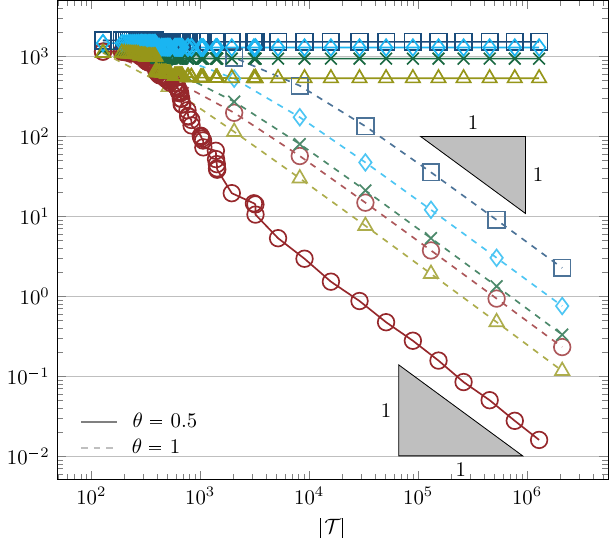}}
	\scalebox{0.72}{\includegraphics{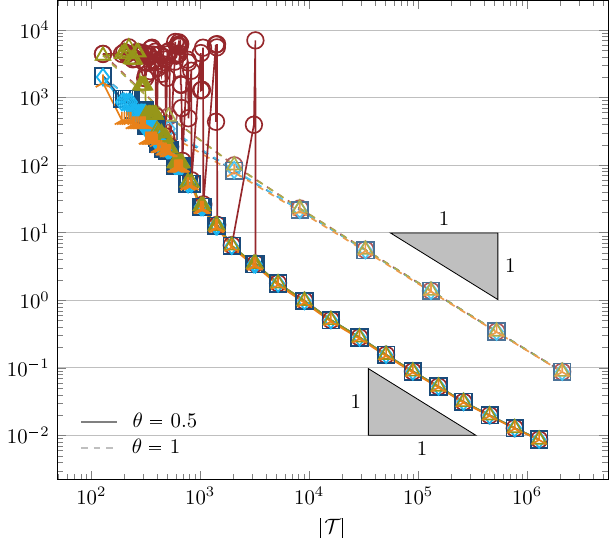}}
	\vspace{-0.3cm} 
	\caption{Convergence history plot for \(\lambda_1-\GLB(1)\) (left) and \(\GUB(1)-\lambda_1\) (right)
		on uniform (dashed) and adaptive (solid) triangulations of the unit square under the disordered
		potential \(V_3\) in Subsection \ref{sec:7.4}.}
	\label{fig:anderson}
\end{figure}

\vspace*{-0.5cm}
\begin{figure}[ht]
	\centering
	\scalebox{0.3}{\includegraphics{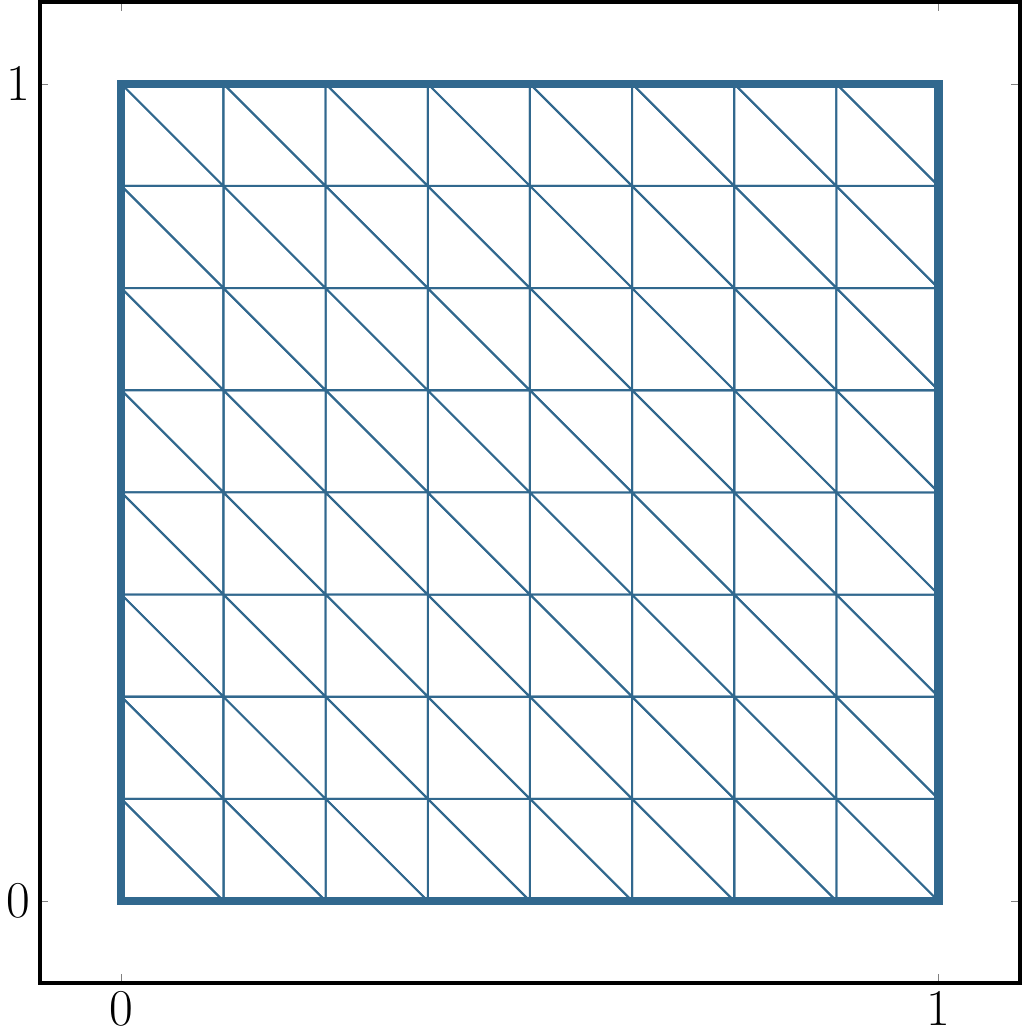}}
	\hspace{1cm}
	\scalebox{0.3}{\includegraphics{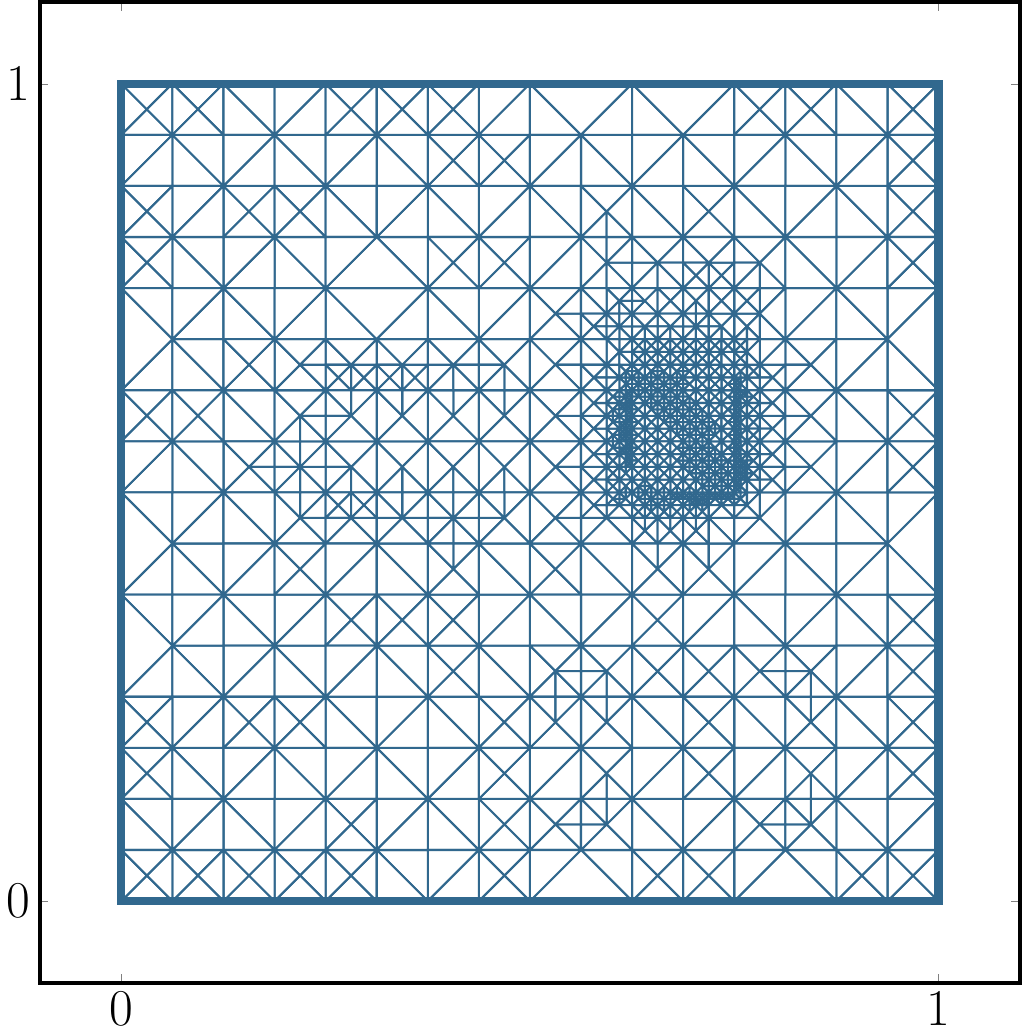}}
	\vspace{-0.2cm} 
	\caption{Initial (left) and adaptive (right) triangulations of the unit square \(\Omega = (0,1)^2\)
		in Subsection \ref{sec:7.4} with \(|\Tcal_0| = 128\) and \(|\Tcal_{70}| = 1939\).}
	\label{fig:anderson_mesh}
\end{figure}

%%% 6.5 Conclusions %%%
\subsection{Conclusions} \label{sec:7.5}
All numerical experiments provide guaranteed upper and lower eigenvalue bounds as
asserted in the theorems of Section~\ref{sec:3} and \ref{sec:6}. The novel extra-%
stabilised scheme is compatible with adaptive mesh-refinement and then superior
with (empirical) optimal convergence rates for the guaranteed bounds. It is surprising
that even the GUB converge with optimal rates on adaptive meshes in all numerical
examples. Except for only a few coarse meshes, the suggested post-processed GUB
of Section~\ref{sec:6} are of the same accuracy as the direct GUB from the Courant
EVP, that come at higher computational cost. The post-processed GLB of Section~\ref{sec:3}
are less efficient on uniform meshes than \(\GLB_\es\) and only \(\GLB_\mCR\) is
more competitive on uniform meshes. In adaptive mesh-refining the post-processed 
GLB are practically useless in the case of localisation of eigenfunctions even on
convex domains. Based on the numerical experiments in Section \ref{sec:7},
we recommend the novel adaptive sCR EVP for this class of Schr\"odinger EVP.

\paragraph{Acknowledgements.} The second author was supported from the Studienstiftung
des Deutschen Volkes and the Deutsche Forschungsgemeinschaft (DFG) under Germany's
Excellence Strategy -- The Berlin Mathematics Research Center MATH+ (EXC-2046/1,
project ID: 390685689).

\bibliography{bibliography}
\end{document}